\documentclass{article}

\usepackage[centertags]{amsmath}
\usepackage{amssymb}
\usepackage{mathptmx}
\usepackage{mathtools}
\usepackage[colorlinks=true,linkcolor=blue,citecolor=blue]{hyperref}
\usepackage{subfig}
\usepackage{amsthm}
\usepackage{adjustbox}
\usepackage{csquotes}
\usepackage{longtable}

%TO use biblatex, uncomment out this stuff
%\usepackage{biblatex}
%\addbibresource{bibliography.bib}

%TIKS PACKAGES
\usepackage{tikz}
\usetikzlibrary{decorations.pathmorphing}
%END
\usepackage{graphicx, xcolor, soul}

%TO DO NOTES PACKAGE
\usepackage[colorinlistoftodos,prependcaption,textsize=tiny]{todonotes}
%add the option "disable" to turn them off

\usepackage{url}
\usepackage[T1]{fontenc}

\usepackage{tikz}
\usetikzlibrary{calc}

%opening
\title{Survey: Sixty Years of Douglas--Rachford}
%\dedication{This work is dedicated to the memory of Jonathan M. Borwein our greatly missed friend, mentor, and colleague. His influence on both the topic at hand, as well as his impact on the present authors personally, cannot be overstated.}

\author{Scott B. Lindstrom\\CARMA\\University of Newcastle \and Brailey Sims\\CARMA\\University of Newcastle}

%\institute{
%	Scott B. Lindstrom \and Brailey Sims\\
%	\email{scott.lindstrom@uon.edu.au} \\
%	\textsc{orcid}: \href{https://orcid.org/0000-0003-4287-4788}{0000-0003-4287-4788} \\
%	\email{brailey.sims@newcastle.edu.au} \\
%	\textsc{orcid}: \href{https://orcid.org/0000-0003-2162-9097}{0000-0003-2162-9097} \\
%}

\date{\today}

\def\Id{\hbox{\rm Id}}

\newcommand{\argmin}{\ensuremath{\operatorname{argmin}}}
\newcommand{\Fix}{\ensuremath{\operatorname{Fix}}}
\newcommand{\A}{\ensuremath{\mathbb A}}
\newcommand{\B}{\ensuremath{\mathbb B}}

\newtheorem{theorem}{Theorem}[section]

\newtheorem{proposition}[theorem]{Proposition}
\theoremstyle{definition}
\newtheorem{definition}[theorem]{Definition}

\newtheorem{remark}[theorem]{Remark}

\begin{document}
	
	\maketitle
	
	\subsubsection*{Dedication}

This work is dedicated to the memory of Jonathan M. Borwein our greatly missed friend, mentor, and colleague. His influence on both the topic at hand, as well as his impact on the present authors personally, cannot be overstated.

\begin{abstract}
	The Douglas--Rachford method is a splitting method frequently employed for finding zeroes of sums of maximally monotone operators. When the operators in question are normal cones operators, the iterated process may be used to solve \emph{feasibility} problems of the form: $\text{Find}\quad x \in \bigcap_{k=1}^N S_k.$ The success of the method in the context of closed, convex, nonempty sets $S_1,\dots,S_N$ is well-known and understood from a theoretical standpoint. However, its performance in the nonconvex context is less understood yet surprisingly impressive. This was particularly compelling to Jonathan M. Borwein who, intrigued by Elser, Rankenburg, and Thibault's success in applying the method for solving Sudoku Puzzles %\cite{ERT07},%
	began an investigation of his own. We survey the current body of literature on the subject, and we summarize its history. We especially commemorate Professor Borwein's celebrated contributions to the area.
	%\keywords{Douglas-Rachford \and Feasibility \and Projection Algorithms \and Iterative Methods \and Discrete Dynamical Systems}
	%\subclass{47H99 \and 49M30 \and 65Q30 \and 90C26}
\end{abstract}

\section{Introduction}\label{sec:intro}
In 1996  Heinz Bauschke and Jonathan Borwein broadly classified the commonly applied projection algorithms for solving convex feasibility problems as falling into four categories. These were: best approximation theory, discrete models for image reconstruction, continuous models for image reconstruction, and subgradient algorithms \cite{BB96}. One such celebrated iterative process has been known by many names in many contexts and is possibly best known as the \emph{Douglas--Rachford} method (DR). 

DR is frequently used for the more general problem of finding a zero of the sum of maximally monotone operators, which itself is a generalization of the problem of minimizing a sum of convex functions. Many volumes could be written on monotone operator theory, convex optimization, and splitting algorithms specifically, the definitive work being that of Bauschke and Combettes \cite{BC}; the story of DR is inextricably entwined with each of these.

More recently, the method has become famous for its surprising success in solving nonconvex feasibility problems, notwithstanding the lack of theoretical justification. The recent investigation of these methods in the nonconvex setting has been both motivated by and advanced through experimental application of the algorithms to nonconvex problems in a variety of different settings. In many cases impressive performance has been observed despite having previously been thought of as ill-adapted to projection algorithms.

The task of choosing what to include in a condensed survey of DR is thus necessarily difficult. We therefore choose to adopt an approach which balances reasonable brevity with the goal that a reader unfamiliar with DR should be able to at least glean the following: the basic history of the method, an understanding of the various motivating contexts in which it has been ``discovered,'' an appreciation for the diversity of problems to which it is applied, and a sense of which research topics are currently being explored.

\subsection{Outline}
This paper is divided into four sections: \vspace{0.15cm}

\begin{longtable}{l p{9.5cm}}
	\textbf{Section \ref{sec:intro}} & In \ref{preliminaries}, we provide preliminaries on Douglas--Rachford and feasibility. In \ref{History}, we briefly motivate its history and explain how feasibility problems are a special case of finding a zero for a sum of maximal monotone operators, and in \ref{operator_sums} we explore its use for finding zeros of maximal monotone operator sums, including its connection with ADMM in \ref{ADMM}. In \ref{subsec:Nsets}, we analyse the ways in which it has been extended from $2$ set feasibility problems to $N$ set feasibility problems.\\
	& \\
	\textbf{Section \ref{convex_setting}} & We consider the role of DR in solving convex feasibility problems. In \ref{convex_convergence} we catalogue some of the convergence results, and in \ref{convex_applications} we mention some of its better known applications.\\
	& \\
	\textbf{Section \ref{nonconvex}} & We consider the context of nonconvex feasibility. We first consider discrete problems in \ref{nonconvex_discrete} and go on to discuss hypersurface problems in \ref{nonconvex_hypersurfaces}. In \ref{transversal_section}, we explore some of the possibly nonconvex convergence results which employ notions of regularity and transversality. In \ref{nonconvex_minimization} we describe some of the recent work applying DR for nonconvex minimization problems.\\
	& \\
	\textbf{Section \ref{summary}} & Finally we mention two open problems and summarize the current state of research in the field.\\
	\textbf{Appendix \ref{appendix:ADMM}} & This appendix provides a more detailed summary of Gabay's exposition on the connection between DR and ADMM.
\end{longtable}

\subsection{Preliminaries}\label{preliminaries}

The method of alternating projections (AP) and the Douglas--Rachford method (DR) are frequently used to find a \emph{feasible point} (point in the intersection) of two closed constraint sets $A$ and $B$ in a Hilbert space $H$. The \emph{feasibility problem} is
\begin{equation}\label{eqn:feasibility_problem}
\text{Find} \quad x \in A \cap B.
\end{equation}

The projection onto a subset $C$ of $H$ is defined for all $x \in H$ by $$\mathbb{P}_C(x) := \left \{ z \in C : \|x - z\| = \inf_{z' \in C}\|x - z'\|\right \}.$$

%\begin{emphasisboxed}
Note that $\mathbb{P}_C$ is, generically, a set-valued map where values may be empty or contain more than one point. In the cases of interest to us $\mathbb{P}_C$ has nonempty values (indeed throughout $\mathbb{P}_C$ is nonempty and so $C$ is said to be \emph{proximal}), and in order to simplify both notation and implementation, we will work with a selector for $\mathbb{P}_C$, that is a map $P_C:H \rightarrow C: x \mapsto P_C(x) \in \mathbb{P}_C(x)$, so $P_C^2=P_C$.
%\end{emphasisboxed}

When $C$ is nonempty, closed, and convex the projection operator $P_C$ is uniquely determined by the variational inequality
$$
\left(x - P_C(x), c - P_C(x)\right) \leq 0, \quad \textrm{for all\ } c \in C,
$$
and is a \emph{firmly nonexpansive}mapping; that is for all $ x,y\in H$
\begin{equation*}
\|P_C x - P_C y \|^2 + \|(I-P_C)x-(I-P_C)y\| \leq \|x-y\|^2.
\end{equation*}
See, for example, \cite[Chapter 4]{BC}. When $C$ is a closed subspace it is also a self-adjoint linear operator \cite[Corollary 3.22]{BC}.

The reflection mapping through the set $C$ is defined by
$$R_C := 2P_C - I,$$
where $I$ is the identity map.

\begin{definition}[\emph{Method of Alternating Projections}]
	For two closed sets $A$ and $B$ and an initial point $x_0 \in H$, the method of alternating projections (AP) generates a sequence $(x_n)_{n=1}^\infty$ as follows:
	\begin{equation}\label{def:AP}
	x_{n+1} := P_{B}P_{A}x_n.
	\end{equation}	
\end{definition}

\begin{definition}[\emph{Douglas--Rachford Method}]\label{def:DRMethod}
	For two closed sets $A$ and $B$ and an initial point $x_0 \in H$, the Douglas--Rachford method (DR) generates a sequence $(x_n)_{n=1}^\infty$ as follows:
	\begin{equation}\label{def:DR}
	x_{n+1} \in T_{A,B}(x_n) \quad \text{where} \quad T_{A,B} := \frac{1}{2}\left( I + R_{B}R_{A}\right).
	\end{equation}	
\end{definition}
DR is often referred to as \emph{reflect-reflect-average}. Both DR and AP are special cases of averaged relaxed projection methods. We denote a \emph{relaxed projection} by	
\begin{equation}\label{eq:defRS}
R_C^\gamma(x):=(2-\gamma)(P_C -\Id)+\Id,
\end{equation}
for a fixed \emph{reflection parameter} $\gamma \in [0,2)$. Observe that when $\gamma = 0$, the operator $R_C^{\gamma=0} = 2 P_C -\Id$ is the standard \emph{reflection} employed by DR, and for $\gamma=1$ we obtain the \emph{projection}, $R_C^\gamma = R_C^1 =  P_C$. For $\gamma\in (1,2)$ the operator $R_C^\gamma$ can be called an \emph{under-relaxed projection} following \cite{DePierro}. Here we are using the terminology in \eqref{eq:defRS}. However, the reader is cautioned that in some articles, $R_C^\gamma$ is written as $P_C^\gamma$, while in others the role of $\gamma$ is reversed so that $\gamma=2$ corresponds to a reflection and $\gamma=0$ is the identity: $\gamma(P_C-\Id)+\Id$.

In addition to using  relaxed projections as in  \eqref{eq:defRS}, the averaging step of the Douglas--Rachford iteration can also be relaxed by choosing an arbitrary point on the interval between the second reflection and the initial iterate. This can be parametrised by some $\lambda \in (0,1]$. Accordingly we define a $\lambda$-averaged relaxed sequence $\{x_n\}$ by,
\begin{equation}\label{DRsequence}
x_n := \left(T_{A^\gamma,B^\mu}^\lambda \right)^n x_0:= \left(\lambda(R_B^\mu \circ R_A^\gamma)+(1-\lambda)\Id \right)^n x_0.
\end{equation}
When $\lambda = \gamma = \mu = 1$, this is the sequence generated by alternating projections \eqref{def:AP}, and for $\lambda = 1/2$ and $\gamma = \mu = 0$, this is the standard Douglas--Rachford sequence \eqref{def:DR}. For $\gamma = \mu = 0$ and $\lambda=1$, this is the \emph{Peaceman-Rachford} sequence \cite{peaceman1955numerical} (see also Lions \& Mercier \cite[Algorithm 1]{LM}).

We note that the framework introduced here does not cover all possible projection methods. For example, one may want to vary the parameters $\gamma$, $\mu$ and $\lambda$ on every step, or consider other variations of Douglas--Rachford operators (see \cite{FranNewMethod} for example). Single steps of the AP and DR methods are illustrated in
%%%%%%%%%%%%%%%%%%%%%%%%
Figure~\ref{fig:APandDR}, which originally appeared in \cite{DLR18}.

\begin{figure}[ht]
	\subfloat[One step of alternating projections]{
		\begin{tikzpicture}[scale=2.6]
		%PLOT SMOOTHLY
		%\draw plot [smooth] coordinates {(1.5,0) (.5,.5) (1,1)};
		%SHADE THE SET A
		\fill[cyan,fill opacity=0.4] (1-.25,1-.1) to (2.0-.25,0-.1)
		to [out=180,in=270] (.465-.25,.415-.1)
		to [out=90,in=205] cycle;
		\node at (1.1-.25,.2-.1) {$A$};
		
		%%SHADE THE SET B INTERSECT A
		%\fill[purple,fill opacity=0.4] (1.42,.3) to (1.2,.3) to (1.3,.5) to cycle;
		
		%SHADE THE SET B
		\fill[red,fill opacity=0.4] (1.42-.2,.3+.2) to (1.2-.2,.3+.2) to (1.3-.2,.5+.2) to (1.5-.2,.9+.2) to [out=60,in=135] (1.9-.2,.9+.2) to [out=315,in=0] (1.5-.2,.3+.2) to cycle;
		\node [] at (1.6,.9) {$B$}; 					
		
		%shifted by (1,.5)-(1.2,.3) = (-.2,.2)
		
		%OUTLINE THE SET
		%\draw[black] (1,1) to (1.5,0)
		%to [out=180,in=270] (.5,.5)
		%to [out=90,in=205] cycle;
		
		%DRAW H_A which has slope 2
		\draw [gray, dashed] (.625,1.25) -- (0,0);
		\node [above right] at (.625,1.25) {$H_A$};
		
		%DRAW x, PSx, and RSx
		\draw [fill,black] (-.25,.75) circle [radius=0.015];	
		\node [left] at (-.25,.75) {$x$};
		\draw [black,->] (-.2,.725) -- (.2,.525);
		\draw [fill,black] (.25,.5) circle [radius=0.015];
		\node [left] at (.25,.45) {$P_Ax$};

		%DRAW RBx
		\draw [black,->] (.325,.5) -- (.925,.5);
		\draw [fill,black] (1,.5) circle [radius=0.015];
		\node [below left] at (1,.5) {$P_B P_A x$};
		
		%DRAW H_B which has slope infty
		\draw [gray, dashed] (1,1.25) -- (1,0);
		\node [right] at (1,1.25) {$H_B$};
		
		\end{tikzpicture}}
	\subfloat[One step of Douglas--Rachford method]{
		\begin{tikzpicture}[scale=2.6]
		%PLOT SMOOTHLY
		%\draw plot [smooth] coordinates {(1.5,0) (.5,.5) (1,1)};
		%SHADE THE SET A
		\fill[cyan,fill opacity=0.4] (1-.25,1-.1) to (2.0-.25,0-.1)
		to [out=180,in=270] (.465-.25,.415-.1)
		to [out=90,in=205] cycle;
		\node at (1.1-.25,.2-.1) {$A$};
		
		%%SHADE THE SET B INTERSECT A
		%\fill[purple,fill opacity=0.4] (1.42,.3) to (1.2,.3) to (1.3,.5) to cycle;
		
		%SHADE THE SET B
		\fill[red,fill opacity=0.4] (1.42-.2,.3+.2) to (1.2-.2,.3+.2) to (1.3-.2,.5+.2) to (1.5-.2,.9+.2) to [out=60,in=135] (1.9-.2,.9+.2) to [out=315,in=0] (1.5-.2,.3+.2) to cycle;
		\node [] at (1.65,1) {$B$}; 					
		
		%shifted by (1,.5)-(1.2,.3) = (-.2,.2)
		
		%OUTLINE THE SET
		%\draw[black] (1,1) to (1.5,0)
		%to [out=180,in=270] (.5,.5)
		%to [out=90,in=205] cycle;
		
		%DRAW H_A which has slope 2
		\draw [gray, dashed] (.625,1.25) -- (0,0);
		\node [above right] at (.625,1.25) {$H_A$};
		
		%DRAW x, PSx, and RSx
		\draw [fill,black] (-.25,.75) circle [radius=0.015];	
		\node [left] at (-.25,.75) {$x$};
		\draw [black,->] (-.2,.725) -- (.2,.525);
		\draw [fill,black] (.25,.5) circle [radius=0.015];
		\node [left] at (.25,.45) {$P_Ax$};
		\draw [blue,->] (.3,.475) -- (.7,.275);
		\draw [fill,blue] (.75,.25) circle [radius=0.015];	
		\node [left, blue] at (.75,.2) {$R_A x$};
		
		%DRAW RBx
		\draw [blue,->] (1.05,.55) -- (1.2,.7);
		\draw [fill,blue] (1.25,.75) circle [radius=0.015];
		\node [above,blue] at (1.25,.75) {$R_B R_A x$};
		\draw [black,->] (.8,.3) -- (.95,.45);
		\draw [fill,black] (1,.5) circle [radius=0.015];
		\draw [blue,->] (1.05,.55) -- (1.2,.7);
		\node [right] at (1,.5) {$P_B R_A x$};
		
		%DRAW H_B which has slope 1
		\draw [gray, dashed] (.25,1.25) -- (1.6,-.1);
		\node [above left] at (.25,1.25) {$H_B$};
		
		%DRAW my averaging step
		\draw [purple, <->] (-.2,.75) -- (1.2,.75);
		\draw [fill,purple] (.5,.75) circle [radius=0.015];
		\node [below,purple] at (.5,.75) {$T_{A,B}x$};
		
		\end{tikzpicture}\label{fig:APandDRright}}
	\caption{The operator $T_{A,B}$.}\label{fig:APandDR}
\end{figure}
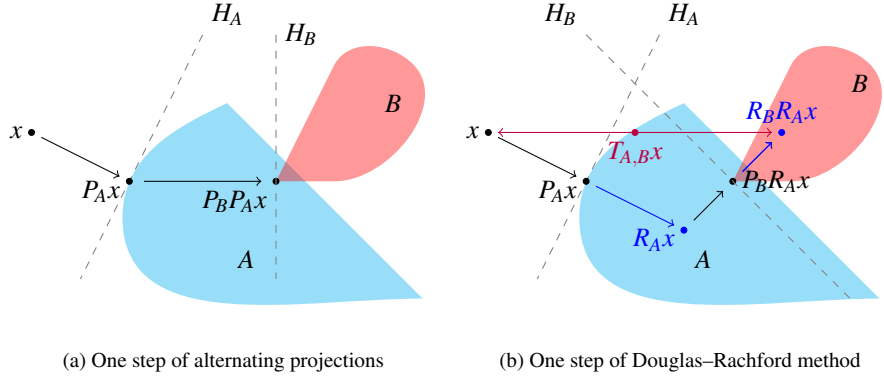
%%%%%%%%%%%%%%%%%%%%%%%%%%%%%%%%%%%%%%%%%%%%%%%%%%%%%%%%%%%%%%%%%%%%%%

\begin{definition}
	The fixed point set for a mapping $T:H\rightarrow H$ is $\Fix T = \{x \in H \;|\; Tx = x \}$ (in the case when $T$ is set-valued $\Fix T = \{x \in H \;|\; x \in Tx \}$.
\end{definition}

\subsection{History}\label{History}%{Here we introduce Lions and Mercier and the operator sum context. All of that operator sum background goes in here}

Projection methods date at least as far back as 1933 when J. von Neumann considered the method of alternating projections when $A$ and $B$ are affine subsets of $H$ establishing its norm convergence to $P_{A\cap B}(x_0)$ \cite{JVN}. In 1965 Bregman showed that in the more general setting where $A$ and $B$ are closed convex sets AP converges weakly to a point in $A\cap B$ \cite{bregman1965method}(see also \cite{BB96}). In 2002 Hundal \cite{Hundal} provided an example in infinite dimensions of a hyperplane and closed cone for which AP fails to converge in norm. However the cone constructed by Hundal is somewhat unnatural. In \cite{BST2015} Borwein, Sims, and Tam  explored the possibility of norm convergence for sets occurring more naturally in applications.

Sixty years ago the Douglas--Rachford method was introduced, somewhat indirectly, in connection with nonlinear heat flow problems \cite{DR}; see \cite{Moursi} for a thorough treatment of the connection with the form we recognize today. The definitive statement of the weak convergence result was given by Lions and Mercier in the more general setting of maximal monotone operators \cite{LM}. We will first state the problem and result, and then explain the connection. The problem is
\begin{equation}\label{eqn:operator_sum_problem}
\text{Find}\quad x \quad \text{such that}\quad 0 \in (\A+\B)x.
\end{equation}
Let the resolvent for a set-valued mapping $F$ be defined by $J_F^\lambda := (\Id+\lambda F)^{-1}$ with $\lambda>0$. The classical result is as follows.
\begin{theorem}[Lions \& Mercier \cite{LM}]\label{thm:LionsandMercier} Assume that
	$\A,\B$ are maximal monotone operators with $\A + \B$ also maximal monotone, then for
	\begin{equation}\label{LM}
	T_{\A,\B}:X\rightarrow X :x\mapsto J_\B^\lambda(2J_\A^\lambda-I)x+(I-J_\A^\lambda)x
	\end{equation}
	the sequence given by $x_{n+1}=T_{\A,\B}x_n$ converges weakly to some $v \in H$ as $n\rightarrow \infty$ such that $J_\A^\lambda v$ is a zero of $\A + \B$.
\end{theorem}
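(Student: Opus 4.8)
The plan is to exhibit $T_{\A,\B}$ as a firmly nonexpansive self-map of $H$, to identify its fixed points with the lifts $v=x+\lambda u$ of zeros $x$ of $\A+\B$, and then to quote the classical weak convergence theorem for iterates of averaged nonexpansive mappings with a fixed point. Throughout I fix $\lambda>0$ and abbreviate $J_\A:=J_\A^\lambda$, $R_\A:=2J_\A-I$, and similarly $J_\B,R_\B$.

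First I would record the two standard resolvent facts. Since $\A$ is maximal monotone, Minty's theorem makes $J_\A=(I+\lambda\A)^{-1}$ single-valued with domain all of $H$; and from $x-J_\A x\in\lambda\A(J_\A x)$, $y-J_\A y\in\lambda\A(J_\A y)$ and monotonicity of $\A$ one gets $\langle x-y,J_\A x-J_\A y\rangle\ge\|J_\A x-J_\A y\|^2$, so $J_\A$ is firmly nonexpansive. Expanding $\|R_\A x-R_\A y\|^2=\|2(J_\A x-J_\A y)-(x-y)\|^2$ and inserting this inequality shows $R_\A$, and likewise $R_\B$, is nonexpansive. Hence $R_\B R_\A$ is nonexpansive and $T_{\A,\B}=\frac12(I+R_\B R_\A)$ is firmly nonexpansive; in particular it is $\frac12$-averaged and $\Fix T_{\A,\B}$ is closed and convex.

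Next I would prove the fixed-point characterization. Put $x:=J_\A v$. The equation $v=J_\B(2J_\A-I)v+(I-J_\A)v$ is equivalent to $J_\A v=J_\B(2J_\A v-v)$, i.e.\ $x=J_\B(2x-v)$. Unwinding the resolvent inclusions gives $v-x\in\lambda\A x$ and $x-v\in\lambda\B x$, and adding yields $0\in\lambda(\A+\B)x$, hence $0\in(\A+\B)x$. Conversely, if $0\in(\A+\B)x$ choose $u\in\A x$ with $-u\in\B x$ and set $v:=x+\lambda u$; then $x=J_\A v$ and $x=J_\B(2x-v)$, so $v\in\Fix T_{\A,\B}$ and $J_\A v=x$. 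Thus $\Fix T_{\A,\B}\ne\emptyset$ precisely when $\A+\B$ has a zero, which the stated conclusion presupposes; in fact the convergence argument below uses only this nonemptiness, not maximal monotonicity of the sum.

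Finally I would appeal to the demiclosedness principle together with Opial's lemma. For any $p\in\Fix T_{\A,\B}$, firm nonexpansiveness gives $\|x_{n+1}-p\|^2\le\|x_n-p\|^2-\|x_n-x_{n+1}\|^2$, so $(x_n)$ is Fej\'er monotone with respect to $\Fix T_{\A,\B}$ and $\|x_n-x_{n+1}\|=\|x_n-T_{\A,\B}x_n\|\to 0$. Since $I-T_{\A,\B}$ is demiclosed at $0$ (a consequence of nonexpansiveness), every weak subsequential limit of $(x_n)$ lies in $\Fix T_{\A,\B}$; Opial's lemma then promotes Fej\'er monotonicity plus this to weak convergence of the whole sequence to a single $v\in\Fix T_{\A,\B}$, whence $J_\A^\lambda v$ is a zero of $\A+\B$ by the characterization. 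I expect the last step to be the main obstacle: passing from weak subsequential convergence to fixed points to weak convergence of the entire iteration, which is exactly where Opial's lemma (uniqueness of the weak cluster point under Fej\'er monotonicity) and the demiclosedness of $I-T_{\A,\B}$ do the real work; the rest is routine manipulation of resolvent identities.
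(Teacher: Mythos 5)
Your proof is correct and follows exactly the route the paper points to for this classical result: the survey gives no proof of its own, citing Lions--Mercier and remarking only that the argument ``relies on the firm nonexpansivity of $J_\A^\lambda$ and $J_\B^\lambda$,'' which is precisely the averaged-operator/fixed-point-characterization/Fej\'er-plus-demiclosedness argument you carry out. Your further observation that the convergence step needs only ${\rm zer}(\A+\B)\neq\emptyset$ (implicit in the statement) rather than maximal monotonicity of the sum is consistent with the paper's remark that this hypothesis was later relaxed by Svaiter.
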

The \emph{normal cone} to a set $C$ at $x\in C$ is $N_C(x) = \{y\in H : (y,c - x)\leq 0\ \textrm{for all\ } c\in C\}$. The normal cone operator associated to $C$ is
\begin{equation}\label{normalconeoperator}
N_C:H\rightarrow H:x \mapsto \left\{\begin{array}{lr} N_C(x),& \textrm{when\ } x\in C \\ \emptyset, &\textrm{when\ } x\notin C. \end{array}\right.
\end{equation}
See, for example, \cite[Definition 6.37]{BC}. 
One may think of the feasibility problem \eqref{eqn:feasibility_problem} as a special case of the optimization problem
\begin{equation}\label{eqn:minimization_problem}
\text{Find}\quad x \in \argmin \left\{ \iota_A + \iota_B \right\}
\end{equation}
where the indicator function $\iota_C$ for a set $C$ is defined by
\begin{equation}
\iota_C : H \rightarrow \mathbb{R}^\infty \quad \text{by} \quad \iota_C: x\mapsto \begin{cases}
0 & \text{if} \; x \in C\\
\infty & \text{otherwise}
\end{cases}.
\end{equation}
Whenever $A$ and $B$ are closed and convex, $\iota_A$ and $\iota_B$ are lower semicontinuous and convex, and their subdifferential operators $\partial \iota_A = N_A$ and $\partial \iota_B = N_B$ are maximal monotone. In this case, under satisfactory constraint qualifications on $A,B$ to guarantee the sum rule for subdifferentials $\partial (\iota_A +\iota_B) = \partial \iota_A + \partial \iota_B$ (see \cite[Corollary 16.38]{BC}), the problem \eqref{eqn:minimization_problem} reduces to
\begin{equation}
\text{Find} \quad x \quad \text{such that} \quad 0 \in \left(\partial \iota_A + \partial \iota_B\right)(x)= \left(N_A + N_B \right)(x)
\end{equation}
which we recognize as \eqref{eqn:operator_sum_problem}. Seen through this lens, two set convex feasibility is a special case of an extremely common problem in convex optimization: that of minimizing a sum of two convex functions $f+g$ where $\mathbb{A}= \partial f$ and $\mathbb{B}= \partial g$.
This illuminates its close relationship to many other proximal iteration methods, including the various augmented Lagrangian techniques with which it is often studied in tandem (see subsection~\ref{ADMM}).

Where $\A = N_A$ and $\B = N_B$ are the normal cone operators for closed convex sets $A$ and $B$, then the resolvents $J_\A^\lambda,J_\B^\lambda$ are the projection operators $P_A,P_B$ respectively, $T_{\A,\B} = \frac{1}{2}R_B R_A + \frac{1}{2}\Id$ is what we recognize as the operator of the usual Douglas--Rachford method\footnote{An operator $T:D\rightarrow H$ with $D \neq \emptyset$ satisfies $T=J_A$ where $A:=T^{-1}-\Id$. Moreover, $T$ is  firmly nonexpansive if and only if $A$ is monotone, and $T$ is firmly nonexpansive with full domain if and only if $A$ is maximally monotone. See \cite[Proposition 23.7]{BC} for details.}, and $J_\A^\lambda v = P_A v \in A \cap B$ is a solution for the feasibility problem \eqref{eqn:feasibility_problem}.
For details, see, for example, \cite[Example 23.4]{BC}.

Rockafellar \cite{rockafellar1974conjugate} and Brezis \cite{Brezis1971} (as cited in \cite{attouch1979maximality}) showed that the condition ${\rm dom}\A \cap {\rm int}{\rm dom}\B \ne \emptyset$ is sufficient to ensure that $\A$ and $\B$ maximal monotone implies that $\A + \B$ is also maximal monotone. In 1979, Hedy Attouch showed that the weaker condition $0 \in {\rm int}({\rm dom}\A - {\rm dom}\B)$ is sufficient \cite{attouch1979maximality}.

However, Attouch's condition may not be satisfied if $\A = N_A$ and $\B=N_B$ where $A$ and $B$ meet at a single point, since ${\rm dom}N_A = A$ and ${\rm dom}N_B = B$. In the following theorem, Bauschke, Combettes, and Luke \cite{BCL} showed that in the case of the feasibility problem \eqref{eqn:feasibility_problem} the requirement $\A + \B$ be maximal monotone may be relaxed.
\begin{theorem}[{\cite[Fact 5.9]{BCL}}]\label{thm:BCL}
	Suppose $A,B \subseteq H$ are closed and convex with non-empty intersection. Given $x_0 \in H$ the sequence of iterates defined by $x_{n+1}:=T_{A,B}x_n$ converges weakly to an $x \in Fix T_{A,B}$ with $P_{A}x \in A \cap B$.
\end{theorem}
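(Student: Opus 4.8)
The plan is to recognise $T_{A,B}$ as a single-valued, firmly nonexpansive operator with a nonempty fixed point set, to invoke the classical weak-convergence theory for iterates of averaged nonexpansive maps, and then to read off $P_A x \in A\cap B$ from a characterisation of $\Fix T_{A,B}$. First I would record the structural facts that do \emph{not} use $A\cap B\neq\emptyset$: since $A$ and $B$ are nonempty, closed, and convex, $P_A$ and $P_B$ are single-valued and firmly nonexpansive, so the reflections $R_A=2P_A-\Id$ and $R_B=2P_B-\Id$ are nonexpansive, hence so is $R_BR_A$, and therefore $T_{A,B}=\tfrac12(\Id+R_BR_A)$ is firmly nonexpansive (equivalently, $\tfrac12$-averaged) with full domain $H$.

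Next I would compute $\Fix T_{A,B}$. Unwinding $T_{A,B}x=x$ gives $R_BR_Ax=x$, i.e. $2P_BR_Ax-R_Ax=x$, i.e. $2P_BR_Ax=x+R_Ax=2P_Ax$; hence
\[
x\in\Fix T_{A,B}\iff P_BR_Ax=P_Ax .
\]
In particular, for $x\in\Fix T_{A,B}$ we have $P_Ax\in A$ and $P_Ax=P_BR_Ax\in B$, so $P_A\bigl(\Fix T_{A,B}\bigr)\subseteq A\cap B$. Conversely, any $z\in A\cap B$ satisfies $P_Az=z$ and $P_Bz=z$, whence $R_Az=R_Bz=z$ and $T_{A,B}z=z$; thus $A\cap B\subseteq\Fix T_{A,B}$, and the standing hypothesis $A\cap B\neq\emptyset$ forces $\Fix T_{A,B}\neq\emptyset$.

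With these ingredients the convergence is the standard argument for averaged operators (see, e.g., \cite[Chapter 5]{BC}): since $T_{A,B}$ is (firmly) nonexpansive with $\Fix T_{A,B}\neq\emptyset$, the orbit $(x_n)$ is Fej\'er monotone with respect to $\Fix T_{A,B}$ and hence bounded; averagedness yields the asymptotic regularity $\|x_n-x_{n+1}\|=\|x_n-T_{A,B}x_n\|\to 0$; the demiclosedness principle for nonexpansive maps then shows that every weak sequential cluster point of $(x_n)$ lies in $\Fix T_{A,B}$; and Opial's lemma, combined with Fej\'er monotonicity, upgrades this to weak convergence of the whole sequence to a single $x\in\Fix T_{A,B}$. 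Applying the displayed characterisation to this limit gives $P_Ax\in A\cap B$.

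The only genuinely nontrivial input is the demiclosedness/Opial machinery used in the last step, which is where the geometry of the projections ultimately enters; everything else is bookkeeping with the fixed-point identity and Fej\'er monotonicity. It is worth emphasising that $T_{A,B}$ is firmly nonexpansive with full domain \emph{irrespective} of whether $A$ and $B$ intersect, so one does not need to assume, as in Theorem~\ref{thm:LionsandMercier}, that $N_A+N_B$ is maximally monotone: the sole purpose of $A\cap B\neq\emptyset$ is to guarantee $\Fix T_{A,B}\neq\emptyset$, which is exactly the point at which the present statement improves on the Lions--Mercier result in the feasibility setting.
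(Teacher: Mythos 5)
Your argument is correct and follows exactly the route the paper indicates: the paper cites this as \cite[Fact 5.9]{BCL} and only remarks that it rests on the firm nonexpansivity of $T_{A,B}$ (as a $\tfrac12$-average of the nonexpansive $R_BR_A$) together with the fixed-point identity of Proposition~\ref{prop:fixedpoint}, which are precisely the ingredients you assemble via Fej\'er monotonicity, asymptotic regularity, demiclosedness, and Opial. No gaps; your closing observation that $A\cap B\neq\emptyset$ serves only to make $\Fix T_{A,B}$ nonempty (in contrast with the maximal monotonicity hypothesis in Theorem~\ref{thm:LionsandMercier}) matches the paper's discussion as well.
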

It should be noted that Zarantonello gave an example showing that when $C$ is not closed and affine $P_C$ need not be weakly continuous \cite{zarantonello1971projections} (see also \cite[ex. 4.12]{BC}). Despite the potential discontinuity of the resolvent $J_A^\lambda$, Svaiter later demonstrated that $J_A^\lambda x_n$ converges weakly to some $v \in {\rm zer} (\A+\B)$ \cite{Svaiter}.

Theorem \ref{thm:BCL} relies on the firm nonexpansivity of $T_{A,B}$. This is an immediate consequence of the fact that it is a $1/2$-average of $R_BR_A$ with the identity and that $P_A$, $P_B$ are themselves firmly nonexpansive so that $R_A$, $R_B$ and hence $R_BR_A$ are nonexpansive. The proof of theorem \ref{thm:LionsandMercier} similarly relies on the firm nonexpansivity of $J_\A^\lambda$ and $J_\B^\lambda$; its requirement that $\A+\B$ be maximal monotone was later relaxed by Svaiter \cite{Svaiter}.

\subsection{Through the Lens of Monotone Operator Sums}\label{operator_sums}

While our principle interest lies in the less general setting of projection operators, much of the investigation of the Douglas--Rachford algorithm has centered on analysis of the problem \eqref{eqn:operator_sum_problem}. We provide a brief summary.

In 1989 (\cite{EB92}), Jonathan Eckstein and Dimitri Bertsekas motivated the advantage of $T_{\B,\A}$ among resolvent methods as a \emph{splitting method}: a method which employs separate computation of resolvents for $\A$ and $\B$ in lieu of attempting to compute the resolvent of $\A + \B$ directly. They showed that, in the case where $\text{zer}(\A+\B)=\emptyset$, the sequence \eqref{def:DR} is unbounded, a useful diagnostic observation. They also demonstrated that with exact evaluation of resolvents the Douglas--Rachford method is a special case of the \emph{proximal point algorithm} \cite[Theorem 6]{EB92} in the sense of iterating a resolvent operator \cite{rockafellar1976monotone}:
\begin{align}\label{iterated_resolvent}
&x_{n+1}:=J_{\delta_n A} \; \text{where}\; \delta_n > 0,\;\sum_{n\in \mathbb{N}}\delta_n = +\infty,\\
\text{and}\;\; &A:H\rightarrow 2^H\;\; \text{is maximally monotone with}\;\; {\rm zer}A \neq \emptyset.
\end{align}
For more information on this characterization, see \cite[Theorem 23.41]{BC}. In his PhD dissertation \cite{eckstein1989splitting}, Eckstein went on to show that the Douglas--Rachford operator may, however, fail to be a \emph{proximal mapping} \cite[Theorem 27.1]{BC} in the sense of satisfying
\begin{align}\label{iterated_prox}
x_{n+1}:=\text{prox}_{\delta_n f} x_n \quad \text{where}&\quad \delta_n > 0,\;\sum_{n\in \mathbb{N}}\delta_n = +\infty,\;\text{and}\;f\in \Gamma_0(H) \\
\text{and}&\quad \text{prox}_{\delta_n f}x:=\underset{y \in X}{\argmin}\left(\delta_n f(y)+ \frac{1}{2}\|x-y\|^2 \right).\nonumber
\end{align}
Since $\text{prox}_{\delta_n f} = J_{\partial (\delta_n f)}$ (see, for example, \cite{BC}), clearly \eqref{iterated_prox} implies \eqref{iterated_resolvent}. This is also why, in the literature, \emph{Douglas--Rachford splitting} is frequently described in terms of prox operators as
\begin{align}
\textbf{Step 0.}&\quad \text{Set initial point}\quad x_0 \; \; \text{and parameter} \;\; \eta > 0\label{LP_notation} \\
\textbf{Step 1.}&\quad \text{Set}\;\; \begin{cases}
y_{n+1} &\in \underset{y}{\argmin}\left \{f(y)+\frac{1}{2\eta}\|y-x_n\|^2 \right \}=\text{prox}_{\eta f}(x_n)\\
z_{n+1} &\in \underset{z}{\argmin}\left \{g(z)+\frac{1}{2\eta}\|2y_{n+1}-x_n-z\|^2 \right \}=\text{prox}_{\eta g}(2y_{n+1}-x_n)\\
x_{n+1} &= x_n + (z_{n+1}-y_{n+1})
\end{cases},\nonumber
\end{align}
which simplifies to \eqref{def:DR} when $f:=\iota_A$ and $g:=\iota_B$ are indicator functions for convex sets. See, for example, \cite{LP,patrinos2014douglas}.

In 2018, Heinz Bauschke, Jason Schaad, and Xianfu Wang \cite{bauschke2018douglas} investigated Douglas--Rachford operators which fail to satisfy \eqref{iterated_prox}, demonstrating that for linear relations which are maximally monotone $T_{\A,\B}$ generically does not satisfy \eqref{iterated_prox}. 

In 2004, Combettes provided an excellent illumination of the connections between the Douglas--Rachford method, the Peaceman-Rachford method, the backward-backward method, and the forward-backward method \cite{comb}. He also established the following result on a perturbed, relaxed extension of DR, which we quote with minor notation changes.
\begin{theorem}[Combettes, 2004]Let $\gamma \in ]0,+\infty[$, let $(\nu_n)_{n \in \mathbb{N}}$ be a sequence in $]0,2[$, and let $(a_n)_{n \in \mathbb{N}}$ and $(b_n)_{n \in \mathbb{N}}$ be sequences in ${H}$. Suppose that $0 \in {\rm ran}(\A+\B)$, $\sum_{n \in \mathbb{N}}\nu_n (2-\nu_n)=+\infty$, and $\sum_{n \in \mathbb{N}}(\|a_n\|+\|b_n\|)<+\infty$. Take $x_0 \in \mathcal{H}$ and set
	\begin{equation*}
	(\forall n \in \mathbb{N})\; x_{n+1}=x_n+\nu_n\left(J_{\gamma \A} \left(2(J_{\gamma \B} x_n + b_n)-x_n \right)+a_n - \left(J_{\gamma \B x_n}+b_n\right) \right).
	\end{equation*}
	Then $(x_n){n \in \mathbb{N}}$ converges weakly to some point $x \in {H}$ and $J_{\gamma \B}x \in (\A+\B)^{-1}(0)$.
\end{theorem}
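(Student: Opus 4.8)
The plan is to recognize the recursion as an \emph{inexact, relaxed Krasnoselskii--Mann iteration} of the (unperturbed) Douglas--Rachford operator and to finish with the standard Fej\'er-monotonicity argument. Put $J_{\gamma\A}:=(\Id+\gamma\A)^{-1}$, $J_{\gamma\B}:=(\Id+\gamma\B)^{-1}$, $R_{\gamma\A}:=2J_{\gamma\A}-\Id$, $R_{\gamma\B}:=2J_{\gamma\B}-\Id$, and let
\[
T:=J_{\gamma\A}(2J_{\gamma\B}-\Id)+(\Id-J_{\gamma\B}),
\]
an operator of the form in Theorem~\ref{thm:LionsandMercier} (with the roles of $\A$ and $\B$ interchanged). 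A one-line expansion gives $T=\tfrac12(\Id+R_{\gamma\A}R_{\gamma\B})$; since resolvents of maximal monotone operators are firmly nonexpansive with full domain, $R_{\gamma\A}$ and $R_{\gamma\B}$ are nonexpansive, so $R:=R_{\gamma\A}R_{\gamma\B}$ is nonexpansive and $T=\tfrac12(\Id+R)$ is firmly nonexpansive. Moreover $x\in\Fix T$ if and only if $J_{\gamma\B}x\in(\A+\B)^{-1}(0)$: with $p:=J_{\gamma\B}x$, the equation $Tx=x$ is equivalent to $p=J_{\gamma\A}(2p-x)$, i.e.\ to $x-p\in\gamma\B p$ and $p-x\in\gamma\A p$ simultaneously, whose sum gives $0\in\gamma(\A+\B)p$; reversing the computation yields the converse. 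Hence the hypothesis $0\in\operatorname{ran}(\A+\B)$ says precisely that $\Fix T\neq\emptyset$.

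Next I would absorb the perturbations. Writing the recursion as $x_{n+1}=x_n+\nu_n(Tx_n-x_n+e_n)$, one reads off
\[
e_n=\big(J_{\gamma\A}(2J_{\gamma\B}x_n+2b_n-x_n)-J_{\gamma\A}(2J_{\gamma\B}x_n-x_n)\big)+a_n-b_n,
\]
so nonexpansiveness of $J_{\gamma\A}$ yields $\|e_n\|\le\|a_n\|+3\|b_n\|$, whence $\sum_n\|e_n\|<\infty$, and a fortiori $\sum_n\nu_n\|e_n\|<\infty$ because $\nu_n<2$. Thus $(x_n)$ is exactly an inexact relaxed Krasnoselskii--Mann sequence for the firmly nonexpansive operator $T$, with summable errors and relaxation parameters satisfying $\sum_n\nu_n(2-\nu_n)=+\infty$.

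The convergence argument then runs along classical lines. Fix $y\in\Fix T$. Using firm nonexpansivity of $\Id-T$ (which gives $\langle x_n-y,\,x_n-Tx_n\rangle\ge\|x_n-Tx_n\|^2$) and expanding the square, one obtains the quasi-Fej\'er estimate
\[
\|x_{n+1}-y\|^2\le\|x_n-y\|^2-\nu_n(2-\nu_n)\|x_n-Tx_n\|^2+\varepsilon_n,
\]
where $\varepsilon_n$ collects the cross-terms involving $e_n$; the same estimate forces $\|x_{n+1}-y\|\le\|x_n-y\|+\nu_n\|e_n\|$, so $(x_n)$ is bounded and consequently $(\varepsilon_n)$ is summable. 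Standard consequences follow: $(\|x_n-y\|)$ converges for every $y\in\Fix T$, and $\sum_n\nu_n(2-\nu_n)\|x_n-Tx_n\|^2<\infty$. Writing $T=\tfrac12(\Id+R)$ with $R$ nonexpansive, a short computation yields $\|x_{n+1}-Rx_{n+1}\|\le\|x_n-Rx_n\|+2\nu_n\|e_n\|$, so $(\|x_n-Rx_n\|)$ converges; combined with $\sum_n\nu_n(2-\nu_n)\|x_n-Tx_n\|^2<\infty$ and $\sum_n\nu_n(2-\nu_n)=+\infty$ this forces the limit to be $0$, i.e.\ $x_n-Tx_n\to0$. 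By the demiclosedness principle ($\Id-T$ is demiclosed at $0$ because $T$ is nonexpansive \cite{BC}), every weak sequential cluster point of $(x_n)$ lies in $\Fix T$; together with convergence of $(\|x_n-y\|)$ for each $y\in\Fix T$, Opial's lemma delivers a single weak limit $x\in\Fix T$. Finally $x\in\Fix T$ means exactly $J_{\gamma\B}x\in(\A+\B)^{-1}(0)$, as claimed.

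I expect the only genuinely delicate point to be upgrading $\liminf_n\|x_n-Tx_n\|=0$ — immediate from the summability above — to $x_n-Tx_n\to0$; this is precisely where firm nonexpansivity of $T$, rather than mere nonexpansivity, is needed, entering through the quasi-monotonicity of $(\|x_n-Rx_n\|)$. The rest is bookkeeping, the only real care being to carry the summable perturbation terms correctly through the Fej\'er inequality.
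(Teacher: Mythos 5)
The paper itself gives no proof of this result: as a survey it simply quotes the theorem (with minor notational changes) from Combettes's 2004 article. Your argument is correct and is essentially the argument of that original source: you recast the scheme as an inexact relaxed Krasnoselskii--Mann iteration of the firmly nonexpansive operator $T=\tfrac12(\Id+R_{\gamma\A}R_{\gamma\B})$, check the fixed-point characterization $\Fix T\neq\emptyset \iff 0\in{\rm ran}(\A+\B)$ with $J_{\gamma\B}(\Fix T)\subset(\A+\B)^{-1}(0)$, absorb the perturbations via the correct bound $\|e_n\|\le\|a_n\|+3\|b_n\|$, and conclude by quasi-Fej\'er monotonicity, demiclosedness of $\Id-T$, and Opial's lemma, exactly as in the standard treatment.
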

At the same time Eckstein and Svaiter conducted a similar investigation through the lens of Fej\'{e}r monotonicity, allowing the proximal parameter to vary from operator to operator and iteration to iteration \cite{eckstein2008family}.

In 2011, Bingsheng He and Xiaoming Yuan provided a simple proof of the worst case $O(1/k)$ convergence rate in the case where the maximally monotone operators $\A$ and $\B$ are continuous on $\mathbb{R}^n$ \cite{he2015convergence}.

In 2011 \cite{BBHM2012}, Bauschke, Radu Bo{\c{t}, Warren Hare, and Walaa Moursi analyzed the Attouch-Th\'{e}ra duality of the problem \eqref{eqn:operator_sum_problem}, providing a new characterization of $\Fix T_{\B,\A}$. In their 2013 article \cite{BHM2014} Bauschke, Hare, and Moursi introduced a ``normal problem'' associated with \eqref{eqn:operator_sum_problem} which introduces a perturbation based on an infimal displacement vector (see equation~\eqref{eqn:displacementvector}). In 2014, they went on to rigorously investigate the range of $T_{\A,\B}$ \cite{BHM2016}.
	
	In 2015, Combettes and Pesquet introduced a random sweeping block coordinate variant, along with an analogous variant for the forward-backward method \cite{combettes2015stochastic}. In so doing, they furnished a thorough investigation of quasi-Fej\'er monotonicity.
	
	In 2017 Bauschke, Moursi, and Lukens \cite{BLM2017} provided a detailed unpacking of the connections between the original context of Douglas and Rachford \cite{DR} and the classical statement of the weak convergence provided by Lions and Mercier \cite{LM}. In addition, they provided numerous extensions of the original theory in the case where $\A$ and $\B$ are maximally monotone and affine, including results in the infinite dimensional setting.
	
	In the same year, Pontus Giselsson and Stephen Boyd established bounds for the rates of global linear convergence under assumptions of strong convexity of $g$ (where $\mathbb{B}=\partial g$) and smoothness, with a relaxed averaging parameter \cite{giselsson2017linear}. Giselsson also provided tight global linear convergence rate bounds in the more general setting of monotone inclusions \cite{giselsson2017tight}, namely: when one of $\mathbb{A}$ or $\mathbb{B}$ is strongly monotone and the other cocoercive, when one of $\mathbb{A}$ or $\mathbb{B}$ is both strongly monotone and cocoercive, and when one of $\mathbb{A}$ or $\mathbb{B}$ is strongly monotone and Lipschitz continuous.	In the case where one operator is strongly monotone and Lipschitz continuous, Giselsson demonstrated that the linear convergence rate bounds provided by Lions and Mercier are not tight. In his analysis, he introduced and made use of \emph{negatively averaged operators}---$T$ such that that $-T$ is averaged---proving and exploiting the fact that averaged maps of negatively averaged operators are contractive, in order to obtain the linear convergence results. 
	
	In 2018, Moursi and Lieven Vandenberghe \cite{moursi2018douglas} supplemented Giselsson's work by providing linear convergence results in the case where $\mathbb{A}$ is Lipschitz continuous and $\mathbb{B}$ is strongly monotone, a result result that is not symmetric in $\mathbb{A}$ and $\mathbb{B}$ except when $\mathbb{B}$ is a linear mapping.
	
	The DR operator has also been employed as a step in the construction of a more complicated iterated method. For example, in 2015, Luis Brice{\~n}o-Arias considered the problem of finding a zero for a sum of a normal cone to a closed vector subspace of $H$, a maximally monotone operator, and a cocoercive operator. They provided weak convergence results for a method which employs a DR step applied to the normal cone operator and the maximal monotone operator \cite{briceno2015forward}.
	
	Recently, Minh Dao and Hung Phan \cite{dao2018adaptive} have introduced what they call an adaptive Douglas--Rachford splitting algorithm in the context where one operator is strongly monotone and the other weakly monotone. 
	
	Svaiter has also analysed the semi-inexact and fully inexact cases where, respectively, one or both proximal subproblems are solved only approximately, within a relative error tolerance \cite{svaiter2018weakly}.
	
	The definitive modern treatment of the above history---including the most detailed version of the exposition from \cite{BLM2017} on the connections between the contexts of Douglas and Rachford \cite{DR} and Lions and Mercier \cite{LM}---was given by Walaa Moursi in her PhD dissertation \cite{Moursi}.
	
	\subsubsection{Connection with method of multipliers (ADMM)}\label{ADMM}
	
	We provide here an abbreviated discussion of the connection between Douglas--Rachford method and the so-called \emph{method of multipliers} or \emph{ADMM} (alternating direction method of multipliers). For a more detailed exposition, see Appendix~\ref{appendix:ADMM}.
	
	In 1983 \cite{Gabay}, Daniel Gabay showed that, under appropriate constraint qualifications, the Lagrangian method of Uzawa applied to finding 
	\begin{equation}
	\mathbf{p}:=\underset{v \in V}{\inf} \{F(Bv)+G(v)\},
	\end{equation}	
	where $B$ is a linear operator with adjoint $B^*$ and $F,G$ are convex, is equivalent to DR in the Lions and Mercier sense of iterating resolvents \eqref{LM} applied to the problem of finding
	\begin{equation}
	\mathbf{d}:=\underset{\mu \in H}{\inf} \{G^*(-B^*\mu)+F^*(\mu)\}
	\end{equation}	
	where the former is the primal value and the latter is the dual value associated through Fenchel Duality. See, for example, \cite[Theorem 3.3.5]{BL}. We have presented here a more specific case of his result, namely where $B^t=B^*$; the more general version is in Appendix~\ref{appendix:ADMM}.
	
	Gabay gave to this method what is now the commonly accepted name \emph{method of multipliers}. It is also frequently referred to as \emph{alternating direction method of multipliers (ADMM)}. Gabay went on to also consider an analysis of the Peaceman-Rachford algorithm \cite{peaceman1955numerical} (see also Lions \& Mercier \cite[Algorithm 1]{LM}). Because of this connection, DR, PR and ADMM are frequently studied together. Indeed, another name by which ADMM is known is the \emph{Douglas--Rachford ADM}. 
	
	\begin{remark}[On a point of apparently common confusion]\label{rem:history}
		
		In the literature, we have found it indicated that the close relationship between the ADMM and the iterative schemes in Douglas and Rachford's article \cite{DR} and in Lions and Mercier's article \cite{LM} was explained by Chan and Glowinski in 1978 \cite{chan1978finite}. However, both Glowinski and Marroco's 1975 paper \cite{glowinski1975approximation} and Glowinski and Chan's 1978 paper \cite{chan1978finite} predate Lions and Mercier's 1979 paper \cite{LM}, and neither of them contains any reference to Douglas' and Rachford's article \cite{DR}. 
		
		Lions and Mercier made a note that DR (which they called simply \emph{Algorithm II}) is equivalent to one of the penalty-duality methods studied in 1975 by Gabay and Mercier \cite{gabay1976dual} and by Glowinski and Marocco \cite{glowinski1975approximation}. In both of these articles, the method under consideration is simply identified as \emph{Uzawa's algorithm}. The source of the confusion remains unclear, but the explicit explanation of the connection that we have followed is that of Daniel Gabay in 1983 \cite{Gabay}. In fact, clearly explaining the connection appears to have been one of his main intentions in writing his 1983 book chapter.
	\end{remark}
	
	Reasonable brevity precludes an in-depth discussion of Lagrangian duality beyond establishing the connection of ADMM with Douglas--Rachford. Instead, we refer the interested reader to a recent survey of Moursi and Yuriy Zinchenko \cite{moursi2018note}, who drew Gabay's work to the attention of the present authors. We refer the reader also to the sources mentioned in Remark~\ref{rem:history}, to Glowinski, Osher, and Yin's recent book on splitting methods \cite[Chapter~2]{glowinski2017splitting}, and to the following selected resources, which are by no means comprehensive: \cite{bertsekas2015convex,he20121,eckstein2015understanding,he2015non,glowinski2014alternating,fortin2000augmented,elser2017matrix}.
	
	\subsection{Extensions to $N$ sets}\label{subsec:Nsets}
	
	The method of alternating projections, and the associated convergence results, readily extend to the feasibility problem for $N$ sets
	\begin{equation}\label{eqn:Nsets}
	\textrm{Find}\quad x\in \cap_{k=1}^N S_k,
	\end{equation}
	to yield the method of \emph{cyclic projections} that involves iterating $T_{S_1S_2\cdots S_N} = P_{S_N}P_{S_{N-1}}\cdots P_{S_1}$.
	
	However, even for three sets the matching extension of DR,
	$$
	x_{n+1}\ =\ \frac{1}{2}\left(I\ +\ R_{S_3}R_{S_2}R_{S_1} \right)(x_n)
	$$
	may cycle and so fail to solve the feasibility problem. See Figure~\ref{fig:cyclicfail}, an example due to Sims that has previously appeared in \cite{Tam}.
	\begin{figure}
		\begin{center}
			\begin{tikzpicture}[scale=1.5]
			%Draw S1
			\draw [<->,blue] (0,2) -- (0,-0.5);
			\node [below right,blue] at (0,-0.5) {$S_1$};
			%Draw S2
			\draw [<->,purple] (-1.025,1.169) -- (1.525,-.3031);
			\node [above left,purple] at (-1.025,1.169) {$S_3$};
			%Draw S3
			\draw [<->,red] (1.025,1.169) -- (-1.525,-.3031);
			\node [above right,red] at (1.025,1.169) {$S_2$};	
			
			%draw x
			\draw [fill,black] (-1,0) circle [radius=0.015];
			\node [above left] at (-1,0) {$x = R_{S_3}R_{S_2}R_{S_1}x$};
			
			%draw first reflection
			\draw [black,dashed,->] (-.8,0) -- (.8,0);
			\draw [fill,black] (1,0) circle [radius=0.015];
			\node [above right] at (1,0) {$R_{S_1}x$};
			
			%draw second reflection
			\draw [black,dashed,<-] (.1,1.5588) -- (.9,.1732);
			\draw [fill,black] (0,1.732) circle [radius=0.015];
			\node [above left] at (0,1.732) {$R_{S_2}R_{S_1}x$};
			
			%draw third reflection
			\draw [black,dashed,->] (-.1,1.5588) -- (-.9,.1732);
			
			\end{tikzpicture}
		\end{center}
		\caption{The algorithm $x_n:=(\frac{1}{2}R_C R_B R_A + \frac12 \Id)^n x_0 $ may cycle.}\label{fig:cyclicfail}
	\end{figure}
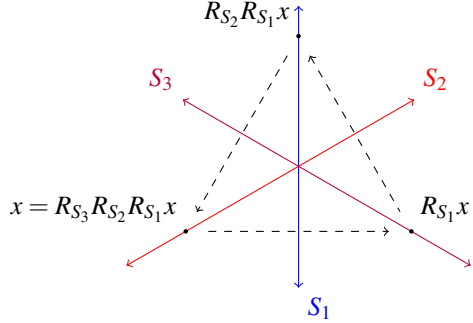
	
	The most commonly used extension of DR from $2$ sets to $N$ sets is Pierra's product space method \cite{Pierra}. More recently Borwein and Tam have introduced a cyclic variant \cite{BTam}.
	
	\subsubsection{Pierra's Product Space Reformulation: ``Divide and Concur'' Method}\label{sec:productspace}
	
	To apply DR for finding $x \in \cap_{k=1}^N S_k \ne \emptyset$, we may work in the Hilbert product space $\mathbf{H} = H^{N}$ as follows.
	\begin{align}
	\text{Let}\quad	S&:= S_1 \times \dots \times S_N\nonumber \\
	\text{and}\quad	D&:= \{(x_1,\dots,x_N)\in \mathbf{H} : x_1=x_2=\dots = x_N \}\label{eqn:B}
	\end{align}
	and apply the DR method to the two sets $S$ and $D$. The product space projections for $x=(x_1,\dots,x_N)\in \mathbf{H}$ are
	\begin{align*}  P_S(x_1,\dots,x_N)&=(P_{S_1}(x_1),\dots, P_{S_N}(x_N)), \\
	\textrm{and} \quad P_D(x_1,\dots,x_N)&=\left(\frac{1}{N}\sum_{k=1}^N x_k,\dots,\frac{1}{N}\sum_{k=1}^N x_k\right).
	\end{align*}
	The method was first nicknamed \emph{divide and concur} by Simon Gravel and Veit Elser \cite{GE}---the latter of whom credits the former for the name \cite{Elseremail}---and the diagonal set $D$ in this context is referred to as the \emph{agreement set}. It is clear that any point $x \in S \cap D$ has the property that $x_1 = x_2 = \dots = x_N \in \cap_{k=1}^N S_k$. It is also clear that $D$ is a closed subspace of $\mathbf{H}$ (so, $P_D$ is weakly continuous) and that, when $S_1,\dots,S_N$ are convex, so too is $S$.
	
	The form of $P_D$ and its consequent linearity allows us to readily unpack the product space formulation to yield the iteration,
	$$
	\left(x_k(n+1)\right)_{k=1}^N\ = \left(x_k(n) - a(n)\ +\ 2A(n) -P_{S_k}\left(x_k(n)\right)\right)_{k=1}^N,
	$$
	where $a(n) = \frac{1}{N}\sum_{k=1}^N x_k(n)$ and $A(n) = \frac{1}{N}\sum_{k=1}^N P_{S_k}\left(x_k(n)\right)$, under which in the convex case the sequence of successive iterates weakly converges (by theorem~\ref{thm:BCL}) to a limit $\left(x_1(\infty),x_2(\infty),\cdots,x_N(\infty)\right)$ for which $P_{S_k}\left(x_k(\infty)\right)$ is, for any $k = 1,2,\cdots,N$, a solution to the $N$-set feasibility problem.
	
	A product space schema can also be applied with AP instead of DR, to yield the method of \emph{averaged projections},
	$$
	x_{n+1} = \frac{1}{N}\sum_{i=1}^N P_i(x_i).
	$$
	
	\subsubsection{Cyclic Variant: Borwein-Tam Method}	The cyclic version of DR, also called the \emph{Borwein-Tam} method, is defined by
	\begin{equation}
	T_{[S_1 S_2 \dots S_N]}:=T_{S_N,S_1}T_{S_{N-1}S_N}\dots T_{S_2,S_3}T_{S_1,S_2},
	\end{equation}
	where each $T_{S_i,S_j}$ is as defined in \eqref{def:DR}. The key convergence result is as follows.
	\begin{theorem}[Borwein \& Tam, 2014]
		Let $S_1,\dots, S_N \subset H$ be closed and convex with nonempty intersection. Let $x_0 \in H$ and set
		\begin{equation}
		x_{n+1}:=T_{[S_1 S_2 \dots S_N]}x_n.
		\end{equation}
		Then $x_n$ converges weakly to $x$ which satisfies $P_{S_1}x = P_{S_2}x = \dots = P_{S_N}x \in \cap_{k=1}^N S_k$.
	\end{theorem}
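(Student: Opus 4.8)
The plan is to recognise $T := T_{[S_1 S_2 \dots S_N]}$ as an averaged nonexpansive operator with nonempty fixed point set, invoke the standard weak‑convergence theorem for such operators (the same machinery underlying Theorem~\ref{thm:BCL}), and then pin down $\Fix T$ precisely enough to extract the stated conclusion. Throughout I read indices cyclically, so $S_{N+1}:=S_1$, and I abbreviate $T_i := T_{S_i,S_{i+1}} = \tfrac12\bigl(\Id + R_{S_{i+1}}R_{S_i}\bigr)$, so that $T = T_N T_{N-1}\cdots T_1$. First I would record that each $P_{S_i}$ is firmly nonexpansive, hence each $R_{S_i}$ is nonexpansive, hence $R_{S_{i+1}}R_{S_i}$ is nonexpansive, hence each $T_i$ is firmly nonexpansive, i.e. $\tfrac12$-averaged. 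Since a composition of finitely many averaged operators is again averaged (see \cite{BC}), $T$ is averaged, and in particular $\Id - T$ is demiclosed at $0$.

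Next I would establish that $\Fix T = \bigcap_{i=1}^N \Fix T_i$, and that this set is nonempty. Nonemptiness is immediate: any $p \in \bigcap_{k=1}^N S_k$ satisfies $P_{S_i}p = p$, hence $R_{S_i}p = p$, hence $T_i p = p$ for every $i$, so $\bigcap_k S_k \subseteq \bigcap_i \Fix T_i$. The inclusion $\bigcap_i \Fix T_i \subseteq \Fix T$ is trivial. For the reverse, fix $y \in \bigcap_i \Fix T_i$ and $x \in \Fix T$; nonexpansiveness of the $T_i$ together with $T_i y = y$ gives the chain $\|x-y\| = \|T_N\cdots T_1 x - y\| \le \|T_{N-1}\cdots T_1 x - y\| \le \cdots \le \|T_1 x - y\| \le \|x-y\|$, so equality holds throughout. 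Feeding each equality $\|T_j\cdots T_1 x - y\| = \|T_{j-1}\cdots T_1 x - y\|$ into the firm-nonexpansiveness inequality for $T_j$ (with the fixed point $y$) forces $(\Id - T_j)(T_{j-1}\cdots T_1 x) = 0$; running $j$ from $1$ to $N$ shows successively $T_1 x = x, T_2 x = x, \dots, T_N x = x$, so $x \in \bigcap_i \Fix T_i$. Because $T$ is averaged with $\Fix T \ne \emptyset$, the iterates $x_{n+1}=Tx_n$ are Fej\'er monotone with respect to $\Fix T$, asymptotically regular, and (using demiclosedness of $\Id - T$ at $0$ and Opial's lemma) converge weakly to some $x \in \Fix T = \bigcap_i \Fix T_i$.

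It then remains to show that for such an $x$ the projections $P_{S_i}x$ all coincide and lie in $\bigcap_k S_k$. Fix $i$ and put $a_i := P_{S_i}x$. From $x \in \Fix T_i$ we have $R_{S_{i+1}}R_{S_i}x = x$; substituting $R_{S_i}x = 2a_i - x$ and simplifying $R_{S_{i+1}}(2a_i - x) = x$ yields $P_{S_{i+1}}(2a_i - x) = a_i$, and in particular $a_i \in S_{i+1}$. Hence $a_{i-1} \in S_i$ for every $i$ (cyclically), and since $a_i$ is the nearest point of $S_i$ to $x$ we get $\|x - a_i\| \le \|x - a_{i-1}\|$; chaining these inequalities around the cycle forces $\|x-a_1\| = \cdots = \|x-a_N\|$. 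Then $a_{i-1}$ and $a_i$ both lie in $S_i$ and both realise the distance from $x$ to $S_i$, so uniqueness of the projection onto the closed convex set $S_i$ gives $a_{i-1} = a_i$. Therefore $P_{S_1}x = P_{S_2}x = \cdots = P_{S_N}x =: a$, and since $a = a_i \in S_i$ for each $i$, we conclude $a \in \bigcap_{k=1}^N S_k$.

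I expect the main obstacle to be the fixed-point identity $\Fix T = \bigcap_i \Fix T_i$: this is exactly where firm nonexpansiveness of the $T_i$ (not merely nonexpansiveness) is indispensable, and some care is needed to run the telescoping argument in the right order. The second genuinely non-routine point is the cyclic distance-chaining step, which upgrades the weak two-set relations $a_i \in S_i \cap S_{i+1}$ into the rigid statement that every $P_{S_i}x$ agrees. Everything else — averagedness of compositions, weak convergence of averaged-operator iterations — is standard and may be cited from \cite{BC}.
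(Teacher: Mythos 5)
Your proposal is correct. Note that the survey itself does not prove this theorem; it defers to the original sources, citing \cite[Theorem~3.1]{BTam} and \cite[Theorem~2.4.5]{Tam}. Your argument is essentially the standard one used there: each $T_{S_i,S_{i+1}}$ is firmly nonexpansive, the composition is averaged, and since $\bigcap_k S_k\neq\emptyset$ gives a common fixed point, $\Fix T_{[S_1\dots S_N]}=\bigcap_i\Fix T_{S_i,S_{i+1}}$ (your equality-case telescoping with firm nonexpansiveness is exactly the right tool here, and this is where the nonempty-intersection hypothesis is genuinely used); weak convergence to a point of $\Fix T$ then follows from Fej\'er monotonicity plus demiclosedness as in \cite{BC}, and your cyclic chaining of the distances $\|x-P_{S_i}x\|$, combined with uniqueness of projections onto closed convex sets, correctly upgrades the two-set relations $P_{S_i}x\in S_i\cap S_{i+1}$ to the claimed identity $P_{S_1}x=\dots=P_{S_N}x\in\bigcap_k S_k$. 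No gaps.
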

	For a proof, see \cite[Theorem 3.1]{BTam} or \cite[Theorem 2.4.5]{Tam}, the latter of which---Matthew Tam's dissertation---is the definitive treatise on the cyclic variant.
	
	\subsubsection{Cyclically Anchored Variant (CADRA)}
	
	Bauschke, Noll, and Phan provided linear convergence results for the Borwein-Tam method in the finite dimensional case in the presence of transversality \cite{bauschke2015linear}. At the same time, they introduced the \emph{Cyclically Anchored Douglas--Rachford Algorithm (CADRA)} defined closed, convex sets $A$ (the \emph{anchor} set) and $(B_i)_{i \in \{1,\dots,m\}}$ where
	\begin{align}
	&A\cap \bigcap_{i \in \{1,\dots,m\}}B_i \neq \emptyset \nonumber \\
	\text{and}\quad&(\forall i \in \{1,\dots,m\})T_i = P_{B_i}R_A+\Id -P_A, \quad Z_i = \Fix T_i, \quad Z=\bigcap_{i \in \{1,\dots,m\}}Z_i.\nonumber \\
	\text{where}\quad& (\forall n \in \mathbb{N})\quad x_{n+1}:=Tx_n \quad \text{where}\quad T:=T_m\dots T_2 T_1.\label{CADRA}
	\end{align}
	When $m=1$, CADRA becomes regular DR, which is not the case for the Borwein-Tam method. The convergence result is as follows.
	\begin{theorem}{CADRA (Bauschke, Noll, Phan, 2015 \cite[Theorem 8.5]{bauschke2015linear})}
		The sequence $(x_n)_{n \in \mathbb{N}}$ from \eqref{CADRA} converges weakly to $x \in Z$ with $P_A x \in A\cap \bigcap_{i \in \{1,\dots,m\}}B_i$. Convergence is linear if one of the following hold:
		\begin{enumerate}
			\item X is finite-dimensional and $\text{ri}A\cap \bigcap_{i \in \{1,\dots,m\}}\text{ri}B_i \neq \emptyset$
			\item $A$ and each $B_i$ is a subspace with $A+B_i$ closed and that $(Z_i)_{i \in \{1,\dots,m\}}$ is boundedly linearly regular.
		\end{enumerate}
	\end{theorem}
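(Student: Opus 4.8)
The plan is to obtain the weak-convergence statement by recognizing the iteration $x_{n+1}=Tx_n$ as the orbit of an averaged operator, and then to obtain the two linear rates by feeding a bounded-linear-regularity property of the family $(Z_i)$ into a general linear-convergence principle for compositions.

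\emph{Weak convergence.} First I would note that each $T_i = P_{B_i}R_A + \Id - P_A$ is exactly the Douglas--Rachford operator $T_{A,B_i}=\tfrac12(\Id+R_{B_i}R_A)$ of \eqref{def:DR}; in particular it is firmly nonexpansive (equivalently $\tfrac12$-averaged), $\Fix T_i = Z_i$, and by Theorem~\ref{thm:BCL} every $z\in Z_i$ satisfies $P_A z\in A\cap B_i$. Since any $y\in A\cap\bigcap_i B_i$ has $P_A y = R_A y = P_{B_i}y = y$ and hence $T_i y = y$ for all $i$, the common fixed-point set $Z=\bigcap_i Z_i$ is nonempty. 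A composition of averaged operators is averaged, so $T=T_m\cdots T_1$ is averaged; and because each $T_i$ is quasi-nonexpansive with $Z\ne\emptyset$, the fixed-point set of the composition is the intersection of the individual ones, $\Fix T=\bigcap_i\Fix T_i=Z$ (see \cite{BC}). The Krasnoselskii--Mann theorem for averaged maps then gives $x_n\rightharpoonup x$ for some $x\in\Fix T=Z$, and since $x\in Z_i$ for every $i$ we conclude $P_A x\in A\cap B_i$ for every $i$, i.e. $P_A x\in A\cap\bigcap_i B_i$.

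\emph{Linear convergence.} For the rates I would invoke the following principle: if each $T_i$ is averaged and satisfies a local linear error bound $\mathrm{dist}(x,Z_i)\le c\,\|x-T_i x\|$ on every bounded set, and the family $(Z_i)$ is boundedly linearly regular (for every bounded $S$ there is $\kappa$ with $\mathrm{dist}(x,Z)\le\kappa\max_i\mathrm{dist}(x,Z_i)$ on $S$), then $x_{n+1}=Tx_n$ converges linearly to a point of $Z$. The error bound for $T_i$ is available because $T_i$ is a Douglas--Rachford operator on two convex sets, so the task reduces to verifying bounded linear regularity of $(Z_i)$ under each hypothesis. In case (1), writing $Z_i=(A\cap B_i)+N_{\overline{A-B_i}}(0)$, finite-dimensionality together with $\mathrm{ri}\,A\cap\bigcap_i\mathrm{ri}\,B_i\ne\emptyset$ forces these fixed-point sets to meet non-tangentially, which yields bounded linear regularity. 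In case (2), when $A$ and each $B_i$ are subspaces with $A+B_i$ closed, each $P_A,P_{B_i},R_A$ and hence $T_i$ is linear, each $Z_i$ is a closed subspace (the Friedrichs angle between $A$ and $B_i$ being positive), and the assumed bounded linear regularity of $(Z_i)$ together with the linearity of $T$ yields a positive angle controlling the spectral radius of $T$ on $Z^\perp$, hence the linear rate. The full details are in \cite{bauschke2015linear}.

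\emph{Main obstacle.} The weak-convergence half is a routine assembly of standard facts about averaged operators; the real work lies in the linear-convergence half, and specifically in two transfers of regularity: (a) upgrading firm nonexpansivity of the two-set Douglas--Rachford operator $T_i$ to the linear error bound $\mathrm{dist}(x,Z_i)\lesssim\|x-T_i x\|$ on bounded sets, and (b) deducing bounded linear regularity of the family $(Z_i)$ of Douglas--Rachford \emph{fixed-point} sets from conditions imposed on the original constraint sets $A,B_i$. Step (b) is the crux, because $Z_i$ is in general strictly larger than $A\cap B_i$, so the relative-interior condition of case (1) and the subspace-plus-closedness condition of case (2) must first be pushed through the Douglas--Rachford fixed-point formula before the general composition theorem can be applied.
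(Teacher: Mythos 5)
The survey states this result only as a quotation of \cite[Theorem 8.5]{bauschke2015linear} and supplies no proof of its own, so there is no in-paper argument to compare against; your proposal has to stand on its own. The weak-convergence half does: identifying $T_i=P_{B_i}R_A+\Id-P_A$ with the two-set Douglas--Rachford operator $T_{A,B_i}$, noting firm nonexpansivity, using the fact that a composition of averaged maps with a common fixed point has $\Fix(T_m\cdots T_1)=\bigcap_i\Fix T_i$, invoking Krasnoselskii--Mann, and then reading off $P_Ax\in A\cap B_i$ from Theorem~\ref{thm:BCL} is a correct and complete argument.

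The linear-convergence half, however, contains a genuine gap. You claim the local error bound $\mathrm{dist}(x,Z_i)\le c\,\|x-T_ix\|$ ``is available because $T_i$ is a Douglas--Rachford operator on two convex sets.'' That is false in general: firm nonexpansivity gives no linear error bound, and for convex sets without additional regularity DR can converge arbitrarily slowly (indeed, as recalled elsewhere in this survey, Hesse and Luke \cite{HL13} showed that in the affine case strong regularity is \emph{necessary} for linear convergence, and Borwein--Li--Tam \cite{BLT2015} study precisely the regime $\mathrm{ri}\,A\cap\mathrm{ri}\,B=\emptyset$ where only H\"older-type rates survive). So the per-operator error bound must itself be extracted from hypotheses (1) or (2) — it is part of the same ``transfer of regularity'' you correctly identify as the crux in step (b) — and deferring both that and the bounded linear regularity of $(Z_i)$ to \cite{bauschke2015linear} leaves the heart of the linear-rate claim unproven rather than proved. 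A smaller point: in case (2) the space is not assumed finite-dimensional, so the appeal to a ``spectral radius of $T$ on $Z^\perp$'' should be replaced by an operator-norm (Friedrichs-angle, cf. \cite{BCNPW}) estimate, though the mechanism you have in mind is the right one.
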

	
	\subsubsection{String-averaging and block iterative variants}
	
	In 2016, Yair Censor and Rafiq Mansour introduced the string-averaging DR (SA-DR) and block-iterative DR (BI-DR) variants \cite{censor2016new}. SA-DR involves separating the index set $I:=\{1,\dots,N\}$ (where $N$ is as in \eqref{eqn:Nsets}) into strings along which the two-set DR operator is applied and taking a convex combination of the strings' endpoints to be the next iterate. Formally, letting $I_t:=(i_1^t,i_2^t,\dots,i_{\gamma(t)}^t)$ be an ordered, nonempty subset of $I$ with length $\gamma(t)$ for $t=1,\dots,M$ and $x_0 \in H$, set
	\begin{align*}
	x_{k+1}:=& \sum_{t=1}^M w_t \mathbb{V}_t(x_k) \quad \text{with} \quad w_t>0\;\; (\forall t=1,\dots,M)\;\; \text{and}\;\; \sum_{t=1}^Mw_t = 1 \\
	\text{where} \quad \mathbb{V}_t(x_k) :=& T_{i_{\gamma(t)}^t,i_1^t}T_{i_{\gamma(t)}^t-1,i_{\gamma(t)}^t}\dots T_{i_2^t,i_3^t}T_{i_1^t,i_2^t}(x_k),
	\end{align*}
	where $T_{A,B}$ is the 2-set DR operator. The principal result is as follows.
	\begin{theorem}{SA-DR (Censor \& Mansour, 2016 \cite[Theorem 18]{censor2016new})}
		Let $S_1,\dots,S_N \subset H$ be closed and convex with ${\rm int} \cap_{i \in I}S_i \neq \emptyset$. Then for any $x_0 \in H$, any sequence $(x_k)_{k=1}^\infty$ generated by the SA-DR algorithm with strings satisfying $I=I_1 \cup I_2 \cup \dots \cup I_M$ converges strongly to a point $x^* \in \cap_{i \in I}S_i$.
	\end{theorem}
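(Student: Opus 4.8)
The plan is to rewrite the SA-DR recursion as the fixed-point iteration $x_{k+1}=Ux_k$ for the single operator $U:=\sum_{t=1}^M w_t\mathbb{V}_t$, to identify $\Fix U$ with the feasible set $\bigcap_{i\in I}S_i$, and then to use the interior hypothesis to promote the (automatic) Fej\'er monotonicity of $(x_k)$ to strong convergence.

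First I would assemble the structural properties of $U$. Each two-set operator $T_{A,B}$ is firmly nonexpansive, as noted earlier; hence each string operator $\mathbb{V}_t$, being a finite composition of firmly nonexpansive maps, is averaged, and $U$, a convex combination of averaged maps, is averaged and in particular nonexpansive (see \cite{BC}). Moreover, every $s\in\bigcap_{i\in I}S_i$ satisfies $P_{S_j}s=P_{S_l}s=s$ and hence $T_{S_j,S_l}s=s$ for all $j,l\in I$, so $s\in\Fix\mathbb{V}_t$ for each $t$ and therefore $s\in\Fix U$; thus $\bigcap_{i\in I}S_i\subseteq\Fix U$, so $\Fix U$ is nonempty with nonempty interior. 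For the reverse inclusion I would use the interior hypothesis: picking $c$ and $\rho>0$ with the closed ball of radius $\rho$ about $c$ contained in $\bigcap_{i\in I}S_i$, every $u$ with $\|u\|\le\rho$ satisfies $u=(c+u)-c\in S_l-S_j$, so $0\in{\rm int}(S_l-S_j)$; testing the variational inequalities for $P_{S_j}$ and $P_{S_l}$ at a Douglas--Rachford fixed point against the points of this ball then forces that point to equal its own projection onto $S_j$, giving $\Fix T_{S_j,S_l}=S_j\cap S_l$ (this is also the standard description of $\Fix T_{S_j,S_l}$, \cite{BCL}, under the induced transversality, and applies equally when $j=l$). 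Since the fixed-point set of a composition, and of a convex combination, of averaged operators with a common fixed point equals the intersection of the individual fixed-point sets (\cite{BC}), we obtain $\Fix\mathbb{V}_t=\bigcap_{i\in I_t}S_i$ and then, because $I=\bigcup_{t=1}^M I_t$, $\Fix U=\bigcap_{t=1}^M\Fix\mathbb{V}_t=\bigcap_{i\in I}S_i$.

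With $F:=\Fix U=\bigcap_{i\in I}S_i$ nonempty, nonexpansiveness of $U$ makes $(x_k)$ Fej\'er monotone with respect to $F$, that is, $\|x_{k+1}-z\|\le\|x_k-z\|$ for all $z\in F$. The crucial step is then the elementary observation that a sequence Fej\'er monotone with respect to a set containing a closed ball of radius $\rho$ about a point $c$ must converge in norm: after translating so that $c=0$, testing Fej\'er monotonicity against every $u$ with $\|u\|\le\rho$ yields $2\rho\|x_k-x_{k+1}\|\le\|x_k\|^2-\|x_{k+1}\|^2$, and since $(\|x_k\|)$ is nonincreasing and bounded below the right-hand sides sum to at most $\|x_0\|^2$, so $\sum_k\|x_k-x_{k+1}\|<\infty$ and $(x_k)$ is Cauchy, hence convergent to some $x^*\in H$. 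Finally $\|x_{k+1}-x_k\|\to0$ together with continuity of $U$ gives $Ux^*=\lim_k Ux_k=\lim_k x_{k+1}=x^*$, so $x^*\in\Fix U=\bigcap_{i\in I}S_i$, as claimed.

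I expect the genuine work to be the identification $\Fix U=\bigcap_{i\in I}S_i$ rather than the convergence mechanism: one must verify both that the interior hypothesis collapses each two-set Douglas--Rachford fixed-point set onto the corresponding pairwise intersection and that fixed-point sets behave correctly under composition and convex combination of averaged operators, since otherwise the norm limit produced by the Fej\'er-plus-interior estimate would only be known to lie in $\Fix U$, a priori a strictly larger set than the feasible set. Once that identification is secured, the passage from the usual weak convergence of averaged-operator iterations to strong convergence is soft, being effected entirely by the one-line summability bound above.
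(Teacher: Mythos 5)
This survey states the Censor--Mansour theorem without reproducing its proof (it is quoted with a citation to \cite[Theorem 18]{censor2016new}), so there is no in-paper argument to compare against; judged on its own, your proposal is correct and is essentially the route one expects the original to take: string-averaging of averaged (firmly nonexpansive building-block) operators, identification of the fixed points, Fej\'er monotonicity, and the classical ``Fej\'er monotone with respect to a set with nonempty interior implies norm convergence'' estimate. The two genuinely load-bearing steps are handled properly. First, your collapse of each pairwise fixed-point set: from $\mathbb{B}(c,\rho)\subseteq\bigcap_{i\in I}S_i\subseteq S_j\cap S_l$ one gets $0\in{\rm int}(S_j-S_l)$, and combining the variational inequalities for $P_{S_j}$ and $P_{S_l}$ at a fixed point $x$ with $a:=P_{S_j}x$ yields $\langle x-a,w\rangle\le 0$ for all $w\in S_j-S_l$, forcing $x=a$; this recovers the known description $\Fix T_{A,B}=(A\cap B)+N_{\overline{A-B}}(0)$ of \cite{BCL} in the case where the normal cone is trivial, and covers the degenerate one-set string as you note. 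Second, the fixed-point calculus $\Fix(T_m\cdots T_1)=\bigcap\Fix T_i$ and $\Fix(\sum_t w_t\mathbb{V}_t)=\bigcap_t\Fix\mathbb{V}_t$ is valid precisely because the operators are averaged and share a common fixed point (any point of $\bigcap_{i\in I}S_i$), which you verify before invoking it; without that identification the summability bound $2\rho\|x_k-x_{k+1}\|\le\|x_k\|^2-\|x_{k+1}\|^2$ would indeed only place the norm limit in $\Fix U$. The one caveat worth recording is that your argument, like the survey's statement of the algorithm, treats the weights $w_t$ as fixed across iterations; if the weights were allowed to vary with $k$ the Fej\'er and Cauchy estimates survive unchanged (they only use common fixed points of every $\mathbb{V}_t$), but identifying the limit as feasible would need an extra demiclosedness-type step rather than the single continuity argument $Ux^*=\lim_k Ux_k$.
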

	
	The BI-DR algorithm involves separating $I$ into subsets and applying 2-set DR to each of them by choosing a block index according to the rule $t_k = k \mod M +1$ and setting	
	\begin{align*}
	x_{k+1}:=& \sum_{j=1}^{\gamma({t_k})} w_j^{t_k} z_j \quad \text{with} \quad w_{t_k}>0\;\; (\forall j=1,\dots,\gamma({t_k}))\;\; \text{and}\;\; \sum_{j=1}^{\gamma({t_k})} w_j^{t_k} = 1, \\
	\text{where}\;\; z_j :=& T_{i_j^{t_k},i_{j+1}^{t_k}}(x_k)\;\;(\forall j=1,\dots,\gamma(t_k)-1) \;\; \text{and}\;\; z_{\gamma(t_k)}:= T_{i_{\gamma({t_k})}^{t_k},i_1^{t_k}}(x_k).
	\end{align*}
	The principal result is as follows. 
	\begin{theorem}{BI-DR (Censor \& Mansour, 2016 \cite[Theorem 19]{censor2016new})}
		Let $S_1,\dots,S_N \subset H$ be closed and convex with $\cap_{i \in I}S_i \neq \emptyset$. For any $x_0 \in H$, the sequence $(x_k)_{k=1}^\infty$ of iterates generated by the BI-DR algorithm with $I=I_1 \cup \dots \cup I_M$, after full sweeps through all blocks, converges
		\begin{enumerate}
			\item weakly to a point $x^*$ such that $P_{S_{i_j^t}}(x^*) \in \cap_{j=1}^{\gamma(t)} S_{i_j^t}$ for $j=1,\dots,\gamma(t)$ and $t=1,\dots,M$, and
			\item strongly to a point $x^*$ such that $x^* \in \cap_{i=1}^N S_i$ if the additional assumption ${\rm int} \cap_{i \in I}S_i \neq \emptyset$ holds.
		\end{enumerate} 
	\end{theorem}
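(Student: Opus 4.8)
The plan is to recognise the ``full sweep'' of the BI-DR method as the iteration of a single averaged operator with nonempty fixed point set, to invoke the Krasnoselskii--Mann/Opial machinery already underlying Theorem~\ref{thm:BCL}, and then to decode what membership in that fixed point set says about the block constraints.

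First I would assemble the building blocks. Each two-set operator $T_{A,B}=\tfrac12(\Id+R_BR_A)$ is firmly nonexpansive, being a $\tfrac12$-average of the nonexpansive map $R_BR_A$ with $\Id$; and whenever $A\cap B\neq\emptyset$ one has $\emptyset\neq A\cap B\subseteq\Fix T_{A,B}$ together with the consistency identity $P_Az=P_BR_Az\in A\cap B$ for every $z\in\Fix T_{A,B}$ (expand $z=R_BR_Az$). Writing $\mathbb U_t:=\sum_{j=1}^{\gamma(t)}w_j^t\,T_{i_j^t,i_{j+1}^t}$ with cyclic second index, a convex combination of firmly nonexpansive operators is firmly nonexpansive, so each $\mathbb U_t$ is firmly nonexpansive; moreover every point of $C:=\bigcap_{i\in I}S_i$ is fixed by each $T_{i_j^t,i_{j+1}^t}$, hence by every $\mathbb U_t$. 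Relabelling blocks so that the control sequence runs $1,2,\dots,M,1,2,\dots$, one full sweep is the composition $\mathcal T:=\mathbb U_M\cdots\mathbb U_1$, which is therefore averaged, and $x_{nM}=\mathcal T^nx_0$.

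Next I would pin down $\Fix\mathcal T$. Because the $\mathbb U_t$ are averaged and share the nonempty common fixed point set $C$, the standard facts that the fixed point set of a convex combination, respectively a composition, of averaged operators with a common fixed point equals the intersection of the individual fixed point sets (see, e.g., \cite{BC}) give $\Fix\mathcal T=\bigcap_{t=1}^M\Fix\mathbb U_t=\bigcap_{t=1}^M\bigcap_{j=1}^{\gamma(t)}\Fix T_{i_j^t,i_{j+1}^t}\supseteq C$, which is nonempty. Since $\mathcal T$ is averaged with $\Fix\mathcal T\neq\emptyset$, the Krasnoselskii--Mann iteration $x_{nM}=\mathcal T^nx_0$ converges weakly to some $x^*\in\Fix\mathcal T$: the demiclosedness principle applied to $\Id-\mathcal T$ at $0$ identifies the weak limit as a fixed point, exactly as in Theorem~\ref{thm:BCL}. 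Asymptotic regularity of the block iteration, $\|x_{k+1}-x_k\|\to0$ --- which follows from Fej\'er monotonicity of $(x_k)$ with respect to $C$ together with the averagedness of each $\mathbb U_t$ --- then extends this to weak convergence of the whole sequence, because $\|x_{nM+r}-x_{nM}\|\le r\max_{k\ge nM}\|x_{k+1}-x_k\|\to0$ for each fixed $r\in\{0,\dots,M-1\}$. Finally, feeding $x^*\in\Fix T_{i_j^t,i_{j+1}^t}$ into the consistency identity $P_Az\in A\cap B$ gives $P_{S_{i_j^t}}(x^*)\in S_{i_j^t}\cap S_{i_{j+1}^t}$ for each $t$ and $j$; obtaining the assertion in the stated ``whole-block'' form requires the finer description of DR fixed point sets in the consistent case, $\Fix T_{A,B}=(A\cap B)+N_{\overline{A-B}}(0)$, from which one checks that the projections $P_{S_{i_j^t}}(x^*)$ belonging to a common block all coincide and hence lie in the whole block intersection. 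This bookkeeping is the one genuinely delicate point of part (1).

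For part (2), under the additional hypothesis $\operatorname{int}C\neq\emptyset$ the claim is that the Fej\'er monotone sequence $(x_k)$ converges strongly. The mechanism is standard: one shows $d_C(x_k)\to0$ --- using the interior point to convert the asymptotic information already obtained into a genuine bounded regularity statement for $\mathcal T$ at its fixed point set --- and then a Fej\'er monotone sequence with $d_C(x_k)\to0$ is Cauchy, since for $m\ge n$ and $p_n$ a nearest point of $C$ to $x_n$ one has $\|x_m-x_n\|\le\|x_m-p_n\|+\|p_n-x_n\|\le 2d_C(x_n)\to0$; the strong limit lies in $C$ by continuity of $d_C$. I expect the main obstacles to be precisely these two ``translation'' steps --- identifying $\Fix\mathcal T$ sharply enough to phrase (1) in the claimed form, and producing $d_C(x_k)\to0$ for (2) --- rather than the soft-analysis core, which is just the averaged-operator convergence theory already behind Theorem~\ref{thm:BCL}.
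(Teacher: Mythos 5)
The survey itself offers no proof of this statement; it is quoted verbatim with a citation to Censor and Mansour \cite[Theorem 19]{censor2016new}, so your proposal has to stand on its own. Its skeleton is the natural one and is sound: each block operator $\mathbb U_t=\sum_j w_j^t T_{i_j^t,i_{j+1}^t}$ is firmly nonexpansive and fixes every point of $C:=\cap_{i\in I}S_i$, the full sweep is an averaged operator whose fixed point set is $\bigcap_{t,j}\Fix T_{i_j^t,i_{j+1}^t}\supseteq C$, Krasnoselskii--Mann gives weak convergence of the sweep subsequence, and the summability of $\|x_{k+1}-x_k\|^2$ (from firm nonexpansiveness against a point of $C$) upgrades this to the whole sequence. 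The genuine gap is exactly the step you wave through in part (1). From $x^*\in\Fix T_{A,B}$ you only get the pairwise statement $P_{S_{i_j^t}}(x^*)\in S_{i_j^t}\cap S_{i_{j+1}^t}$, and the characterization $\Fix T_{A,B}=(A\cap B)+N_{\overline{A-B}}(0)$ applied pair by pair does \emph{not} yield that the projections coincide: with $A$ the $x$-axis and $B$ the closed upper half-plane in $\R^2$, the point $z=(0,1)$ is a fixed point of $T_{A,B}$ yet $P_Az=(0,0)\neq(0,1)=P_Bz$. What rescues the ``whole-block'' form of the claim is the cyclic wrap-around operator $T_{i_{\gamma(t)}^t,i_1^t}$ inside each block, which your argument never uses.

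The repair is short, so the approach is salvageable: write $c_j:=P_{S_{i_j^t}}x^*$ and $d_j:=x^*-c_j$ (indices cyclic in the block). Since $x^*\in\Fix T_{S_{i_j^t},S_{i_{j+1}^t}}$, the identity behind Proposition~\ref{prop:fixedpoint} gives $P_{S_{i_{j+1}^t}}(R_{S_{i_j^t}}x^*)=c_j$ with $R_{S_{i_j^t}}x^*=c_j-d_j$; its variational inequality yields $\langle d_j,\,b-c_j\rangle\ge 0$ for all $b\in S_{i_{j+1}^t}$, while the variational inequality for $P_{S_{i_j^t}}$ at $x^*$, tested with $c_{j-1}\in S_{i_{j-1}^t}\cap S_{i_j^t}\subseteq S_{i_j^t}$, yields $\langle d_j,\,c_{j-1}-c_j\rangle\le 0$. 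Since $c_{j-1}-c_j=d_j-d_{j-1}$, this gives $\|d_j\|^2\le\langle d_j,d_{j-1}\rangle$ around the cycle, forcing all $\|d_j\|$ equal and then, by equality in Cauchy--Schwarz, $d_1=\dots=d_{\gamma(t)}$; hence all $c_j$ coincide and, lying in each $S_{i_j^t}\cap S_{i_{j+1}^t}$, belong to $\cap_{j}S_{i_j^t}$ as claimed. Part (2) is likewise left as a promissory note: you never produce $d_C(x_k)\to 0$. The cleaner route is to note that ${\rm int}\,C\neq\emptyset$ puts $0$ in the interior of each $S_{i_j^t}-S_{i_{j+1}^t}$, so every $N_{\overline{S_{i_j^t}-S_{i_{j+1}^t}}}(0)=\{0\}$, the sweep's fixed point set collapses to $C$, and the classical fact that a Fej\'er monotone sequence with respect to a set with nonempty interior converges in norm (Bauschke--Borwein) finishes the proof with the limit $x^*\in C$; your ``bounded regularity'' detour is both unproved and unnecessary.
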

	
	\subsubsection{Cyclic $r$-sets DR: Arag\'on Artacho-Censor-Gibali Method}
	
	Motivated by the intuition of the Borwein-Tam method and the example in Figure~\ref{fig:cyclicfail}, Francisco Arag\'on Artacho, Censor, and Aviv Gibali have recently introduced another method which simplifies to classical Douglas--Rachford method in the $2$-set case \cite[Theorem 3.7]{censor2018cyclic}. 
	
	For the feasibility problem of $N$ sets $S_0, \dots, S_{N-1}$, we denote by $S_{N,r}(d)$ the finite sequence of sets:
	$$
	S_{N,r}(d):= S_{\left((r-1)d-(r-1)\right){\rm mod}\; N}, S_{\left((r-1)d-(r-2)\right){\rm mod}\; N},\dots,S_{\left((r-1)d\right){\rm mod}\; N}.
	$$
	The method is then given by
	\begin{align*}
	x_{n+1}\;\;:=&\;\; \mathbb{V}_{N}\mathbb{V}_{N-1}\dots \mathbb{V}_{1}(x_n).\\
	\text{where}\quad \mathbb{V}_d \;\;:=&\;\; \frac{1}{2}\left(\Id + V_{C_{m,r}(d)} \right)\\
	\text{and}\quad  V_{C_0,C_1,\dots,C_{r-1}}\;\;:=&\;\;R_{C_{r-1}}R_{C_{r-2}}R_{C_0}.
	\end{align*}
	Provided the condition ${\rm int}\left(\cap_{i=0}^{N-1}S_i \right) \neq \emptyset$, the sequence $(x_n)_{n=1}^\infty$ converges weakly to a solution of the feasibility problem. They also provided a more general result, \cite[Theorem 3.4]{censor2018cyclic}, whose sufficiency criteria are, generically, more difficult to verify.
	
	\subsubsection{Sums of $N$ Operators: Spingarn's Method}
	
	One popular method for finding a point in the zero set of a sum of $N$ monotone operators $T_1,\dots,T_N$ is the reduction to a $2$ operator problem given by
	\begin{align}
	\A :=& T_1 \otimes T_2 \otimes \dots \otimes T_N\\
	\B :=& N_B
	\end{align}
	where $N_B$ is the normal cone operator \eqref{normalconeoperator} for $B$ and $B$ is the \emph{agreement set} defined in \eqref{eqn:B}. As $\A$ and $\B$ are maximal monotone, the weak convergence result is given by Svaiter's relaxation \cite{Svaiter} of Theorem~\ref{thm:LionsandMercier}. The application of DR to this problem is analogous to the product space method discussed in \ref{sec:productspace}. In 2007 Eckstein and Svaiter \cite{eckstein2009general} described this as \emph{Spingarn's method}, referencing Spingarn's 1983 article \cite{spingarn1983partial}. They also established a general projective framework for such problems which does not require reducing the problem to the case $N=2$.
	
	\section{Convex Setting}\label{convex_setting}
	
	Throughout the rest of the exposition, we will take the Douglas--Rachford operator and sequence to be as in \eqref{DRsequence}. Where no mention is made of the parameters $\lambda,\mu,\gamma$, it is understood that they are as in Definition~\ref{def:DRMethod}. While Theorems~\ref{thm:BCL}~and~\ref{thm:LionsandMercier} guarantee weak convergence for DR in the convex setting, only in finite dimensions is this sufficient to guarantee strong convergence. An important result of Hundal shows that AP may not converge in norm for the convex case when $H$ is infinite dimensional \cite{Hundal} (see also \cite{bauschke2005new,matouvskova2003hundal}). Although no analogue of Hundal's example seems known, to date, for DR in the infinite dimensional case norm convergence has been verified under additional assumptions on the nature of $A$ and $B$.
	
	\subsection{Convergence}\label{convex_convergence}
	
	Borwein, Li, and Tam \cite{BLT2015} attribute the first convergence rate results for DR to Hesse, Luke, and Patrick Neumann who in 2014 showed local linear convergence in the possibly nonconvex context of sparse affine feasibility problems \cite{hesse2014alternating}. Bauschke, Bello Cruz, Nghia, Phan, and Wang extended this work by showing that the rate of linear convergence of DR for subspaces is the cosine of the Friedrichs angle \cite{BCNPW}.
	
	In 2014, Bauschke, Bello Cruz, Nghia, Phan, and Wang \cite{bauschke2016optimal} used the convergence rates of matrices to find optimal convergence rates of DR for subspaces with more general averaging parameters as in \eqref{DRsequence}. In 2017, Mattias F\"{a}lt and Giselsson characterized the parameters that optimize the convergence rate in this setting \cite{falt2017optimal}.
	
	In 2014, Pontus Giselsson and Stephen Boyd demonstrated methods for preconditioning a particular class of problems with linear convergence rate in order to optimize a bound on the rate \cite{giselsson2014diagonal}.
	
	Motivated by the recent local linear convergence results in the possibly nonconvex setting \cite{HL13,hesse2014alternating,Phan,LP}, Borwein, Li, and Tam asked whether a global convergence rate for DR in finite dimensions might be found for a reasonable class of convex sets even when the regularity condition $\text{ri}A \cap \text{ri}B \ne \emptyset$ is potentially not satisfied. They provided some partial answers in the context of H\"{o}lder regularity with special attention given to convex semi-algebraic sets \cite{BLT2015}.
	
	Borwein, Sims, and Tam established sufficient conditions to guarantee norm convergence in the setting where one set is the positive Hilbert cone and the other set a closed affine subspace which has finite codimension \cite{BST2015}.
	
	In 2015, Bauschke, Dao, Noll, and Phan studied the setting of $\mathbb{R}^2$ where one set is the epigraph of a convex function and the other is the real axis, obtaining various convergence rate results \cite{BDNP16a}. In their follow-up article in 2016, they demonstrated finite convergence when Slater's condition holds in both the case where one set is an affine subspace and the other a polyhedron and in the case where one set is a hyperplane and the other an epigraph \cite{BDNP16b}. They included an analysis of the relevance of their results in the product space setting of Spingarn \cite{spingarn1983partial} and numerical experiments comparing the performance of DR and other methods for solving linear equations with a positivity constraint. In the same year, Bauschke, Dao, and Moursi provided a characterization of the behaviour of the sequence $(T^n x - T^n y)_{n \in \mathbb{N}}$ \cite{BDM2015}.
	
	In 2015, Damek Davis and Wotao Yin showed that DR might converge arbitrarily slowly in the infinite dimensional setting \cite{davis2016convergence}.
	
	\subsubsection{Order of operators}
	
	In 2016, Bauschke and Moursi investigated the order of operators: $T_{A,B}$ vs. $T_{B,A}$. In so doing, they demonstrated that $R_A: \Fix T_{A,B} \rightarrow \Fix T_{B,A}$ and $R_B: \Fix T_{B,A} \rightarrow \Fix T_{A,B}$ are bijections \cite{HMorder}.
	
	\subsubsection{Best approximations and the possibly infeasible case}
	
	The behaviour of DR in the inconsistent setting is most often studied using the minimal displacement vector
	\begin{equation}\label{eqn:displacementvector}
	v:=P_{\overline{\text{ran}}(\Id-T_{A,B})}0.
	\end{equation}
	The set of best approximation solutions relative to $A$ is $A\cap(v+B)$; when it is nonempty, the following have also been shown.
	
	In 2004 Bauschke, Combettes, and Luke considered the algorithm under the name \emph{averaged alternating reflections (AAR)}. They demonstrated that in the possibly inconsistent case, the shadow sequence $P_A x_n$ remains bounded with its weak sequential cluster points being in $A\cap(v+B)$ \cite{DRConvergence}.
	
	In 2015, Bauschke and Moursi \cite{HMaffine} analysed the more specific setting of two affine subspaces, showing that $P_A x_n$ will converge to a best approximation solution. In 2016, Bauschke, Minh Dao, and Moursi \cite{BDMaffine} furthered this work by considering the affine-convex setting, showing that when one of $A$ and $B$ is a closed affine subspace $P_A x_n$ will converge to a best approximation solution. They then applied their results to solving the least squares problem of minimizing $\sum_{k=1}^M d_{C_k}(x)^2$ with Spingarn's splitting method \cite{spingarn1983partial}.
	
	In 2016, Bauschke and Moursi provided a more general sufficient condition for the weak convergence \cite{BMshadow}, and in 2017 they characterized the magnitudes of minimal displacement vectors for more general compositions and convex combinations of operators.
	
	\subsubsection{Nearest feasible points (Arag\'{o}n Artacho-Campoy Method)}
	
	In 2017, Arag\'{o}n Artacho and Campoy introduced what they called the \emph{Averaged Alternating Modified Reflections (AAMR)} method for finding the nearest feasible point for a given starting point \cite{FranNewMethod}. The operator and method are defined with parameters $\alpha, \beta \in ]0,1[$ by
	\begin{align}
	T_{A,B,\alpha,\beta} &:= (1-\alpha)\Id + \alpha(2\beta P_B-\Id)(2\beta P_A -\Id) \nonumber\\
	x_n&:=T_{A-q,B-q,\alpha,\beta} x_n, n=0,1,\dots \label{AAMR}
	\end{align}
	which we recognize as DR in the case $\alpha=1/2,\beta=1,q=0$. The convergence result is as follows.
	\begin{theorem}{Arag\'{o}n Artacho \& Campoy 2017, \cite[Theorem 4.1]{FranNewMethod}}
		Given $A,B$ closed and convex, $\alpha,\beta \in ]0,1[$, and $q \in H$, choose any $x_0 \in H$. Let $(x_n)_{n \in \mathbb{N}}$ be as in \eqref{AAMR}. Then if $A\cap B \neq \emptyset$ and $q-P_{A\cap B}(q) \in (N_A+N_B)(P_{A\cap B}(q))$ then the following hold:
		\begin{enumerate}
			\item $(x_n)_{n \in \mathbb{N}}$ is weakly convergent to a point $x \in \Fix T_{A-q,B-q,\alpha,\beta}$ such that $P_A(q+x) = P_{A\cap B}(q)$;
			\item $(x_{n+1}-x_n)_{n \in \mathbb{N}}$ is strongly convergent to $0$;
			\item $(P_A(q+x_n))_{n \in \mathbb{N}}$ is strongly convergent to $P_{A\cap B}(q)$.
		\end{enumerate}
		Otherwise $\|x_n\|\rightarrow \infty$. Moreover, if $A,B$ are closed affine subspaces, $A\cap B \neq \emptyset$, and $q-P_{A\cap B}(q) \in (A-A)^\perp + (B-B)^\perp$ then $(x_n)_{n \in \mathbb{N}}$ is strongly convergent to $P_{\Fix T_{A-q,B-q,\alpha,\beta}}(x_0)$.
	\end{theorem}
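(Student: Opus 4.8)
The plan is to recognise $T := T_{A-q,B-q,\alpha,\beta}$ as an $\alpha$-averaged nonexpansive operator, to pin down $\Fix T$ in terms of the geometry of $A\cap B$ and $q$, and then to run the usual Fej\'er-monotonicity/Opial machinery, but with one extra ingredient --- a sharpened firm-nonexpansivity estimate for the \emph{modified} reflections --- that produces the strong convergence of the shadows in part (3). \emph{Step 1 (averagedness).} For closed convex $C$ write $2\beta P_C-\Id=\beta(2P_C-\Id)+(1-\beta)(-\Id)$, a convex combination (since $\beta\in ]0,1[$) of the nonexpansive map $2P_C-\Id$ and the isometry $-\Id$; hence $2\beta P_C-\Id$ is nonexpansive, so is $R_{B-q,\beta}R_{A-q,\beta}$, and $T=(1-\alpha)\Id+\alpha R_{B-q,\beta}R_{A-q,\beta}$ is $\alpha$-averaged. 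Using the firm nonexpansivity of $P_C$ one gets the stronger inequality
\[
\|(2\beta P_C-\Id)x-(2\beta P_C-\Id)y\|^2\ \le\ \|x-y\|^2-4\beta(1-\beta)\|P_Cx-P_Cy\|^2,
\]
which is where $\beta<1$ pays off.

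\emph{Step 2 (the fixed-point set).} Since $\alpha\neq0$, $x\in\Fix T$ iff $R_{B-q,\beta}R_{A-q,\beta}x=x$, and, writing $p:=P_{A-q}(x)$ and unpacking both projections by their variational inequalities, this holds iff $p=P_{A-q}(x)$ and $p=P_{B-q}(2\beta p-x)$. In that case $\bar p:=p+q\in A\cap B$, and adding the two normal-cone inclusions (translating, $N_{A-q}(p)=N_A(\bar p)$, etc.) gives $2(1-\beta)(q-\bar p)\in N_A(\bar p)+N_B(\bar p)$. As normal cones are cones the positive factor $2(1-\beta)$ drops out, so $q-\bar p\in(N_A+N_B)(\bar p)\subseteq N_{A\cap B}(\bar p)$, which by the variational characterisation of the projection onto the convex set $A\cap B$ forces $\bar p=P_{A\cap B}(q)$. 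Thus every fixed point satisfies $P_A(q+x)=P_{A-q}(x)+q=\bar p=P_{A\cap B}(q)$, and $\Fix T\neq\emptyset$ implies the stated condition. Conversely, given the condition, pick $u\in N_A(\bar p)$, $w\in N_B(\bar p)$ with $u+w=q-\bar p$ and check directly that $x:=(\bar p-q)+2(1-\beta)u$ is a fixed point (again the cone property: $2(1-\beta)u\in N_A(\bar p)$, $2(1-\beta)w\in N_B(\bar p)$). Hence $\Fix T\neq\emptyset$ \emph{iff} the condition holds, and then $P_{A-q}$ is constant on $\Fix T$, equal to $p^\star:=P_{A\cap B}(q)-q$.

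\emph{Step 3 (convergence when $\Fix T\neq\emptyset$).} Fix $x^\star\in\Fix T$; by Step 2, $P_{A-q}(x^\star)=p^\star$ and $P_{B-q}(2\beta p^\star-x^\star)=p^\star$. With $S:=R_{B-q,\beta}R_{A-q,\beta}$, $x_{n+1}=(1-\alpha)x_n+\alpha Sx_n$, $p_n:=P_{A-q}(x_n)$, $b_n:=P_{B-q}(2\beta p_n-x_n)$, the identity $\|(1-\alpha)a+\alpha b\|^2=(1-\alpha)\|a\|^2+\alpha\|b\|^2-\alpha(1-\alpha)\|a-b\|^2$ together with the Step 1 estimate applied to $R_{A-q,\beta}$ and then $R_{B-q,\beta}$ yields
\[
\|x_{n+1}-x^\star\|^2\ \le\ \|x_n-x^\star\|^2-4\alpha\beta(1-\beta)\bigl(\|p_n-p^\star\|^2+\|b_n-p^\star\|^2\bigr)-\tfrac{1-\alpha}{\alpha}\|x_{n+1}-x_n\|^2.
\]
Telescoping gives simultaneously: $(x_n)$ is Fej\'er-monotone w.r.t. $\Fix T$, hence bounded; $\sum\|x_{n+1}-x_n\|^2<\infty$, so $x_{n+1}-x_n\to0$ (part (2)); and $\sum\|p_n-p^\star\|^2<\infty$, so $p_n\to p^\star$ in norm, i.e. $P_A(q+x_n)=p_n+q\to P_{A\cap B}(q)$ (part (3)). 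For part (1): asymptotic regularity $x_{n+1}-x_n=\alpha(Sx_n-x_n)\to0$, demiclosedness of $\Id-S$ at $0$ (valid for nonexpansive $S$), and Fej\'er-monotonicity give, via Opial's lemma, $x_n\rightharpoonup x\in\Fix T$, and by Step 2 $P_A(q+x)=P_{A\cap B}(q)$.

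\emph{Step 4 (remaining cases, and the main difficulty).} If the condition fails --- in particular if $A\cap B=\emptyset$ --- then by Step 2 $\Fix T=\emptyset$; a bounded orbit of the averaged $T$ would, by asymptotic regularity and demiclosedness exactly as above, cluster weakly at a fixed point, so the orbit is unbounded, and one upgrades this to $\|x_n\|\to\infty$ by analysing the infimal displacement vector $v=P_{\overline{\text{ran}}(\Id-T)}0\neq0$ and the Pazy-type behaviour $\tfrac1n(x_0-x_n)\to v$ --- the analogue of Eckstein and Bertsekas's divergence diagnostic for DR quoted earlier. I expect \emph{this dichotomy to be the main obstacle}: the soft Fej\'er/Opial machinery only separates ``bounded, hence convergent'' from ``unbounded,'' and turning ``unbounded'' into ``$\|x_n\|\to\infty$'' needs the quantitative theory of the minimal displacement vector. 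Finally, when $A,B$ are closed affine subspaces, $N_A(\bar p)=(A-A)^\perp$ and $N_B(\bar p)=(B-B)^\perp$ are fixed subspaces independent of $\bar p$, so the condition $q-P_{A\cap B}(q)\in(A-A)^\perp+(B-B)^\perp$ is precisely the Step 2 condition, and $T$ is then an \emph{affine} nonexpansive averaged operator; one invokes the classical fact that orbits of such an operator converge in norm to the projection of $x_0$ onto $\Fix T$ (its linear part has peripheral spectrum $\{1\}$ with no tangential spectral mass), which also re-derives the strong shadow convergence of part (3) and is in the spirit of Bauschke and Moursi's analysis \cite{HMaffine} of DR for two affine subspaces.
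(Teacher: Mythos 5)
This theorem is quoted in the survey from Arag\'on Artacho and Campoy \cite{FranNewMethod} without proof, so there is no in-paper argument to compare against; judged on its own terms, your Steps 1--3 are correct and essentially the standard route (and, as far as one can tell, the spirit of the original): the identity $2\beta P_C-\Id=\beta(2P_C-\Id)+(1-\beta)(-\Id)$ and the sharpened estimate with the $4\beta(1-\beta)\|P_Cx-P_Cy\|^2$ term both check out, the fixed-point characterisation (adding the two normal-cone inclusions to get $2(1-\beta)(q-\bar p)\in N_A(\bar p)+N_B(\bar p)$, dividing out the positive factor, and the converse construction $x=(\bar p-q)+2(1-\beta)u$) is correct, and the Fej\'er-type inequality you write down does follow from the averagedness identity plus two applications of the Step 1 estimate, yielding parts (1)--(3) cleanly.

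The genuine gap is in Step 4, and you partly flagged it yourself. Your proposed upgrade from ``unbounded'' to $\|x_n\|\to\infty$ rests on the claim that the minimal displacement vector $v=P_{\overline{\textrm{ran}}(\Id-T)}0$ is nonzero when $\Fix T=\emptyset$. That is false in general: $\inf_x\|x-Tx\|$ can be $0$ without being attained (already for plain DR with two disjoint convex sets that approach each other asymptotically), in which case $v=0$ and the Pazy/Baillon--Bruck--Reich limit $\tfrac1n(x_0-x_n)\to v$ gives no information about $\|x_n\|$. The correct mechanism is different: $T$ is averaged, hence strongly nonexpansive in the sense of Bruck and Reich, and for a strongly nonexpansive operator with empty fixed point set one has $\|T^nx_0\|\to\infty$ for every starting point; this is the result one should invoke (it is also, in substance, what the original authors rely on), and it does not require $v\neq 0$. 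Separately, in the affine-subspace case your justification of the ``classical fact'' via the linear part having ``peripheral spectrum $\{1\}$ with no tangential spectral mass'' is not a valid argument, since the linear part need not be normal and has no spectral measure to appeal to. The conclusion itself is fine, but it should be supported either by the Baillon--Bruck--Reich theorem that an odd (in particular linear, after translating so that a fixed point is at the origin) nonexpansive asymptotically regular map has strongly convergent iterates, or by the two-subspace (principal-angle) decomposition which reduces $T$ to a direct sum of two-dimensional blocks; with one of these in place of the spectral hand-wave, the affine case and the strong convergence to $P_{\Fix T_{A-q,B-q,\alpha,\beta}}(x_0)$ follow as you say.
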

	The algorithm may be thought of as another approach to the convex optimization problem of minimizing the convex function $y\mapsto \|q-y\|^2$ subject to constraints on the solution. 
	
	It is quite natural to consider the theory of the algorithm in the case where projection operators $P_A = J_{N_A},P_B = J_{N_B}$ are replaced with more general resolvents for maximally monotone operators \cite{artacho2018computing}, an extension Arag\'{o}n Artacho and Campoy gave in 2018. This work has already been extended by Alwadani, Bauschke, Moursi, and Wang, who analysed the asymptotic behaviour and gave the algorithm the more specific name of \emph{Arag\'{o}n Artacho-Campoy Algorithm (AACA)} \cite{Alwadani}.
	
	\subsubsection{Cutter methods}
	
	\begin{figure}[h]
		\begin{center}
			\subfloat[DR with cutters]{\begin{tikzpicture}[scale=2.0]
				%PLOT SMOOTHLY
				%\draw plot [smooth] coordinates {(1.5,0) (.5,.5) (1,1)};
				%SHADE THE SET A
				\fill[cyan,fill opacity=0.4] (1,1) to (1.6,0)
				to [out=180,in=270] (.5,.5)
				to [out=90,in=205] cycle;
				\node at (1.1,.2) {$A$};
				
				%SHADE THE SET B INTERSECT A
				\fill[purple,fill opacity=0.4] (1.42,.3) to (1.2,.3) to (1.3,.5) to cycle;
				
				%SHADE THE SET B
				\fill[red,fill opacity=0.4] (1.42,.3) to (1.3,.5) to (1.5,.9) to [out=60,in=135] (1.9,.9) to [out=315,in=0] (1.5,.3) to cycle;
				\node [] at (1.6,.6) {$B$}; 					
				
				%OUTLINE THE SET
				%\draw[black] (1,1) to (1.5,0)
				%to [out=180,in=270] (.5,.5)
				%to [out=90,in=205] cycle;
				
				%DRAW H_A which has slope 2
				\draw [gray, dashed] (.625,1.25) -- (0,0);
				\node [above right] at (.625,1.25) {$H_A$};
				
				%DRAW x, PSx, and RSx
				\draw [fill,black] (-.25,.75) circle [radius=0.015];	
				\node [left] at (-.25,.75) {$x$};
				\draw [black,->] (-.2,.725) -- (.2,.525);
				\draw [fill,black] (.25,.5) circle [radius=0.015];
				\node [left] at (.25,.45) {$P_Ax$};
				\draw [blue,->] (.3,.475) -- (.7,.275);
				\draw [fill,blue] (.75,.25) circle [radius=0.015];	
				\node [left, blue] at (.75,.2) {$R_Ax$};

				%DRAW RBx
				\draw [blue,->] (1.05,.55) -- (1.2,.7);
				\draw [fill,blue] (1.25,.75) circle [radius=0.015];
				\node [above,blue] at (1.25,.75) {$R_B R_A x$};
				\draw [black,->] (.8,.3) -- (.95,.45);
				\draw [fill,black] (1,.5) circle [radius=0.015];
				\draw [blue,->] (1.05,.55) -- (1.2,.7);
				\node [right] at (1,.5) {$P_B R_A x$};
				
				%DRAW H_B which has slope 1
				\draw [gray, dashed] (.25,1.25) -- (1.6,-.1);
				\node [above left] at (.25,1.25) {$H_B$};
				
				%DRAW my averaging step
				\draw [purple, <->] (-.2,.75) -- (1.2,.75);
				\draw [fill,purple] (.5,.75) circle [radius=0.015];
				\node [below,purple] at (.5,.75) {$T_{A,B}x$};
				
				\end{tikzpicture}\label{fig:cuttersleft}}\hspace*{0.5\fill}
			\subfloat[Subgradient projectors may not be nonexpansive]{\begin{tikzpicture}[scale=0.66]
				\draw [black] (-.5,0) -- (4,0);
				\draw [blue] (-.5,-.25) -- (2,1);
				\draw [blue] (2,1) -- (3,2);
				\draw [blue] (3,2) -- (3.5,4);
				
				%DRAW X2
				\draw [fill,purple] (3.25,0) circle [radius=0.05];
				\node [below,purple] at (3.35,0) {$x_2$};
				\draw [red,->] (3.25,.1) -- (3.25,2.8);
				\draw [red,->] (3.25,2.9) -- (2.55,.1);					
				\draw [fill,red] (2.5,0) circle [radius=0.05];
				\node [below, red] at (2.2,0) {$P_f x_2$};			
				
				%DRAW X1 behavior
				\draw [fill,purple] (2.75,0) circle [radius=0.05];
				\node [below,purple] at (2.85,0) {$x_1$};				
				\draw [red,->] (2.75,.1) -- (2.75,1.65);
				\draw [red,->] (2.75,1.7) -- (1.1,.1);			
				\draw [fill,red] (1,0) circle [radius=0.05];
				\node [below,red] at (.9,0) {$P_f x_1$};							
				\end{tikzpicture}\label{fig:cutterscenter}}\hspace*{\fill}
			\subfloat[A fixed point of subgradient DR]{\begin{adjustbox}{trim=0.3cm 0.5cm 0cm 0.8cm,clip=true}
					\begin{tikzpicture}[scale=1]
					\node[anchor=south west,inner sep=0] (image) at (0,0) {\includegraphics[width=.29\textwidth]{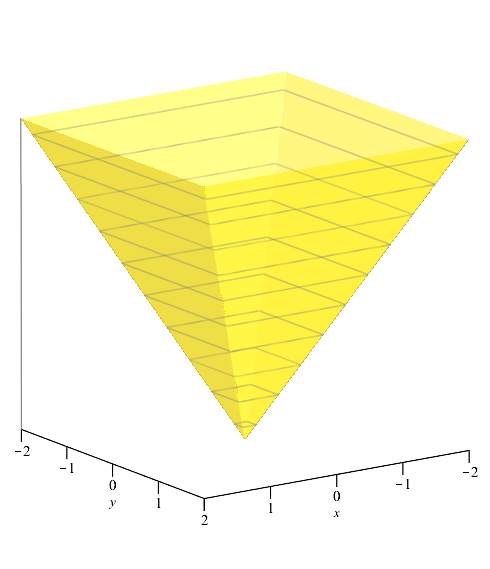}};
					\begin{scope}[x={(image.south east)},y={(image.north west)}]
					%DRAW MY POINT X0
					\draw [fill,red] (.375,.175) circle [radius=0.01];
					\node [above left,red] at (.375,.175) {$x_0$};
					
					%DRAW P_Ax and arrows
					\draw [fill,red] (.581,.205) circle [radius=0.01];
					%vertical line
					\draw [red,->] (.375,.21) -- (.375,.6);
					%tangent line down from left
					\draw [red,->] (.396,.561) -- (.56,.245);
					
					%DRAW R_Ax
					\draw [fill,red] (.787,.235) circle [radius=0.01];
					%tangent line down from right
					\draw [red,->] (.622,.211) -- (.746,.229);
					
					%DRAW P_Bx and arrows
					\draw [blue] (.787,.275) -- (.787,.555);
					\draw [cyan,dashed,->] (.787,.555) -- (.787,.665);
					\draw [cyan,dashed] (.766,.619) -- (.684,.435);
					\draw [blue,->] (.684,.435) -- (.602,.251);
					
					%DRAW R_Bx arrow
					\draw [blue,->] (.539,.199) -- (.416,.181);
					
					\end{scope}
					\end{tikzpicture}
				\end{adjustbox}\label{fig:cuttersright}}
		\end{center}
		\caption{Cutter methods}\label{fig:cutters}
	\end{figure}
	Another computational approach is to replace true projections with approximate projections or \emph{cutter} projections onto separating hyperplanes as in Figure~\ref{fig:cuttersleft}. Prototypical of this category are subgradient projection methods which may be used to find $x \in \cap_{i=1}^m \text{lev}_{\leq 0}f_i$ for $m$ convex functions $f_1,\dots,f_m$; see Figure~\ref{fig:cutterscenter}. Such methods are not generally nonexpansive (as shown in \ref{fig:cutterscenter}) but may be easier to compute. When true reflection parameters are allowed, the method is no longer immune from ``bad'' fixed points, as illustrated in Figure~\ref{fig:cuttersright}. However, with a suitable restriction on reflection parameters and under other modest assumptions, convergence may be guaranteed through Fej\'{e}r monotonicity methods. See, for example, the works of Cegielski and Fukushima \cite{Cegielski,Fukushima}. More recently, D\'{i}az Mill\'{a}n, Lindstrom, and Roshchina have provided a standalone analysis of DR with cutter projections \cite{DLR18}.
	
	\subsection{Notable Applications}\label{convex_applications}
	
	While the Douglas--Rachford operator is firmly nonexpansive in the convex setting, the volume of literature about it is expansive indeed. While reasonable brevity precludes us from furnishing an exhaustive catalogue, we provide a sampling of the relevant literature.
	
	As early as 1961, working in the original context of Douglas and Rachford, P.L.T. Brian introduced a modified version of DR for high order accuracy solutions of heat flow problems \cite{brian1961finite}.
	
	In 1995, Fukushima applied DR to the traffic equilibrium problem, comparing its performance (and the complexity and applicability of the induced algorithms) to ADMM \cite{fukushima1996primal}.
	
	In 2007, Combettes and Jean-Christophe Pesquet applied a DR splitting to nonsmooth convex variational signal recovery, demonstrating their approach on image denoising problems \cite{combettes2007douglas}.
	
	In 2009, Simon Setzer showed that the \emph{Alternating Split Bregman Algorithm} from \cite{goldstein2009split} could be interpreted as a special case of DR in order to interpret its convergence properties, applying the former to an image denoising problem \cite{setzer2009split}. In the same year, Gabriele Steidl and Tanja Teuber applied DR for removing multiplicative noise, analysing its linear convergence in their context and providing computational examples by denoising images and signals \cite{steidl2010removing}.
	
	In 2011 Combettes and Jean-Christophe Pesquet contrasted and compared various proximal point algorithms for signal processing \cite{combettes2011proximal}.
	
	In 2012 Laurent Demanet and Xiangxiong  Zhang applied DR to $l_1$ minimization problems with linear constraints, analysing its convergence and bounding the convergence rate in the context of compressed sensing \cite{demanet2016eventual}.		
	
	In 2012, Radu Ioan Bo\c{t}, and Christopher Hendrich proposed two algorithms based on Douglas--Rachford splitting, which they used to solve a generalized Heron problem and to deblur images \cite{bot2013douglas}. In 2014, they analysed with Ern\"{o} Robert Csetnek an \emph{inertial} DR algorithm and used it to solve clustering problems \cite{boct2015inertial}. 
	
	In 2015, Bauschke, Valentin Koch, and Phan applied DR for a road design optimization problem in the context of minimizing a sum of proper convex lsc functions, demonstrating its effectiveness on real world data \cite{bauschke2015stadium}.%each of the f in their minimization problem are convex and so I think each of their subdifferentials are max mon
	
	In 2017, Fei Wang, Greg Reid, and Henry Wolkowicz applied DR with facial reduction for a set of matrices of a given rank and a linear constraint set in order to find maximum rank moment matrices \cite{wang2017finding}. %from the article: "We consider the Douglas-Rachford reflection-projection (DR) method which involves projecting and reflections between the two convex sets. These two convex sets are the linear constraints and the face in our case"
	
	\section{Non-convex Setting}\label{nonconvex}
	
	Investigation in the nonconvex setting has been two-pronged, with the theoretical inquiry chasing the experimental discovery. The investigation has also taken place in, broadly, two contexts: that of curves and/or hypersurfaces and that of discrete sets.
	
	While Jonathan Borwein's exploration spanned both of the aforementioned contexts, his interest in DR appears to have been initially sparked by its surprising performance in the latter \cite{Elseremail}, specifically the application of the method by Elser, Rankenburg, and Thibault to solving a wide variety of combinatorial optimization problems, including Sudoku puzzles \cite{ERT07}. Where the product space reformulation is applied to feasibility problems with discrete constraint sets, DR often succeeds while AP does not. Early wisdom suggested that one reason for its superior performance is that DR, unlike AP, is immune from false fixed points regardless of the presence or absence of convexity, as shown in the following proposition (see, for example, \cite[Proposition 1.5.1]{Tam} or \cite{ERT07}).
	\begin{proposition}[Fixed points of DR]\label{prop:fixedpoint} Let $A,B \subset H$ be proximal. Then $x \in \Fix T_{A,B}$ implies $P_A(x) \in A \cap B$. 
	\end{proposition}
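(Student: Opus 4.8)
The plan is to unwind the definition of $T_{A,B}$ and observe that the averaging step does nothing at a fixed point, so that the fixed-point equation collapses to a statement about the composition of reflections alone. Concretely, suppose $x \in \Fix T_{A,B}$. Since $T_{A,B} = \frac{1}{2}\left(I + R_B R_A\right)$, the condition $x \in T_{A,B}(x)$ (equivalently $x = T_{A,B}x$ for the selector fixed in the preliminaries) gives $2x = x + R_B R_A x$, that is, $x = R_B R_A x$. Thus a fixed point of the reflect--reflect--average operator is precisely a fixed point of the Peaceman--Rachford operator $R_B R_A$.

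Next I would introduce the point $p := P_A x$, which exists because $A$ is proximal, and note that $p \in A$ and $R_A x = 2P_A x - x = 2p - x$. Substituting into $x = R_B R_A x = R_B(2p - x) = 2 P_B(2p - x) - (2p - x)$ and simplifying yields $2P_B(2p-x) = 2p$, hence $P_B(2p - x) = p$. Since $P_B$ takes values in $B$, this forces $p \in B$. Combining, $P_A(x) = p \in A \cap B$, which is the assertion.

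There is essentially no hard step here: the argument is a two-line computation once one notices that the $\tfrac{1}{2}(I + \cdot\,)$ averaging is transparent to fixed points. The only points meriting a word of care are (i) that proximality of $A$ and $B$ is exactly what guarantees that the projections $P_A x$ and $P_B(2p - x)$ are nonempty, so that $p$ and the identity $P_B(2p-x)=p$ make sense; and (ii) that when $P_A, P_B$ (hence $T_{A,B}$) are regarded as set-valued, one simply works with the selectors $P_A, P_B$ fixed earlier, leaving the computation unchanged. It is worth emphasising, by contrast with AP, that no convexity of $A$ or $B$ enters anywhere, which is the precise sense in which DR is ``immune from false fixed points.''
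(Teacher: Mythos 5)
Your proposal is correct and is essentially the paper's own argument: the paper writes the fixed-point equation as $x = x + P_B(2P_A(x)-x) - P_A(x)$ and cancels to get $P_B(2P_A(x)-x) = P_A(x) \in B$, which is exactly your computation after you expand $x = R_B R_A x$ via $p = P_A x$. The remarks on proximality guaranteeing nonempty projections and on working with selectors match the paper's setup, so nothing further is needed.
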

	\begin{proof}
		Let $x \in \Fix T_{A,B}$. Then $x = x+P_B(2P_A(x)-x)-P_A(x)$ and so $P_B(2P_A(x)-x)-P_A(x) =0$, so $P_A(x) \in B$.
	\end{proof}
	A typical example where $A:=\{a_1,a_2\}$ is a doubleton and $B$ a subspace (analogous to the agreement set) is illustrated in Figure~\ref{fig:DRvsAP} where DR is seen to solve the problem while AP becomes trapped by a fixed point.
	\begin{figure}[ht]
		\begin{center}
			\subfloat[DR]{
				\begin{tikzpicture}
				[scale=0.95]
				\node[anchor=south west,inner sep=0] (image) at (0,0) {\includegraphics[width=.3\linewidth,angle=90]{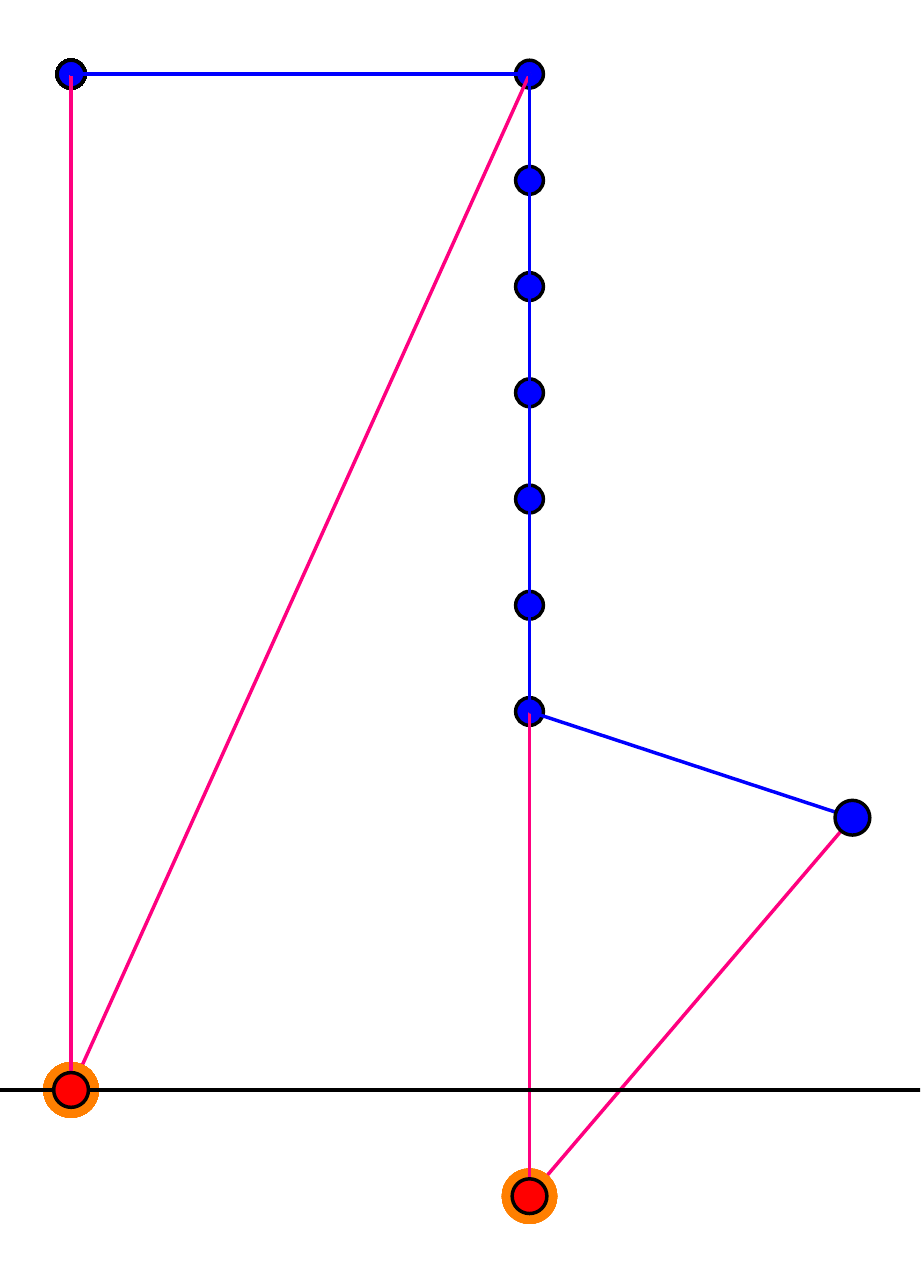}};
				\begin{scope}[x={(image.south east)},y={(image.north west)}]
				
				%DRAW x0
				%\draw [fill,red] (.64,.92) circle [radius=.075cm];
				\node [left,blue] at (.64,.92) {$x_0$};
				
				%DRAW x1
				%\draw [fill,red] (.56,.58) circle [radius=.075cm];
				\node [below,blue] at (.56,.58) {$x_1$};
				
				%DRAW x7
				%\draw [fill,red] (.06,.58) circle [radius=.075cm];
				\node [above,blue] at (.06,.58) {$x_7$};	
				
				%DRAW x8
				%\draw [fill,red] (.06,.08) circle [radius=.075cm];
				\node [below right,blue] at (0.0,.08) {$x_n \; \text{for}\; n\geq 8$};	
				
				%DRAW PBx8
				%\draw [fill,red] (.85,.08) circle [radius=.075cm];
				\node [below left,orange] at (0.85,.08) {$P_A x_n \; \text{for}\; n\geq 8$};
				
				%DRAW b1
				%\draw [fill,red] (.85,.08) circle [radius=.075cm];
				\node [right,red] at (0.87,.08) {$a_2$};
				
				%DRAW b2
				%\draw [fill,red] (.85,.08) circle [radius=.075cm];
				\node [below,red] at (0.95,.56) {$a_1$};
				
				%label H
				\node [right] at (0.85,.8) {$B$};
				
				\end{scope}
				\end{tikzpicture}\label{fig:DRvsAPleft}}\hspace{1cm}
			\subfloat[AP]{
				\begin{tikzpicture}
				[scale=0.95]
				\node[anchor=south west,inner sep=0] (image) at (0,0) {\includegraphics[width=.3\linewidth,angle=90]{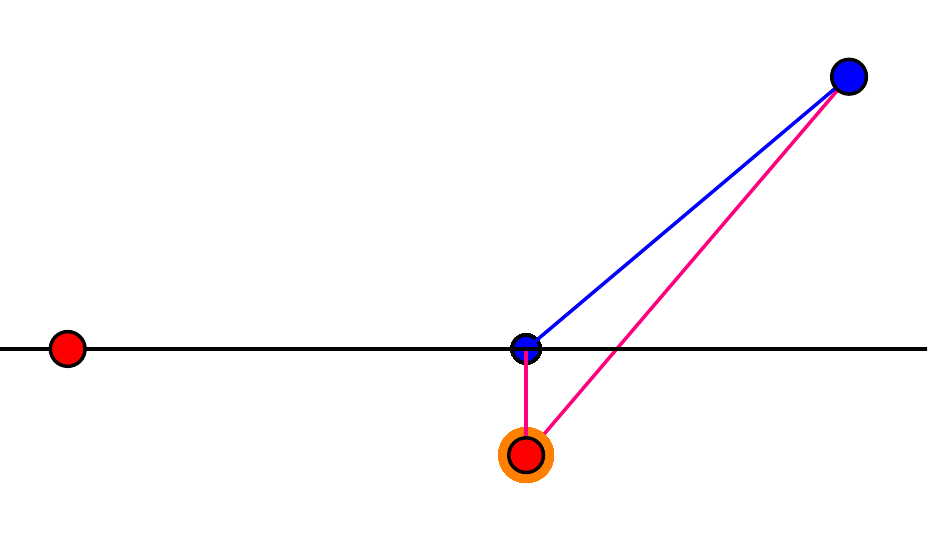}};
				\begin{scope}[x={(image.south east)},y={(image.north west)}]
				
				%I label B
				\node [right] at (0.65,.8) {$B$};
				
				%I label x0
				\node [left,blue] at (0.1,.9) {$x_0$};
				
				%I label xn
				\node [left,blue] at (0.65,.55) {$x_n \; \text{for}\; n\geq 1$};
				
				%I label a1
				\node [below right,red] at (0.75,.55) {$a_1$};
				
				%I label a2
				\node [right,red] at (0.65,.09) {$a_2$};
				
				%I am putting this node here to scoot the picture up
				\node [below left,orange] at (0.85,.07) {$\textcolor{white}{R}$};
				\end{scope}
				\end{tikzpicture}}
		\end{center}
		\caption{DR and AP for a doubleton and a line in $\mathbb{R}^2$}\label{fig:DRvsAP}
	\end{figure}
	
	If the germinal work on DR in the nonconvex setting is that of Elser, Rankenburg, and Thibault \cite{ERT07} (caution: the role of $A$ and $B$ are reversed from those here), then the seminal work is that of J.R. Fienup who applied the method to solve the phase retrieval problem \cite{fienup1982phase}. In \cite{ERT07}, Elser et al. referred to DR as \emph{Fienup's iterated map} and the \emph{difference map}, while Fienup himself called it the \emph{hybrid input-output algorithm} (HIO) \cite{fienup1982phase}. Elser explains that, originally, neither Fienup nor Elser et al. were aware of the work of Lions and Mercier \cite{LM}, and so the seminal work on DR in the nonconvex setting is, surprisingly, an independent discovery of the method \cite{Elseremail}. Fienup constructed the method by combining aspects of two other methods he considered---the \emph{basic input-output} algorithm and the \emph{output-output} algorithm---with the intention of obviating stagnation. Here again, one may think of the behaviour illustrated in Figure~\ref{fig:DRvsAP}. 
	\begin{figure}[ht]
		\hspace*{\fill}\subfloat[Convergence to a feasible point]{\includegraphics[width=.3\textwidth]{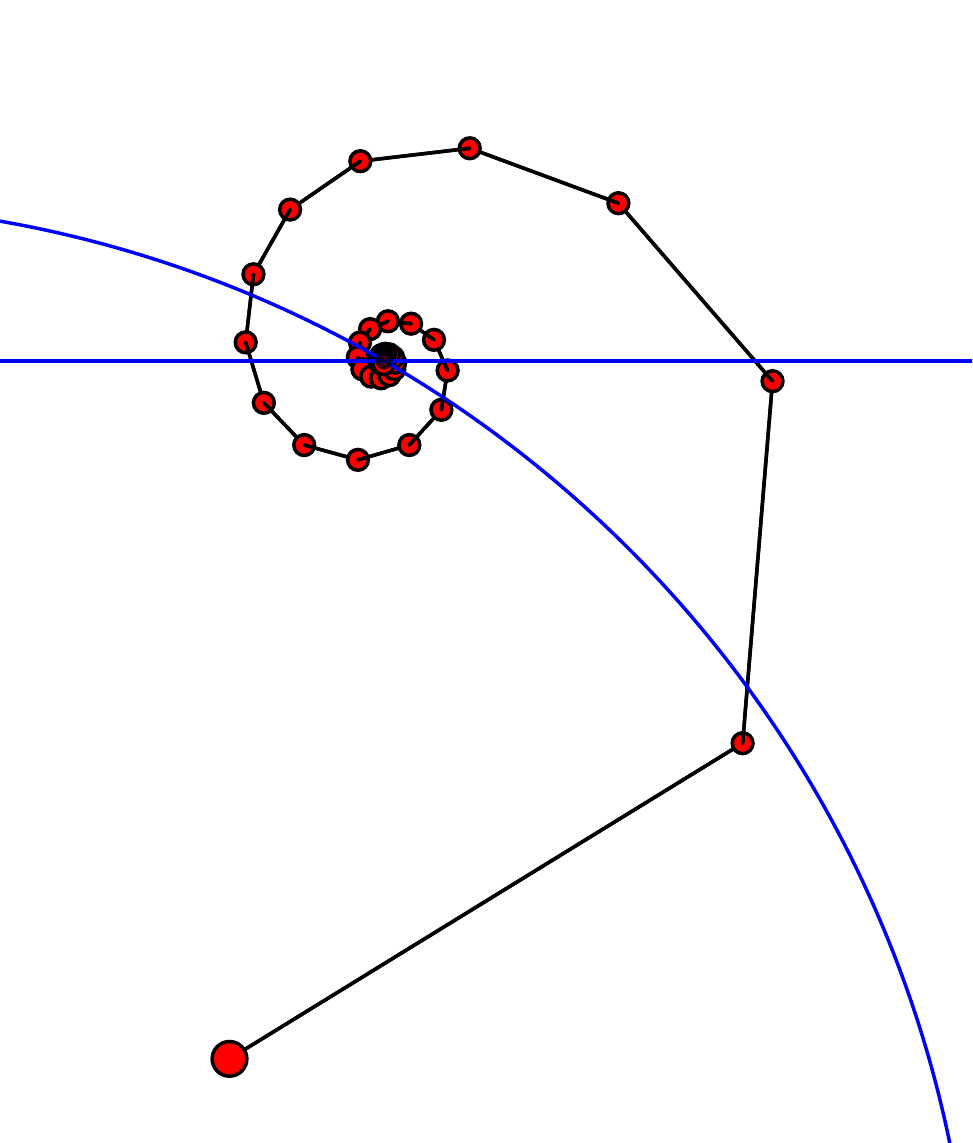}\label{fig:circleandlineleft}}\hspace*{\fill}\subfloat[Convergence to a fixed point]{\includegraphics[width=.3\textwidth]{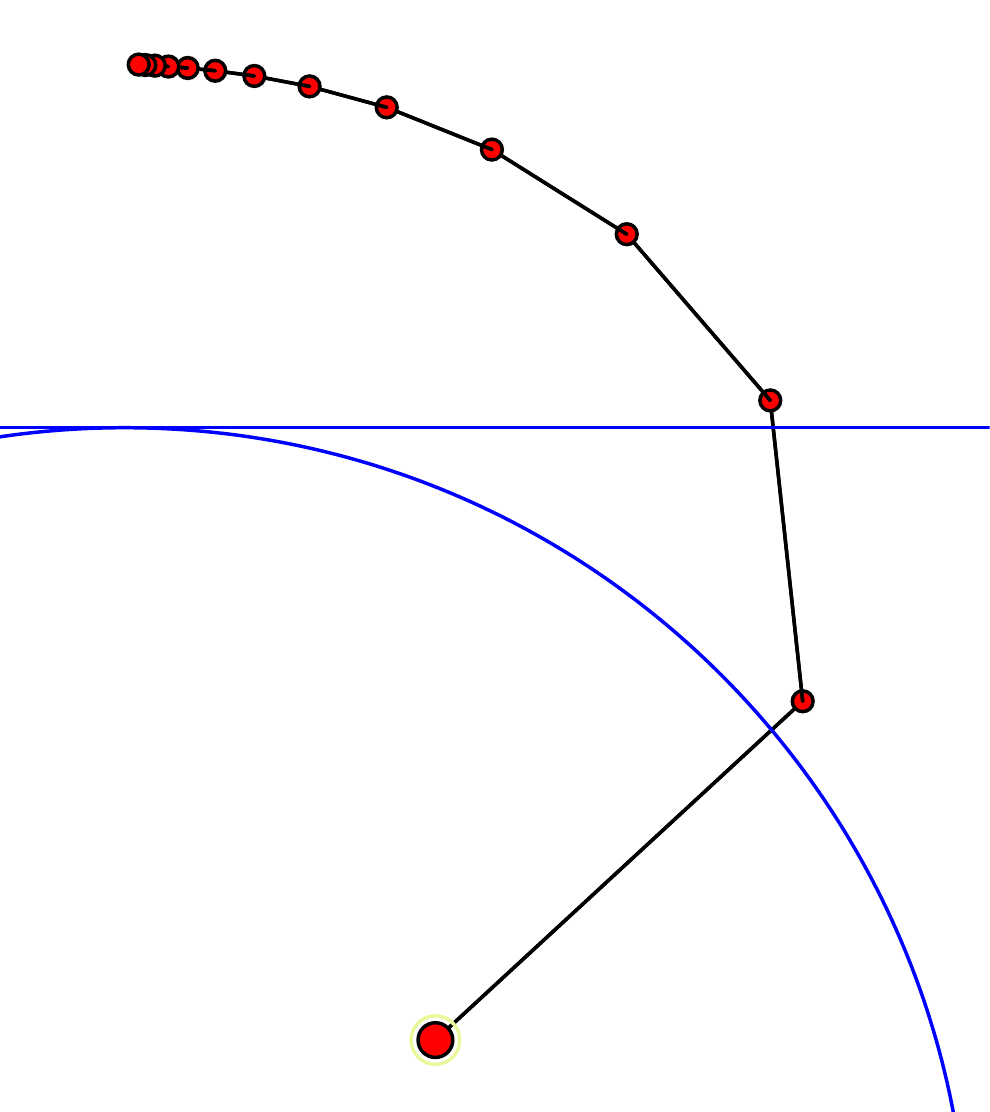}\label{fig:circleandlinecenter}}\hspace*{\fill}\subfloat[Proof of convergence with Benoist's Lyapunov function]{\includegraphics[width=.3\textwidth]{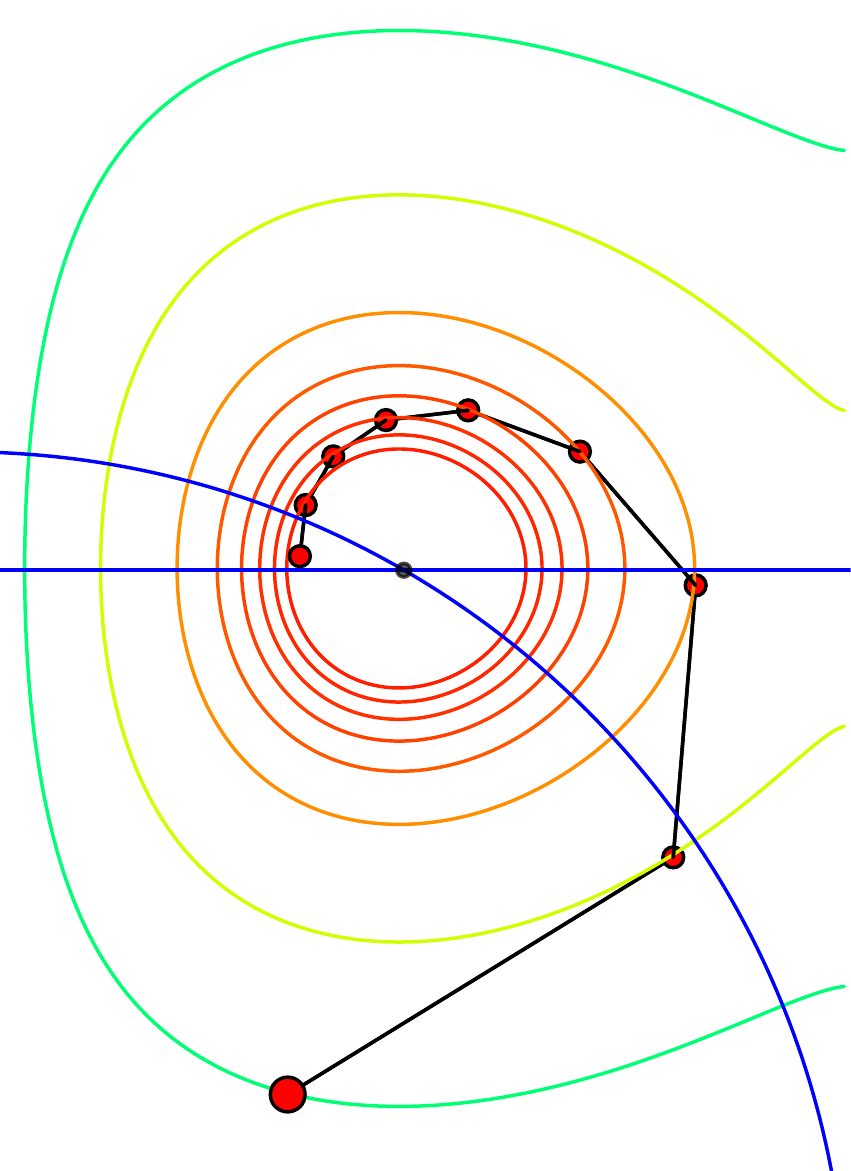}\label{fig:circleandlineright}}\hspace*{\fill}
		\caption{Behaviour of DR where $A$ is a circle and $B$ is a line; (c) is discussed in \ref{nonconvex_hypersurfaces}. }\label{fig:circleandline}
	\end{figure}
	
	Figure~\ref{fig:circleandline} shows behaviour of DR in the case where $A$ is a circle and $B$ is a line, a situation prototypical of the phase retrieval problem. For most arrangements, DR converges to a feasible point as in Figure~\ref{fig:circleandlineleft}. However, when the line and circle meet tangentially as in Figure~\ref{fig:circleandlinecenter}, DR converges to a fixed point which is not feasible, and the sequence $P_A x_n$ converges to the true solution.
	
	Elser notes that it is unclear whether or not Fienup understood that a fixed point of the algorithm is not necessarily feasible, as his approach was largely empirical. Elser sought to clarify this point in his follow up article in which he augmented the study of DR for phase retrieval by replacing support constraints with object histogram and atomicity constraints for crystallographic phase retrieval \cite[Section 5]{elser2003phase}. In 2001 when \cite{elser2003phase} was submitted, Elser was not yet aware of Lions' and Mercier's characterization of DR as the \emph{Douglas--Rachford method}; it may be recognized in \cite{elser2003phase} as a special instance of the \emph{difference map} (which we define in \eqref{difference_map}), a generalization of \emph{Fienup's input-output map}.
	
	In 2002, Bauschke, Combettes, and Luke finally demonstrated that Fienup's basic input-output algorithm is an instance of Dykstra's algorithm and that HIO (Hybrid Input--Output) with the support constraint alone corresponds to DR \cite{BCL} (see also their 2003 follow up \cite{bauschke2003hybrid}). In another follow up \cite{DRConvergence}, they showed that with support and nonnegativity constraints, HIO corresponds to the HPR (hybrid projection reflection) algorithm, a point Luke sought to clarify in his succinct 2017 summary of the investigation of DR in the context of phase retrieval \cite{luke2017phase}.
	
	More recently, in 2017, Elser, Lan, and Bendory have published a set of benchmark problems for phase retrieval \cite{elser2017benchmark}. They considered DR with true reflections and a relaxed averaging parameter---$\mu=\gamma=0,\lambda \in ]0,1]$ as in \eqref{DRsequence}---under the name \emph{relaxed-reflect-reflect (RRR)}. In particular, they provided experimental evidence for the exponential growth of DR's mean iteration count as a function of the autocorrelation sparsity parameter, which seems well-suited for revealing behavioural trends. They also provided an important clarification of the different algorithms which have been labelled ``Fienup'' algorithms in the literature, some of which are not DR.
	
	\subsection{Discrete Sets}\label{nonconvex_discrete}
	
	The landmark experimental work on discrete sets is that of Elser, Rankenburg, and Thibault \cite{ERT07}. They considered the performance of what they called the \emph{difference map} for various values of the parameter $\beta$:
	\begin{align}
	T:x &\mapsto x + \beta \left(P_A\circ f_B(x)-P_B\circ f_A(x) \right), \label{difference_map}\\
	\text{where}\quad f_A:x &\mapsto P_A(x)-\left(P_A(x)-x \right)/\beta, \nonumber\\
	\text{and}\quad f_B:x &\mapsto P_B(x) + \left(P_B(x)-x \right)/\beta .\nonumber
	\end{align}
	When $\beta=-1$, we recover the DR operator $T_{A,B}$, and when $\beta=1$, we obtain $T_{B,A}$.
	
	\subsubsection{Stochastic Problems}
	
	Much of the surprising success of DR has been in the setting where some of the sets of interest have had the form $\{0,1\}^p$. Elser et al. adopted the approach of using stochastic feasibility problems to study the performance of DR \cite{ERT07}. They began with the problem of solving the linear Diophantine equation $Cx=b$, where $C$, a stochastic $p \times q$ matrix, and $b \in \mathbb{N}^p$ are ``known.'' Requiring the solution $x \in \{0,1\}^q$ that is used to generate the problem to also be stochastic ensures uniqueness of the solution for the feasibility problem: find $x \in A\cap B$ where 
	\begin{equation*}
	A:= \{0,1\}^q, \quad \text{and} \quad B:=\{x \in \mathbb{R}^q \;\text{such that}\; Cx=b \}.
	\end{equation*} 
	They continued by solving Latin squares of $n$ symbols. Where $x_{ijk}=1$ indicates that the cell in the $i$th row of the $j$th column of the square is $k$, the problem is stochastic and the constraint that $x_{\hat{i}\hat{j}\hat{k}}=1$ if and only if $(\forall i\neq \hat{i})\;x_{i\hat{j}\hat{k}} = 0 , (\forall j\neq \hat{j}) x_{\hat{i}j\hat{k}} = 0$ determines the set of allowable solutions. The most familiar form of a Latin square is the Sudoku puzzle where $n=9$ and we require the additional constraint that the complete square consist of a grid of $9$ smaller Latin squares. For more on the history of the application of projection algorithms to solving Sudoku puzzles, see Schaad's master's thesis \cite{schaad2010modeling} in which he also applies the method to the $8$ queens problem .
	
	This work of Elser et al. piqued the interest of Jonathan Borwein who in 2013, together with Arag\'{o}n Artacho and Matthew Tam, continued the investigation of Sudoku puzzles \cite{ABT1,ABT1b}, exploring the effect of formulation (integer feasibility vs. stochastic) on performance. They also extended the approach by solving nonogram problems.
	
	\subsubsection{Matrix completion and decomposition}
	
	Another application for which DR has shown promising results is finding the remaining entries of a partially specified matrix in order to obtain a matrix of a given type. Borwein, Arag\'{o}n Artacho, and Tam considered the behaviour of DR for such matrix completion problems \cite{ABT2}. They provided a discussion of the convex setting, including positive semidefinite matrices, correlation matrices, and doubly stochastic matrices. They went on to provide experimental results for a number of nonconvex problems, including for rank minimization, protein reconstruction, and finding Hadamard and skew-Hadamard matrices. In 2017, Artacho, Campoy, Kotsireas, and Tam applied DR to construct various classes of circulant combinatorial designs \cite{ACKT2018}, reformulating them as three set feasibility problems. Designs they studied included Hadamard matrices with two circulant cores, as well as circulant weighing matrices and D-optimal matrices.
	
	Even more recently, David Franklin used DR to find compactly supported, non-separable wavelets with orthonormal shifts, subject to the additional constraint of regularity \cite{Franklin,Franklinemail}. Reformulating the search as a three set feasibility problem in $\{\mathbb{C}^{2\times2}\}^M$ for $M=\{4,6,8,\dots\}$, they compared the performance of cyclic DR, product space DR, cyclic projections, and the \emph{proximal alternating linear minimisation} (or \emph{PALM}) algorithm. Impressively, product space DR solved every problem it was presented with.
	%last paragraph before section 1.2 starts
	
	In 2017, Elser applied DR---under the name RRR (short for \emph{relaxed reflect-reflect})---for matrix decomposition problems, making several novel observations about DR's tendency to wander, searching in an apparently chaotic manner, until it happens upon the basin for a fixed point \cite{elser2017matrix}. These observations have motivated the open question we pose in \ref{complexity_theory}.
	
	\subsubsection{The study of proteins}
	
	In 2014, Borwein and Tam went on to consider protein conformation determination, reformulating such problems as matrix completion problems \cite{BT}. An excellent resource for understanding the early class problems studied by Borwein, Tam, and Arag\'{o}n Artacho---as well as the cyclic DR algorithm described in subsection~\ref{subsec:Nsets}---is Tam's PhD dissertation \cite{Tam}.
	
	Elser et al. applied DR to study protein folding problems, discovering much faster performance than that of the landscape sampling methods commonly used \cite{ERT07}. 
	
	\subsubsection{Where A is a subspace and B a restriction of allowable solutions}
	Elser et al. applied DR to the study of 3-SAT problems, comparing its performance to that of another solver, Walksat \cite{ERT07} (see also \cite{GE}). They found that DR solved all instances without requiring random restarts. They also applied the method to the spin glass ground state problem, an integer quadratic optimization program with nonpositive objective function.
	
	\subsubsection{Graph Coloring}
	Elser et al. applied DR to find colorings of the \emph{edges} of complete graphs with the constraint that no triangle may have all its edges of the same color \cite{ERT07}. They compared its performance to CPLEX, and included an illustration showing the change of edge colors over time. DR solved all instances, and outperformed CPLEX in harder instances.
	
	In 2016, Francisco Arag\'{o}n Artacho and Rub\'{e}n Campoy applied DR to solving graph coloring problems in the usual context of coloring \emph{nodes} \cite{AC}. They constructed the feasibility problem by attaching one of two kinds of gadgets to the graphs, and they compared performance with the two different gadget types both with and without the inclusion of maximal clique information. They also explored the performance for several other problems reformulated as graph coloring problems; these included: 3-SAT, Sudoku puzzles, the eight queens problem and generalizations thereof, and Hamiltonian path problems.
	
	More recently, Arag\'on Artacho, Campoy, and Elser \cite{artacho2018enhanced} have considered a reformulation of the graph coloring problem based on semidefinite programming, demonstrating its superiority through numerical experimentation.
	
	\subsubsection{Other implementations}
	Elser et al. went on to consider the case of bit retrieval, where $A$ is a Fourier magnitude/autocorrelation constraint and $B$ is the binary constraint set $\{\pm 1/2\}^n$ \cite{ERT07}. They found its performance to be superior to that of CPLEX. 
	
	Bansal used DR to solve Tetravex problems \cite{bansal2010code}.
	
	More recently, in 2018, Elser expounded further upon the performance of DR under varying degrees of complexity by studying its behaviour on bit retrieval problems \cite{elser2018complexity}. Of his findings of its performance  he observes:
	
	\begin{displayquote}
		These statistics are consistent with an algorithm that blindly and repeatedly reaches into an urn of $M$ solution candidates, terminating when it has retrieved one of the $4\times 43$ solutions. Two questions immediately come to mind. The easier of these is: How can an algorithm that is deterministic over most of its run-time behave randomly? The much harder question is: How did the $M=2^{43}$ solution candidates get reduced, apparently, to only about $1.7\times 10^5 \times (4 \times 43) \approx 2^{24}$?
	\end{displayquote}
	
	The behaviour of DR under varying complexity remains a fascinating open topic, and we provide it as one of our two open problems in \ref{complexity_theory}.
	
	\subsubsection{Theoretical Analysis}
	
	One of the first global convergence results in the nonconvex setting was given by Arag\'{o}n Artacho, Borwein, and Tam in the setting where one set is a half space and the second set finite \cite{FranMattJonGlobal}. Bauschke, Dao, and Lindstrom have since fully categorized the global behaviour for the case of a hyperplane and a doubleton (set of two points) \cite{BDL18}. Both problems are prototypical of discrete combinatorial feasibility problems, the latter especially insofar as the hyperplane is analogous to the agreement set in the product space version of the method discussed in section~\ref{sec:productspace}, the most commonly employed method for problems of more than $2$ sets.
	
	\subsection{Hypersurfaces}\label{nonconvex_hypersurfaces}	
	
	In 2011, Borwein and Sims made the first attempt at deconstructing the behaviour of DR in the nonconvex setting of hypersurfaces \cite{BS}. In particular, they considered in detail the case of a circle $A$ and a line $B$, a problem prototypical of phase retrieval. Here the dynamical geometry software \emph{Cinderella} \cite{cinderella2} first played an important role in the analysis: the authors paired Cinderella's graphical interface with accurate computational output from \emph{Maple} in order to visualize the behaviour of the dynamical system. Borwein and Sims went on to show local convergence in the feasible case where the line is not tangent to the $2$-sphere by using a theorem of Perron. They concluded by suggesting analysis for a generalization of the $2$-sphere: $p$-spheres.
	
	In 2013 Arag\'{o}n Artacho and Borwein revisited the case of a $2$-sphere and line intersecting non-tangentially \cite{AB}. When $x_0$ lies in the subspace perpendicular to $B$, the sequence $(x_n)_{n=0}^\infty$ is contained in the subspace and exhibits behaviour that at first appeared chaotic; it was later shown by Bauschke, Dao, and Lindstrom to be describable by generalized Beatty sequences \cite{BDL18}. For $x_0$ not in the aforementioned subspace---which we call the \emph{singular set}---Arag\'{o}n Artacho and Borwein provided a conditional proof of global convergence of iterates to the nearer of the two feasible points. The proof relied upon constructing and analysing the movement of iterates through different regions. Borwein humorously remarked of the result that, ``This was definitely not a proof from \emph{the book}. It was a proof from the \emph{anti-book}.'' Jo\"{e}l Benoist later provided an elegant proof of global convergence by constructing the Lyapunov function seen in Figure~\ref{fig:circleandlineright} \cite{Benoist}.
	
	In one of his later posthumous publications on the subject \cite{borwein2017ergodic}, Borwein, together with Ohad Giladi, demonstrated that the $DR$ operator for a sphere and a convex set may be approximated by another operator satisfying a weak ergodic theorem.
	
	\begin{figure}[ht]
		\begin{center}
			\includegraphics[angle=90,width=.75\textwidth]{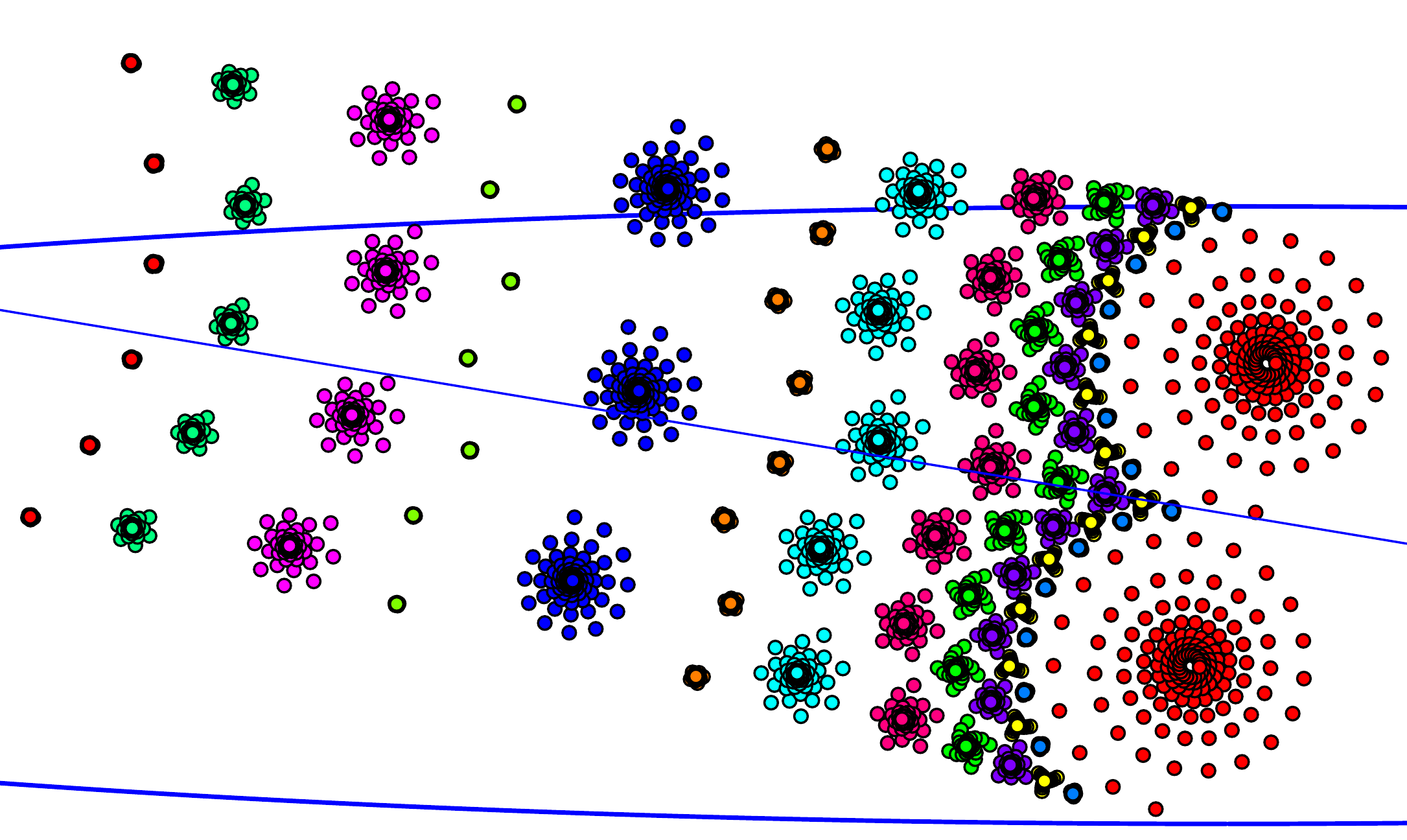}
		\end{center}
		\caption{Basins of attraction for periodic points with an ellipse and line}\label{fig:ellipsesequences}
	\end{figure}
	
	In 2016, Borwein, Lindstrom, Sims, Skerrit, and Schneider undertook Borwein's suggested follow up work in $\mathbb{R}^2$, analysing not only the case of $p$-spheres more generally but also of ellipses \cite{BLSSS}. They discovered incredible sensitivity of the global behaviour to small perturbations of the sets, with some arrangements eliciting a complex and beautiful geometry characterized by \emph{periodic points} with corresponding basins of attraction. A point $x$ satisfying $T^n x=x$ is said to be periodic with \emph{period} the smallest $n$ for which this holds; Figure~\ref{fig:ellipsesequences} from \cite{BLSSS} shows $13$ different DR sequences for an ellipse and line from which subsequences converge to periodic points.
	\begin{figure}[ht]
		\begin{center}
			\includegraphics[angle=90,width=.75\textwidth]{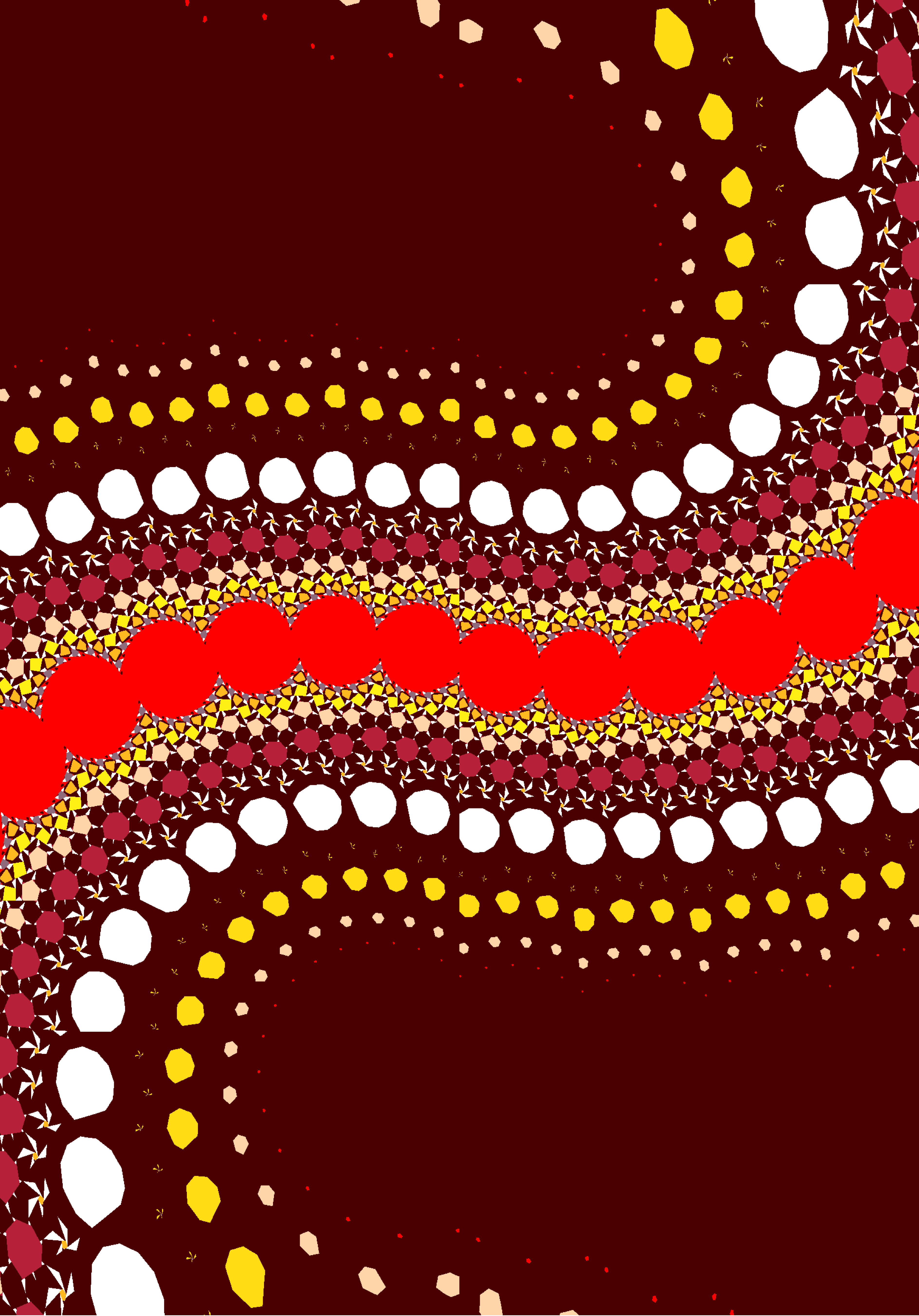}
		\end{center}
		\caption{Basins of attraction for an ellipse and line with colors based on Aboriginal Australian artwork. This image appears on the poster for \emph{MoCaO}.}\label{fig:ellipseart}
	\end{figure}
	Borwein et al. combined data from \emph{Cinderella} with parallelization techniques in order to visualize the global behaviour. An artistic rendering of the basins with colors chosen based on Aboriginal Australian artwork may be seen in Figure~\ref{fig:ellipseart}; this image appears on the poster for \emph{Mathematics of Computation and Optimization (MoCaO)}, an Australian Mathematical Society special interest group founded by Borwein and J\'er\^ome Droniou.
	
	Borwein et al. went on to show local convergence to feasible points in the case where the ellipse and line intersect non-tangentially, and they extended a best approximation result of Moursi and Bauschke \cite{BMshadow} in the setting of boundaries of convex sets. 
	
	In order to check the potential influence of sensitivity to compounding numerical error on their discoveries, Borwein et al. used Schwarzian reflection to compute approximate projections as an alternative to the numerical solution of a Lagrangian problem (see, for example, \cite{Needham}). This work inspired a 2017 follow up by Lindstrom, Sims, and Skerritt \cite{LSS}, analysing the performance of DR for finding intersections of smooth curves in $\mathbb{R}^2$ more generally and showing that local convergence extends to the more general case of smooth plane curves intersecting non-tangentially with reasonable limits on their curvature (in definition~\ref{def:superregular} we will introduce the notion of \emph{superregularity}). Dao and Tam \cite{DT} have since adapted Benoist's Lyapunov approach to beautifully illuminate the behaviour for more general curves, including showing global behaviour for many curve pairs.
	
	\begin{figure}[ht]
		\begin{center}
			\subfloat[Approximate solutions for an ODE]{\includegraphics[width=.3\textwidth]{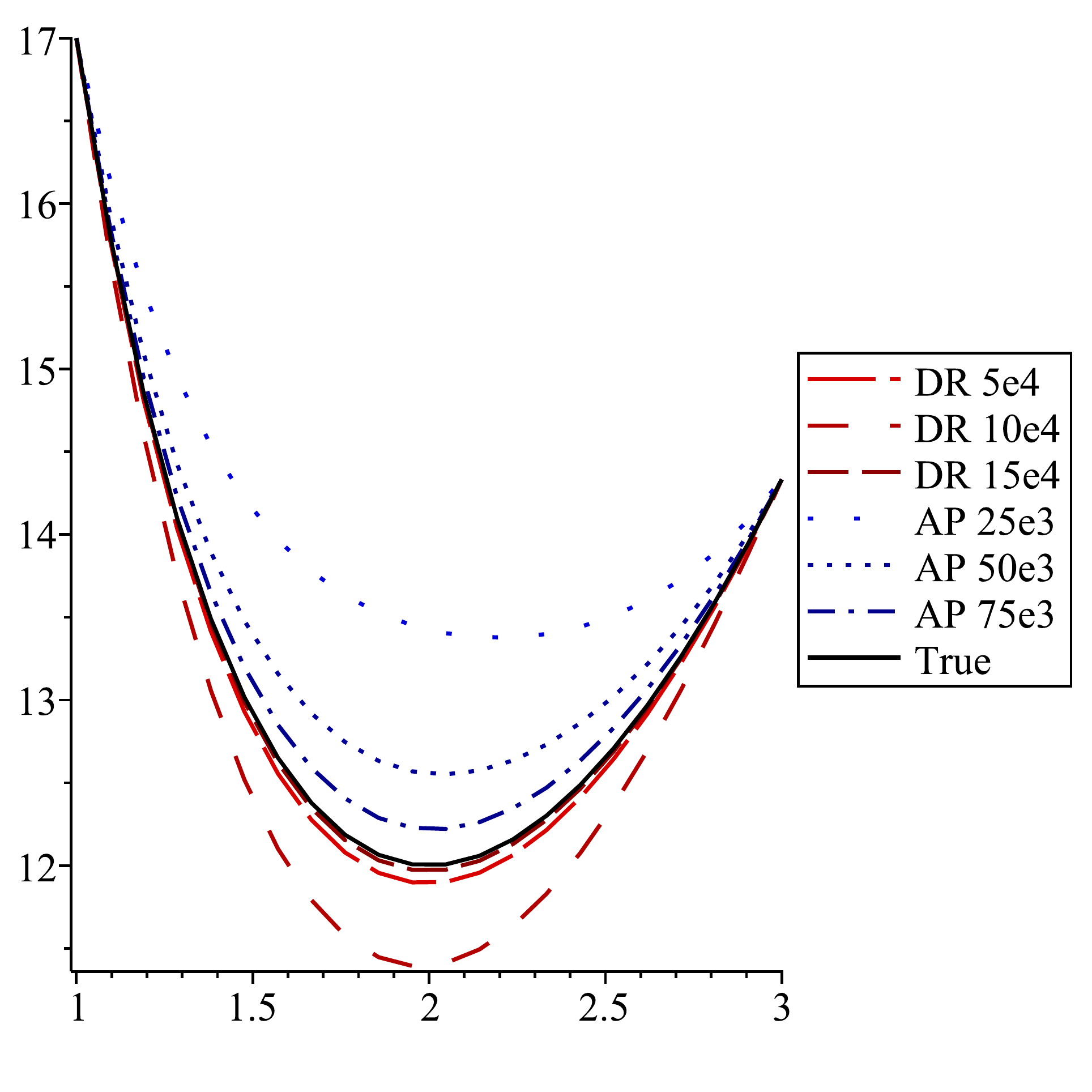}\label{fig:bookleft}}
			\subfloat[DR error plot]{\includegraphics[width=.3\textwidth]{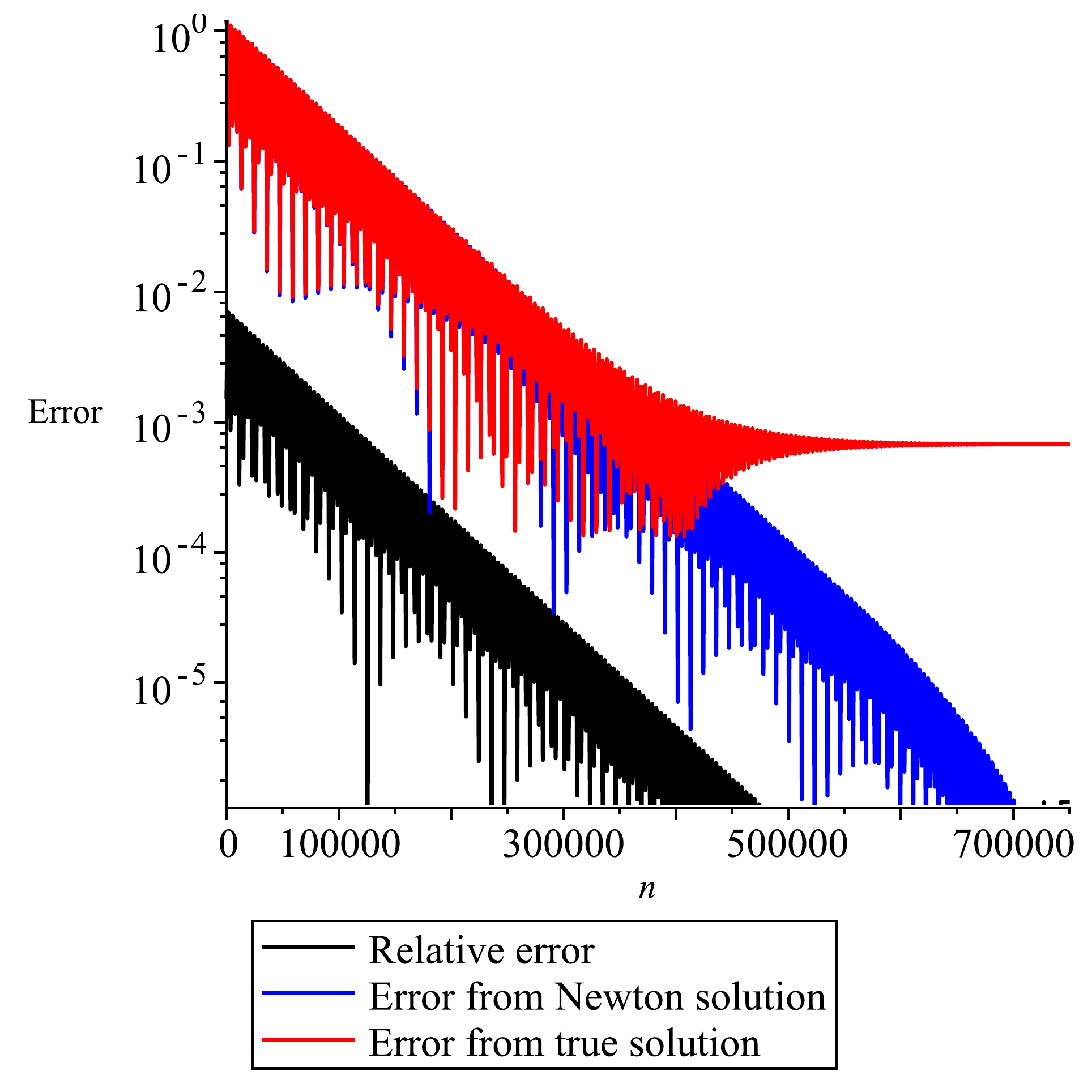}\label{fig:bookcenter}}
			\subfloat[AP error plot]{\includegraphics[width=.3\textwidth]{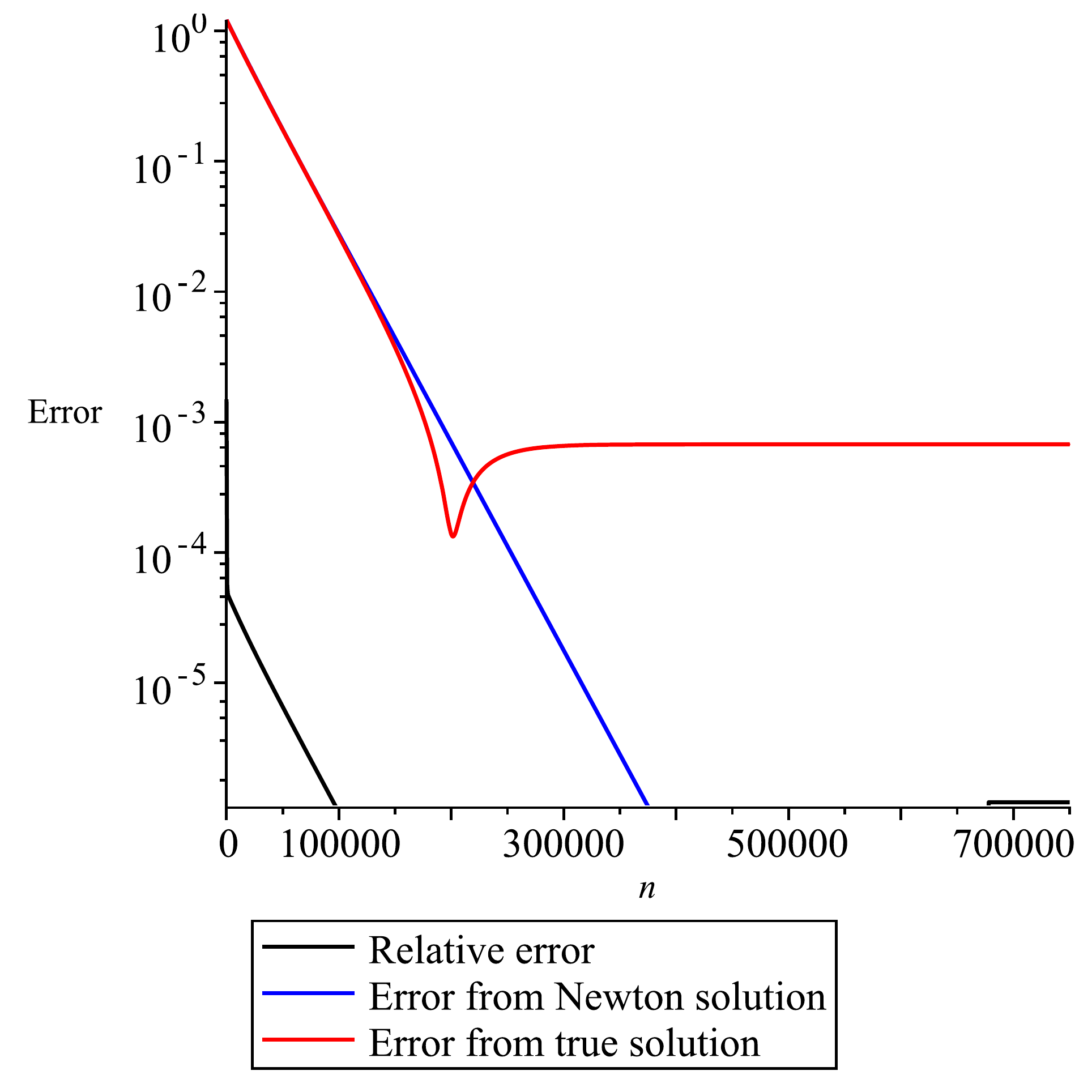}\label{fig:bookright}}
		\end{center}
		\caption{Comparison of DR and AP convergence behaviour}\label{fig:book}				
	\end{figure}
	
	Lamichhane, Lindstrom, and Sims used AP and DR to find numerical solutions for boundary value ODEs on closed intervals in $\mathbb{R}$ by reformulating the problem of $N$ node approximation as a feasibility problem of satisfying $N$ equations which define possibly discontinuous hypersurfaces \cite{LSS}. The approach is mostly experimental, and they compared the observed convergence to that explicitly visible in the $2$ set ellipse/line setting. They also compared the behaviour of DR and AP on each test problem, finding AP to generally perform faster.
	
	The above studies on hypersurfaces have uncovered a general trend which distinguishes AP from DR. Namely: AP is more prone to becoming trapped by extraneous fixed points but demonstrates monotonicity in convergence with an asymptotic direction of approach, while DR tends to escape from false solutions and its basins of convergence persistently feature spiralling trajectories which induce observed oscillations in plots of change and error. Some of this behaviour may be seen in Figure~\ref{fig:book} from \cite{LLS} which shows the behaviour, as measured for the agreement set shadow sequence $P_B x_n$, when seeking the solution to a $N$ set feasibility problem corresponding to the numerical solution of a boundary value problem. In Figure~\ref{fig:bookcenter} and Figure~\ref{fig:bookright}, relative error (change from iterate to iterate), error from numerical solution (obtained by applying Newton's method to the discretized problem), and error from the true solution (analytically obtained) are monotonic for AP but oscillate for DR. This monotonicity may be further observed in Figure~\ref{fig:bookleft} where approximate solutions to a boundary value problem---corresponding to various step intervals for DR and AP---may be seen along with the true solution; AP approaches the true solution from one side, while DR exhibits more exotic behaviour. The authors of \cite{LLS} hypothesize that the observed left-right-left wandering of $P_B x_n$ visible in Figure~\ref{fig:circleandlineleft} which results from the spiralling of $x_n$ is prototypical of the numerically observed oscillation in more complicated settings like Franklin's work on wavelets. 
	
	\subsection{Results on regularity, transversality, and rates of convergence}\label{transversal_section}
	
	Much of the convergence analysis in the nonconvex setting has focused on regularity assumptions. Throughout this section, $A$ and $B$ continue to be closed subsets of a finite-dimensional Euclidean space $X$.
	\begin{definition}[Regularity and transversality]The closed sets $\{C_i\}_{i \in I},\; I=\{1,\dots,m\}$ are said to be
		\begin{enumerate}
			\item $\kappa$\emph{-subtransversal} or $\kappa$\emph{-linearly regular} with regularity modulus $\kappa \in ]0,\infty[$ on $U\subset X$ if
			\begin{equation*}
			(\forall x \in U) \quad d_C(x) \leq \kappa \; \underset{i \in I}{\max}d_{C_i}(x),\quad \text{where}\quad C:=\bigcap_{i \in I}C_i ;
			\end{equation*}
			\item \emph{subtransversal} around $x \in X$ or \emph{linearly regular} at $x \in X$ if there exist $\delta$ and $\kappa$ greater than $0$ such that $\{C_i\}_{i \in I}$ is $\kappa$-linearly regular on $\mathbb{B}(x,\delta)$;
			\item \emph{boundedly linearly regular} if for every bounded set $U \subset X$ there exists $\kappa_U >0 $ such that $\{C_i\}_{i \in I}$ is $\kappa$-linearly regular on $U$.
			\item $U$-regular at $x \in X$ if $U$ is an affine subspace of $X$ with $x \in U$ and
			\begin{equation*}
			\sum_{i \in I}u_i = 0\quad \text{and}\quad u_i \in N_{C_i}(x)\cap (U-x) \; \implies \; (\forall i \in I) \; u_i = 0.
			\end{equation*}
			\item \emph{transversal} or \emph{strongly regular} at $x \in X$ if $\{C_i\}_{i \in I}$ is $U$-regular with $U=X$.
			\item \emph{affine-hull regular} at $x$ in the two set case $m=2$ when $L=\text{aff} (C_1 \cup C_2)$ if $N_A^L(x)\cap (-N_B^L(x)) = \{0\}$.
		\end{enumerate}
		See, for example, \cite{dao2016linear,kruger2006regularity,Phan,HL13}.
	\end{definition}
	More recently, the notion of \emph{intrinsic transversality} has been introduced which fills a theoretical gap between the regularity conditions of transversality and subtransversality \cite{drusvyatskiy2014alternating} (see also \cite{kruger2018intrinsic}).
	
	It may be readily seen that an ellipse and line which intersect non-tangentially are transversal at the point of intersection. Indeed the regularity framework locally describes many hypersurface feasibility problems. The notion of \emph{superregularity} for a single set $C$ may be thought of as a smoothness condition.
	\begin{definition}[superregularity]\label{def:superregular}A closed subset $A\subset X$ is $(\varepsilon,\delta)$-regular at $x$ if $\varepsilon \geq 0, \delta >0$, and
		\begin{equation*}
		\begin{rcases*}y,z \in A \cap \mathbb{B}_\delta(x)\\
		u \in N_A^{\text{prox}}(x) = \text{cone}(P_A^{-1}x-x) \end{rcases*} \implies \langle u, z-y \rangle \leq \varepsilon \|u\| \cdot \|z-y\|.
		\end{equation*}
		$C$ is said to be superregular at $x$ if for every $\varepsilon > 0$ there exists $\delta > 0$ such that $C$ is $(\epsilon,\delta)$-regular at $x$.
		See, for example, \cite{Phan}.
	\end{definition}
	It may be seen that in the case $X=\mathbb{R}^2$ and $A=\text{graph}f=\{(x_1,x_2) | f(x_1)=x_2 \}$ where $f:\mathbb{R} \rightarrow \mathbb{R}$, superregularity of $A$ at $(x,f(x))$ implies smoothness of $f$ at $x$.
	\begin{figure}[ht]
		\begin{center}
			\includegraphics[width=.5\textwidth]{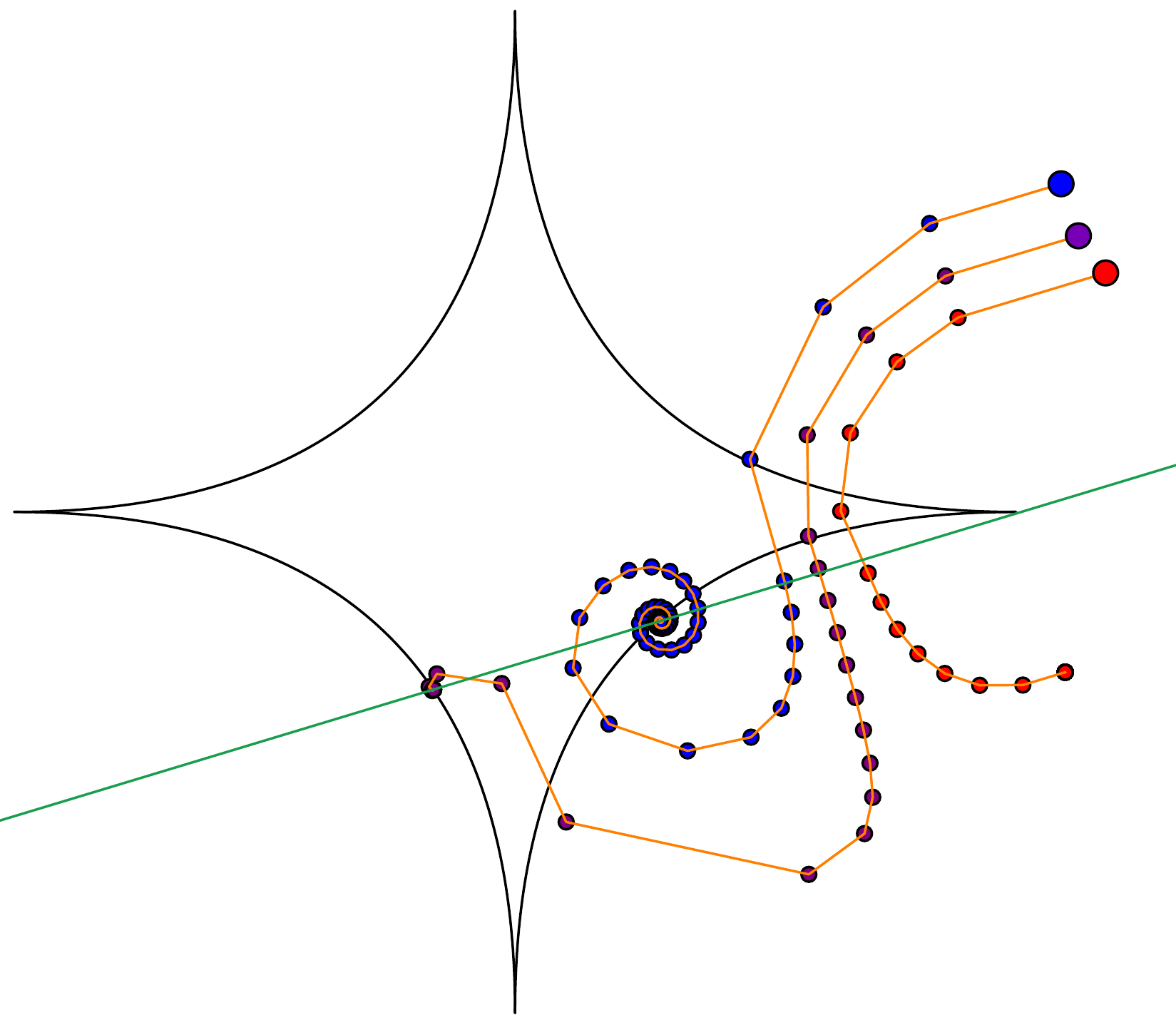}
		\end{center}
		\caption{DR convergence for a $1/2$-sphere and a line}\label{fig:psphere}
	\end{figure}
	
	Figure~\ref{fig:circleandlinecenter} shows how DR may behave when regularity conditions are not satisfied at the feasible point, while the rightmost sequence in Figure~\ref{fig:psphere} illustrates what may happen when two sets meet subtransversally but superregularity fails for one of them (the $p$-sphere). The other two sequences illustrate how the angle at which the sets meet at the feasible point determines the linear rate of convergence.
	
	As early as 2013, Lewis, Luke, and Malick analysed the local convergence for alternating and averaged nonconvex projection methods in the presence of regularity conditions \cite{LLM}. In the same year, Hesse and Luke undertook a theoretical study of DR in the presence of local regularity conditions in finite dimensions \cite{HL13}. They showed that when the sets involved are affine, strong regularity is necessary for linear convergence, in contradistinction with AP for which such conditions are sufficient but not necessary. They also established a number of linear convergence results for DR, the first of which is as follows.
	\begin{theorem}{Linear convergence of DR (Luke \& Hess, 2013 \cite[Theorem 3.16]{HL13})}
		Let the pair of closed sets $\{A,B\}$ be linearly regular at $x \in A\cap B$ on $\mathbb{B}_\delta(x)$ with regularity modulus $\kappa > 0$ for some $\delta > 0$. Suppose further that $B$ is a subspace and that $A$ is $(\varepsilon, \delta)$-regular at $x$ with respect to $A\cap B$. Assume that for some $c \in ]0,1[$ the following holds:
		\begin{equation*}
		\begin{rcases*}
		z \in A \cap \mathbb{B}_\delta (x), \quad u \in N_A(z)\cap\mathbb{B}_1(0)\\
		y \in B \cap \mathbb{B}_\delta(x), \quad v \in N_B(y) \cap \mathbb{B}_1(0)
		\end{rcases*} \implies \langle u, v \rangle \geq -c.
		\end{equation*}
		If $x_n \in \mathbb{B}_{\delta/2}(x)$ and $x_{n+1} \in T_{A,B}x_n$ then 
		\begin{equation*}
		d(x_{n+1},A\cap B) \leq \sqrt{1+2\varepsilon(1+\varepsilon)-\frac{1-c}{\kappa^2}} d(x_n,A\cap B).
		\end{equation*}
	\end{theorem}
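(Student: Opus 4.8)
The plan is to prove the one-step contraction inequality directly, using crucially that $B$ is a subspace, so that $P_B$ is linear, $X=B\oplus B^{\perp}$ orthogonally, and $N_B(y)=B^{\perp}$ at every $y\in B$.

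\emph{Step 1 (simplify the update and localise).} Since $B$ is a subspace, $P_B R_A x_n=P_B(2P_Ax_n-x_n)=2P_BP_Ax_n-P_Bx_n$; expanding $x_{n+1}=x_n+P_BR_Ax_n-P_Ax_n$ and splitting $x_n$ and $P_Ax_n$ into their $B$- and $B^{\perp}$-components gives the clean formula
\begin{equation*}
x_{n+1}=P_BP_Ax_n+P_{B^{\perp}}(x_n-P_Ax_n).
\end{equation*}
Write $a:=P_Ax_n$ and pick $\bar x\in\mathbb P_{A\cap B}(x_n)$, so $\|x_n-\bar x\|=d(x_n,A\cap B)=:d_n$. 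Since $x\in A\cap B$ and $x_n\in\mathbb B_{\delta/2}(x)$ we have $d_A(x_n)\le\|x_n-x\|\le\delta/2$ and $d_n\le\delta/2$, whence both $a$ and $\bar x$ lie in $\mathbb B_\delta(x)$ (and $x$ itself shows $B\cap\mathbb B_\delta(x)\neq\emptyset$); this is the only localisation bookkeeping needed. Because $\bar x\in B$, the two summands above with $\bar x$ subtracted off are orthogonal, so by Pythagoras and $\|P_{B^{\perp}}a\|=d_B(a)$,
\begin{equation*}
d(x_{n+1},A\cap B)^2\le\|x_{n+1}-\bar x\|^2=\|a-\bar x\|^2-d_B(a)^2+\|P_{B^{\perp}}(x_n-a)\|^2 .
\end{equation*}

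\emph{Step 2 (three estimates).} From $a-\bar x=(a-x_n)+(x_n-\bar x)$ one computes $\|a-\bar x\|^2=d_n^2-d_A(x_n)^2+2\langle x_n-a,\bar x-a\rangle$. Applying the $(\varepsilon,\delta)$-regularity of $A$ relative to $A\cap B$ to the proximal normal $x_n-a$ at $a$ and the nearby point $\bar x\in A\cap B$ gives $\langle x_n-a,\bar x-a\rangle\le\varepsilon\,d_A(x_n)\|a-\bar x\|$; solving the resulting quadratic inequality in $\|a-\bar x\|$ (using $d_A(x_n)\le d_n$ and $\varepsilon$ small) yields $\|a-\bar x\|^2\le(1+2\varepsilon(1+\varepsilon))d_n^2-d_A(x_n)^2$. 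For the last term I would recast the angle hypothesis: since $N_B(y)=B^{\perp}$ for every $y\in B$, testing $\langle u,v\rangle\ge-c$ against $v=-P_{B^{\perp}}u/\|P_{B^{\perp}}u\|$ shows it is equivalent to $\|P_{B^{\perp}}u\|\le c\|u\|$ for all $u\in N_A(z)$, $z\in A\cap\mathbb B_\delta(x)$; taking $u=x_n-a$ gives $\|P_{B^{\perp}}(x_n-a)\|^2\le c^2 d_A(x_n)^2$. Substituting,
\begin{equation*}
d(x_{n+1},A\cap B)^2\le(1+2\varepsilon(1+\varepsilon))d_n^2-\bigl[(1-c^2)d_A(x_n)^2+d_B(a)^2\bigr].
\end{equation*}

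\emph{Step 3 (the main obstacle: bringing in linear regularity).} It remains to show $(1-c^2)d_A(x_n)^2+d_B(a)^2\ge\frac{1-c}{\kappa^2}d_n^2$, and this is exactly where linear regularity enters. It gives $d_n\le\kappa\max\{d_A(x_n),d_B(x_n)\}$, while the recast angle condition also gives $d_B(x_n)=\|P_{B^{\perp}}x_n\|\le\|P_{B^{\perp}}a\|+\|P_{B^{\perp}}(x_n-a)\|\le d_B(a)+c\,d_A(x_n)$. If $d_A(x_n)\ge d_B(x_n)$ then $d_A(x_n)\ge d_n/\kappa$ and already $(1-c^2)d_A(x_n)^2\ge\frac{1-c}{\kappa^2}d_n^2$ because $1-c^2\ge1-c$. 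Otherwise $d_n/\kappa\le d_B(x_n)\le d_B(a)+c\,d_A(x_n)$, and it suffices to verify the elementary inequality $(1-c^2)p^2+q^2\ge(1-c)(q+cp)^2$ for $p,q\ge0$, $c\in(0,1)$; viewed as a quadratic in $p$ it has positive leading coefficient $(1-c)(1+c-c^2)$ and discriminant $-4c(1-c)q^2<0$, hence is nonnegative. Combining the cases with the bound from Step 2 and taking square roots produces the claimed factor $\sqrt{1+2\varepsilon(1+\varepsilon)-\frac{1-c}{\kappa^2}}$; note this is a genuine contraction precisely when $2\varepsilon(1+\varepsilon)<(1-c)/\kappa^2$, in which case iterating the estimate yields local linear convergence of $(x_n)$ to a point of $A\cap B$. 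I expect the delicate point to be keeping all constants sharp enough in Step 2–3 — in particular not prematurely discarding the $-d_A(x_n)^2$ term and carrying out the case split cleanly — rather than any single hard idea.
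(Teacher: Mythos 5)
The survey itself states this result without proof, simply citing Hesse and Luke, so the only meaningful comparison is with the original source -- and your argument is a genuinely different, essentially correct route. Hesse and Luke obtain the estimate from their general operator framework: $(\varepsilon,\delta)$-regularity of $A$ yields a quantified ``almost (firmly) nonexpansive'' property of $P_A$ and $R_A$ relative to $A\cap B$ (this is where the violation $2\varepsilon(1+\varepsilon)$ originates), the normal-vector angle condition enters as a coercivity hypothesis on the composition of reflectors, and linear regularity then converts the resulting Fej\'er-type inequality into a contraction of $d(\cdot,A\cap B)$. You instead exploit the subspace structure of $B$ from the outset: the exact identity $x_{n+1}=P_BP_Ax_n+P_{B^\perp}(x_n-P_Ax_n)$, Pythagoras against $\bar x\in\mathbb{P}_{A\cap B}(x_n)$, the reformulation of the angle hypothesis as $\|P_{B^\perp}u\|\le c\|u\|$ for $u\in N_A(z)$, $z\in A\cap\mathbb{B}_\delta(x)$, and a two-case use of linear regularity closed by the elementary inequality $(1-c^2)p^2+q^2\ge(1-c)(q+cp)^2$ (your discriminant computation $-4c(1-c)q^2<0$ is correct). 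I checked the key identities -- the expansion $\|a-\bar x\|^2=d_n^2-d_A(x_n)^2+2\langle x_n-a,\bar x-a\rangle$, the orthogonal splitting giving $\|x_{n+1}-\bar x\|^2=\|a-\bar x\|^2-d_B(a)^2+\|P_{B^\perp}(x_n-a)\|^2$, and the localisation $a,\bar x\in\mathbb{B}_\delta(x)$ -- and they all hold. Your approach buys a short, self-contained computation at the price of being wedded to the subspace case, which is all this theorem asserts; the original machinery is heavier but is what later extends to the non-affine statements (their Theorem 3.18 and Phan's strengthenings quoted just below in the survey).

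Two minor caveats, neither a genuine gap. First, the step $\|a-\bar x\|^2\le(1+2\varepsilon(1+\varepsilon))d_n^2-d_A(x_n)^2$ obtained by solving the quadratic inequality really does need $\varepsilon\le 1$ (use $\sqrt{\varepsilon^2p^2+d_n^2-p^2}\le d_n$); you flagged ``$\varepsilon$ small,'' and this is the only regime in which the conclusion has content, since a contraction forces $2\varepsilon(1+\varepsilon)<(1-c)/\kappa^2\le 1$, but it should be stated explicitly. Second, you use twice that $x_n-P_Ax_n$ is a proximal normal to $A$ at $P_Ax_n$, hence eligible both in the angle condition (after normalising, since $N_A$ is a cone) and in the relative $(\varepsilon,\delta)$-regularity; this inclusion of proximal normals in $N_A$ is standard but is doing real work and deserves a sentence, as does the existence of $\bar x$ (closedness of $A\cap B$ in the finite-dimensional setting of this section).
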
 
	Another of their results, \cite[Theorem 3.18]{HL13} has since been strengthened by Phan \cite[Theorem 4.3]{Phan} to the following.
	\begin{theorem}{Linear convergence of DR (Phan, 2016 \cite[Theorem 4.3]{Phan})}
		Let the closed sets $A,B$ be superregular at $x \in A \cap B$ and $\{A,B\}$ be strongly regular at $x$. Then if $x_0$ is sufficiently close to $x$, the sequence $x_{n+1}:=T_{A,B}x_n$ converges to a point $\overline{x} \in A \cap B$ with $R$-linear rate.
	\end{theorem}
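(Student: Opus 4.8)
\emph{Proof proposal.} The plan is to establish a one-step \emph{geometric} decrease of the distance to $A\cap B$ and then deduce $R$-linear convergence of the iterates by summation. First I would translate so that $x=0\in A\cap B$ and extract three quantitative ingredients valid on a neighbourhood of $0$. From strong regularity, $N_A(0)\cap(-N_B(0))=\{0\}$, together with outer semicontinuity of the limiting normal cones and a compactness argument on the unit sphere, I get a constant $c\in[0,1)$ and a radius $\delta_0>0$ with
\begin{equation*}
\langle u,v\rangle\ \ge\ -c\,\|u\|\,\|v\|\qquad\text{whenever}\quad a\in A\cap\mathbb{B}(0,\delta_0),\ b\in B\cap\mathbb{B}(0,\delta_0),\ u\in N_A(a),\ v\in N_B(b).
\end{equation*}
Since strong regularity implies subtransversality (linear regularity) in finite dimensions, I may also shrink $\delta_0$ and fix $\kappa>0$ with $d_{A\cap B}(y)\le\kappa\max\{d_A(y),d_B(y)\}$ on $\mathbb{B}(0,\delta_0)$. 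Finally, for a small $\varepsilon>0$ to be pinned down at the very end, superregularity of $A$ and of $B$ supplies $\delta_1>0$ so that both sets are $(\varepsilon,\delta_1)$-regular at $0$; I set $\delta:=\tfrac15\min\{\delta_0,\delta_1\}$ and assume $x_0\in\mathbb{B}(0,\delta)$.

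Next I would unpack the Douglas--Rachford step. With $a_n:=P_Ax_n$ and $b_n:=P_B(2a_n-x_n)$ one has $x_{n+1}=x_n+b_n-a_n$, and the prox-normal vectors $u_n:=x_n-a_n\in N_A^{\text{prox}}(a_n)$ and $v_n:=(2a_n-x_n)-b_n\in N_B^{\text{prox}}(b_n)$ satisfy $u_n+v_n=a_n-b_n=x_n-x_{n+1}$ and, for every $z$, $x_{n+1}-z=(a_n-z)-v_n$. Choosing $z:=z_n$, a nearest point of $A\cap B$ to $x_n$, I expand $\|x_{n+1}-z_n\|^2$, insert the two superregularity/projection inequalities $\langle u_n,z_n-a_n\rangle\le\varepsilon\|u_n\|\|z_n-a_n\|$ and $\langle v_n,z_n-b_n\rangle\le\varepsilon\|v_n\|\|z_n-b_n\|$, use $a_n-b_n=u_n+v_n$ together with the angle bound $\langle u_n,v_n\rangle\ge-c\|u_n\|\|v_n\|$, and then bound $\|z_n-a_n\|$ and $\|z_n-b_n\|$ by $O(\|u_n\|+\|v_n\|+d(x_n,A\cap B))$ and clean up the cross terms with Young's inequality. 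This should deliver an estimate of the shape
\begin{equation*}
d(x_{n+1},A\cap B)^2\ \le\ d(x_n,A\cap B)^2\ -\ (1-c)\bigl(\|u_n\|^2+\|v_n\|^2\bigr)\ +\ C\varepsilon\bigl(\|u_n\|^2+\|v_n\|^2+d(x_n,A\cap B)^2\bigr),
\end{equation*}
with $C$ depending only on the ambient data.

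To close the loop I would note $\|u_n\|=d_A(x_n)$ and $d_B(x_n)\le\|x_n-b_n\|\le 2\|u_n\|+\|v_n\|$, so subtransversality forces $\|u_n\|^2+\|v_n\|^2\ge c_0\,d(x_n,A\cap B)^2$ for some $c_0=c_0(\kappa)>0$; feeding this back and then choosing $\varepsilon$ small enough (in terms of $c$, $c_0$, $C$) to absorb the error term yields $d(x_{n+1},A\cap B)\le\rho\,d(x_n,A\cap B)$ with a fixed $\rho\in(0,1)$. This one-step contraction must be run inside an induction that also keeps $x_n,a_n,b_n,z_n$ in the balls where the three ingredients were established; that is easy because $\|x_{n+1}-x_n\|=\|u_n+v_n\|\le\|u_n\|+\|v_n\|\le 6\,d(x_n,A\cap B)\le 6\rho^{\,n}d(x_0,A\cap B)$, so a geometric-series estimate confines the whole orbit once $x_0\in\mathbb{B}(0,\delta)$. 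The same bound shows $(x_n)$ is Cauchy with $\|x_n-\overline x\|=O(\rho^{\,n})$, and $\overline x\in A\cap B$ because $d(\overline x,A\cap B)=\lim_n d(x_n,A\cap B)=0$ and $A\cap B$ is closed; since the convergence of the iterates is obtained by summing the $Q$-linear decay of $d(x_n,A\cap B)$, it is of $R$-linear rate, which is exactly the assertion.

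The main obstacle is not the contraction inequality itself but the bookkeeping surrounding it: the constants must be chosen in the right order ($\delta_0$ first, then $\kappa$ and $c$, then $\varepsilon$, then $\delta_1$, then $\delta$), and one must carry an invariance induction certifying that $x_n$ \emph{and} each of the auxiliary points $a_n,b_n,z_n$ stay in the neighbourhood where superregularity, the angle bound, and subtransversality all hold — losing control of even one of these invalidates the whole estimate. Closely tied to this is the delicate absorption of the $O(\varepsilon)$ cross terms: superregularity is precisely what allows the nonconvex projection inequalities to differ from their convex counterparts by only an $\varepsilon$-perturbation, while strong regularity ($c<1$) is precisely what produces the slack $(1-c)(\|u_n\|^2+\|v_n\|^2)$ needed to swallow that perturbation. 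Both hypotheses are therefore used quantitatively and essentially, and the argument collapses if either is dropped.
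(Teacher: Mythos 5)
Your strategy is sound, and the three quantitative ingredients you isolate are exactly the right ones: the estimate $\|x_{n+1}-z_n\|^2 \le \|x_n-z_n\|^2 -(1-c)(\|u_n\|^2+\|v_n\|^2) + O(\varepsilon)$ does follow from the identities $x_{n+1}-z=(a_n-z)-v_n$, $x_n-z=(a_n-z)+u_n$, $u_n+v_n=a_n-b_n$ together with the two superregularity inequalities and the angle bound, and the facts you invoke without proof (strong regularity at $x$ yields, via outer semicontinuity of the limiting normal cone and compactness, a uniform bound $\langle u,v\rangle\ge -c\|u\|\|v\|$ with $c<1$ at nearby points, and also local subtransversality) are true in finite dimensions, though each is itself a nontrivial lemma that would need proof or citation. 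Note, however, that the present paper is a survey and gives no proof of this statement, citing Phan \cite{Phan}; your argument is organized differently from his. You run a direct Fej\'er-type contraction of $d(\cdot,A\cap B)$, which is in the spirit of the Hesse--Luke result \cite{HL13} quoted just before this theorem, with the key twist that superregularity of \emph{both} sets replaces their assumption that $B$ is a subspace --- which is precisely the generalization Phan achieved, but he reaches it through a more abstract operator framework (almost-nonexpansiveness of relaxed reflectors under superregularity plus a coercivity/metric-subregularity condition coming from strong regularity, yielding linear convergence to a fixed point whose shadow, and here the point itself, lies in $A\cap B$). Your route buys a shorter, self-contained contraction estimate; Phan's buys uniformity over relaxation parameters and reuse for his affine-hull-regular and cyclic results. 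One piece of bookkeeping in your write-up is genuinely off as stated: $\delta:=\tfrac15\min\{\delta_0,\delta_1\}$ is not enough, since the auxiliary point $b_n$ can sit roughly $10\|x_n-x\|$ from $x$ and, more importantly, the total drift of the orbit is of order $d(x_0,A\cap B)/(1-\rho)$, so $\delta$ must be chosen \emph{after} $\rho$ and proportional to $(1-\rho)\min\{\delta_0,\delta_1\}$; with that correction (which your closing remarks on the order of constants essentially anticipate), the induction closes and the proof goes through.
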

	Phan provided additional information about the rate $R$ in \cite[Remark 4.5]{Phan}, and gave the following second main result on affine-hull regularity.
	\begin{theorem}{Linear convergence of DR (Phan, 2016 \cite[Theorem 4.7]{Phan})}Let $A,B$ be closed and $L:=\text{aff}(A\cup B)$. Further suppose $A,B$ are superregular at $x \in A \cap B$ and $\{A,B\}$ is affine-hull regular at $x$. Then, if the the shadow sequence $P_L x_0$ is sufficiently close to $x$, the DR sequence $x_{n+1}:=T_{A,B}x_n$ converges to a point $\overline{x} \in \Fix T_{A,B}$ with $R$-linear rate. Moreover,
		\begin{equation}
		P_A \overline{x} \equiv P_B \overline{x} = \overline{x} - (x_0-P_L x_0) \in A \cap B,
		\end{equation}
		and so $P_A \overline{x} \equiv P_B \overline{x}$ solves the feasibility problem.
	\end{theorem}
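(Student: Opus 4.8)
The plan is to reduce this affine-hull regular situation to the strongly regular case treated in the preceding theorem by passing to the affine subspace $L=\text{aff}(A\cup B)$. Write $L=x+L_0$ with $L_0$ the parallel linear subspace and decompose $X=L_0\oplus L_0^\perp$. The crucial structural fact is that, because $A,B\subseteq L$ and $L$ is affine, the Pythagorean identity forces the $\argmin$ sets to satisfy $P_A(z)=P_A(P_Lz)$ and $P_B(z)=P_B(P_Lz)$ for every $z\in X$ (no convexity needed). Hence each reflection $R_A$, $R_B$ acts on the $L_0$-component by the corresponding reflection computed inside $L$ and negates the $L_0^\perp$-component. Running the two double-reflection passes through $T_{A,B}$, the $L_0^\perp$-component is negated by $R_A$ and restored by $R_B$, then averaged with the un-negated copy coming from the identity term, so it is frozen: writing $q_n:=x_n-P_Lx_n$ and $y_n:=P_Lx_n$, one gets $q_n\equiv q_0=x_0-P_Lx_0$ for all $n$, while $y_{n+1}=T_{A,B}^{L}y_n$, the Douglas--Rachford iteration carried out entirely inside $L$, with $y_0=P_Lx_0$.

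Next I would check that, regarded as subsets of $L$, the pair $\{A,B\}$ meets the hypotheses of the strongly regular theorem above at $x$. Transversality of $\{A,B\}$ at $x$ relative to $L$ means exactly $N_A^L(x)\cap(-N_B^L(x))=\{0\}$, which is precisely affine-hull regularity at $x$; so nothing is to be proved there. For superregularity, observe that the proximal normal cone of $A$ at $x$ computed inside $L$ equals $\text{cone}\big((P_A^{-1}x\cap L)-x\big)$, a subcone of the ambient $N_A^{\text{prox}}(x)=\text{cone}(P_A^{-1}x-x)$ (again using $P_A=P_A\circ P_L$), while $A\cap\mathbb{B}_\delta(x)$ is unchanged; therefore the ambient $(\varepsilon,\delta)$-inequality restricts to the same inequality for all relative proximal normals, and superregularity of $A$, and likewise of $B$, passes from $X$ down to $L$.

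With these two points in place I would apply the strongly regular theorem inside $L$ to $(y_n)$: since $y_0=P_Lx_0$ is sufficiently close to $x$, $(y_n)$ converges $R$-linearly to some $\bar y\in A\cap B$. Translating back, $x_n=y_n+q_0\to\bar y+q_0=:\bar x$ at the same $R$-linear rate, because $\|x_n-\bar x\|=\|y_n-\bar y\|$. Finally I would identify $\bar x$ directly: since $\bar y\in A\cap B$ we have $P_A\bar x=P_A(P_L\bar x)=P_A\bar y=\bar y$ and, similarly, $P_B(R_A\bar x)=\bar y$, so $R_A\bar x=\bar y-q_0$, $R_BR_A\bar x=\bar y+q_0=\bar x$, and thus $T_{A,B}\bar x=\bar x$, i.e. $\bar x\in\Fix T_{A,B}$; moreover $P_A\bar x=P_B\bar x=\bar y=\bar x-q_0=\bar x-(x_0-P_Lx_0)\in A\cap B$, which is the ``Moreover'' clause and shows $P_A\bar x\equiv P_B\bar x$ solves the feasibility problem.

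I expect the main obstacle to be the first step: pinning down rigorously that the orthogonal displacement is genuinely invariant under the full averaged operator $T_{A,B}$ (not merely under a single reflection), that the shadow sequence $P_Lx_n$ obeys exactly the in-$L$ Douglas--Rachford recursion, and, in the same vein, that superregularity and the proximal normal cone restrict correctly to $L$. Once the splitting is established, the remainder is an invocation of the strongly regular theorem together with elementary fixed-point arithmetic.
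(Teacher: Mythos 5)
Your proposal is correct and is essentially the argument of the cited source: this survey states Phan's Theorem 4.7 without reproducing its proof, and Phan's own proof is exactly your affine-hull reduction --- the orthogonal displacement $x-P_Lx$ is invariant under $T_{A,B}$ because $P_A=P_A\circ P_L$ and $P_B=P_B\circ P_L$, affine-hull regularity is strong regularity relative to $L$, superregularity restricts to $L$, and the strongly regular result (\cite[Theorem 4.3]{Phan}, quoted above) is then applied inside $L$, with the identity $P_A\overline{x}\equiv P_B\overline{x}=\overline{x}-(x_0-P_Lx_0)\in A\cap B$ recovered by the same fixed-point computation you give. The same reduction mechanism is also the content of the Dao--Phan affine reduction theorem \cite[Theorem 5.25]{dao2016linear} quoted later in this survey, so no gap remains to be filled.
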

	Phan also provided a more detailed description of the region of convergence, and extended the analysis into the convex setting.
	
	In 2016 \cite{dao2016linear}, Dao and Phan went on to consider the more general framework of cyclic relaxed projection methods for the feasibility problem of $m$ sets $\{C_i\}_{i \in I}$, $I=\{1,\dots,m\}$ where the sequence is defined in terms of $l$ operators
	\begin{equation}\label{eqn:cyclicoperators}
	T_{nl+j}:=T_j \quad \text{and}\quad x_n := T_n x_{n-1} \quad \text{with}\quad J:=\{1,\dots,l\}.
	\end{equation}
	Where we have modified the notation to be consistent with \eqref{eq:defRS}, Dao and Phan considered the following cyclic generalized DR algorithm defined by \eqref{eqn:cyclicoperators} and the following. For every $j \in J$, let $\mu_j, \gamma_j \in [0,2[$, and $\lambda_j \in ]0,1[$, and $s_j,t_j \in I$ such that $s_j \neq t_j$ and 
	\begin{align*}
	I =& \{s_j | j \in J \} \cup \{t_j | j \in J\}, \\
	T_j :=& (1-\lambda_j)\Id +\lambda_j R_{C_{t_j}}^{\mu_j} R_{C_{s_j}}^{\gamma_j},
	\end{align*}
	where $R_{C_j}^{\mu_j}$ is defined as in \eqref{eq:defRS}. The convergence results are as follows.
	\begin{theorem}{Linear convergence of cyclic generalized DR (Dao \& Phan, 2016 \cite[Theorem 5.21]{dao2016linear})} Let $I:=\{1,\dots,m\}$ and $x \in \cap_{i\in I}C_i$. Suppose $\{C_i\}_{i \in I}$ is superregular at $x$ and linearly regular around $x$ and that $\{C_{s_j},C_{t_j}\}$ is strongly regular at $x$ for every $j \in J$. Then when started at a point $x_0$ sufficiently close to $x$, the cyclic generalized DR sequence generated by $(T_j)_{j \in J}$ converges $R$-linearly to a point $\overline{x}\in \cap_{i\in I}C_i$.
	\end{theorem}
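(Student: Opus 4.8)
The plan is to follow the standard template for local linear convergence of (possibly nonconvex) cyclic projection-type schemes: combine, for each building block $T_j$, a one-step quasi-firm-nonexpansiveness estimate with a one-step lower bound relating the displacement $\|z-T_jz\|$ to $\max\{d_{C_{s_j}}(z),d_{C_{t_j}}(z)\}$, chain these over a full cycle $T_l\cdots T_1$, and then use linear regularity of $\{C_i\}_{i\in I}$ to convert the accumulated displacement into control of $d_{\cap_{i\in I}C_i}$. First I would fix the quantitative constants in the correct order. Strong regularity of each pair $\{C_{s_j},C_{t_j}\}$ at $x$ yields, on a sufficiently small ball $\mathbb{B}_\delta(x)$, an angle bound $\langle u,v\rangle\ge -c_j$ with $c_j\in[0,1[$ for unit proximal normals $u\in N_{C_{s_j}}$, $v\in N_{C_{t_j}}$ at nearby points (robustness of transversality in finite dimensions); linear regularity of $\{C_i\}_{i\in I}$ around $x$ supplies a modulus $\kappa$ on $\mathbb{B}_\delta(x)$; and superregularity lets me choose $\varepsilon>0$ as small as needed with every $C_i$ being $(\varepsilon,\delta)$-regular at $x$. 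These must be chosen in the order: first $c_j$ and $\kappa$; then $\varepsilon$ small relative to the $c_j$, $\kappa$, and the $\lambda_j$; then $\delta$; finally an initial radius $\delta'\ll\delta$.

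Next I would prove the two per-operator estimates on $\mathbb{B}_\delta(x)$ (shrunk so that images and the projections appearing in the estimates stay inside the ball). For \textbf{(a)}, each relaxed reflection $R_C^\gamma$ with $\gamma\in[0,2[$ is nonexpansive in the exact convex case (it is a convex combination of $P_C$ and $R_C$, or of $P_C$ and $\Id$), so $T_j=(1-\lambda_j)\Id+\lambda_j R_{C_{t_j}}^{\mu_j}R_{C_{s_j}}^{\gamma_j}$ is $\lambda_j$-averaged and hence, at any common point $p\in C_{s_j}\cap C_{t_j}$, satisfies $\|T_jz-p\|^2\le\|z-p\|^2-\tfrac{1-\lambda_j}{\lambda_j}\|z-T_jz\|^2$; in the superregular setting this degrades to the same inequality with coefficient $\tfrac{1-\lambda_j}{\lambda_j}-O(\varepsilon)$, the error coming from the quantified defect of firm nonexpansiveness of the projections furnished by $(\varepsilon,\delta)$-regularity. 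For \textbf{(b)}, I would show $\|z-T_jz\|\ge(\eta_j-O(\varepsilon))\max\{d_{C_{s_j}}(z),d_{C_{t_j}}(z)\}$ with $\eta_j>0$ depending on $c_j,\mu_j,\gamma_j,\lambda_j$: this is where transversality enters, ensuring that a point far from either set is genuinely moved, and it must be verified uniformly over all configurations of $z$ relative to the two sets by linearizing the projections via $(\varepsilon,\delta)$-regularity and invoking the angle bound $\langle u,v\rangle\ge -c_j$.

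Then I would chain over one cycle. Write $z_0=x_{nl}$, $z_j=T_jz_{j-1}$, $z_l=x_{(n+1)l}$, and take $p:=P_{\cap_{i\in I}C_i}(z_0)$, which is a common fixed point of all $T_j$ and lies near $x$ when $z_0$ does. Telescoping \textbf{(a)} gives $\|z_l-p\|^2\le\|z_0-p\|^2-\beta\sum_{j=1}^l\|z_{j-1}-z_j\|^2$ for some $\beta>0$ once $\varepsilon$ is small (after possibly decreasing $\beta$ we may take $\beta<\kappa^2C_1^2$ below), which in particular keeps the whole orbit in $\mathbb{B}_\delta(x)$ by an induction on cycles, the cycle displacements being dominated by a geometric series whose sum is controllable by $\delta'$. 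For each $i\in I$ choose $j$ with $i\in\{s_j,t_j\}$; then \textbf{(b)} together with $d_{C_i}(z_0)\le d_{C_i}(z_{j-1})+\|z_0-z_{j-1}\|$ and $\|z_0-z_{j-1}\|\le\sqrt{l}\,\bigl(\sum_j\|z_{j-1}-z_j\|^2\bigr)^{1/2}$ yields $\max_{i\in I}d_{C_i}(z_0)\le C_1\bigl(\sum_j\|z_{j-1}-z_j\|^2\bigr)^{1/2}$, and linear regularity then gives $d_{\cap_{i\in I}C_i}(z_0)\le\kappa C_1\bigl(\sum_j\|z_{j-1}-z_j\|^2\bigr)^{1/2}$. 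Combining with the telescoped inequality (and using $\|z_0-p\|=d_{\cap_{i\in I}C_i}(z_0)$) gives $\|z_l-p\|^2\le\bigl(1-\beta/(\kappa^2C_1^2)\bigr)d_{\cap_{i\in I}C_i}(z_0)^2$, so, since $d_{\cap_{i\in I}C_i}(z_l)\le\|z_l-p\|$, we obtain $d_{\cap_{i\in I}C_i}(x_{(n+1)l})\le\rho\,d_{\cap_{i\in I}C_i}(x_{nl})$ with $\rho:=\sqrt{1-\beta/(\kappa^2C_1^2)}<1$.

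Finally, this $Q$-linear decay of $d_{\cap_{i\in I}C_i}$ along cycles upgrades to $R$-linear convergence of the whole sequence: $\|x_{(n+1)l}-x_{nl}\|\le\|z_l-p\|+\|p-z_0\|\le(1+\rho)\rho^n d_{\cap_{i\in I}C_i}(x_0)$ is summable, so $(x_{nl})_{n}$ is Cauchy with some limit $\overline{x}$ and $\|x_{nl}-\overline{x}\|\le\tfrac{1+\rho}{1-\rho}\rho^n d_{\cap_{i\in I}C_i}(x_0)$; closedness of $\cap_{i\in I}C_i$ and $d_{\cap_{i\in I}C_i}(x_{nl})\to 0$ force $\overline{x}\in\cap_{i\in I}C_i$; and each intermediate iterate $x_{nl+j}$ lies within $\sqrt{l}\,\bigl(\sum_j\|z_{j-1}-z_j\|^2\bigr)^{1/2}\to 0$ of $x_{nl}$, so $x_n\to\overline{x}$ $R$-linearly. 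I expect the main obstacle to be estimate \textbf{(b)}: producing a configuration-uniform lower bound on $\|z-T_jz\|$ by $\max\{d_{C_{s_j}}(z),d_{C_{t_j}}(z)\}$ with the right dependence on the transversality modulus $c_j$ and on the (possibly zero) relaxation parameters $\mu_j,\gamma_j$ and on $\lambda_j$; once that is in hand, the cycle chaining, the passage through linear regularity, and the $R$-linear conclusion are essentially mechanical.
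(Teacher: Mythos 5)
The survey itself gives no argument for this result---it only cites Dao and Phan---so the comparison is with the strategy of that source, and your plan is essentially it: per-operator quasi-nonexpansiveness plus a per-operator coercivity bound, chained over a cycle and converted via linear regularity of $\{C_i\}_{i\in I}$ into a $Q$-linear decay of $d_{\cap_i C_i}$ along cycles, then upgraded to $R$-linear convergence of the whole sequence. The differences are in packaging. Dao and Phan do not prove your estimate \textbf{(b)} in the form of a direct, configuration-uniform lower bound $\|z-T_jz\|\gtrsim\max\{d_{C_{s_j}}(z),d_{C_{t_j}}(z)\}$; instead they split it into two facts established separately: (i) each $T_j$ is a \emph{linearly regular operator} near $x$, i.e.\ $d_{\Fix T_j\cap W}(z)\leq \kappa_j\|z-T_jz\|$ on a neighbourhood $W$, and (ii) under strong regularity of $\{C_{s_j},C_{t_j}\}$ the local fixed points of $T_j$ lie in $C_{s_j}\cap C_{t_j}$, so that $d_{\Fix T_j\cap W}\geq d_{C_{s_j}\cap C_{t_j}}\geq\max\{d_{C_{s_j}},d_{C_{t_j}}\}$; they then feed these, together with the $(\varepsilon,\delta)$-regularity-degraded quasi firm nonexpansiveness, into an abstract convergence theorem for (quasi)cyclic compositions of such operators. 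Your \textbf{(b)} is exactly the conjunction of (i) and (ii), and your own assessment is right that this is where all the work sits; attempting it by ``linearizing the projections and invoking the angle bound'' directly is harder than routing through the local fixed-point characterization, because without (ii) the displacement only controls the distance to $\Fix T_j$, which for true reflections can be strictly larger than the intersection. One further technical point: the degradation coming from $(\varepsilon,\delta)$-regularity enters the quasi-nonexpansiveness estimate as a multiplicative inflation $(1+\tilde\varepsilon)\|z-p\|^2$ rather than only as a reduction of the coefficient $\tfrac{1-\lambda_j}{\lambda_j}$ of the displacement term, so after telescoping the contraction factor is of the form $\rho^2=(1+\tilde\varepsilon)^l-\beta/(\kappa^2C_1^2)$; your prescribed order of fixing constants ($c_j,\kappa$ first, then $\varepsilon$, then $\delta,\delta'$) is precisely what makes this less than one, but the inequality should be stated in that multiplicative form for the bookkeeping to close.
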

	
	\begin{theorem}{Affine reduction for generalized DR sequences (Dao \& Phan, 2016 \cite[Theorem 5.25]{dao2016linear})}
		Let $A,B$ be closed, $x \in A \cap B$, and $L:=\text{aff}(A\cup B)$. Suppose $\{A,B\}$ is superregular and affine-hull regular at $x$. Let $(x_n)_{n \in \mathbb{N}}$ be defined by $x_{n+1}:=\left((1-\lambda)\Id + R_B^\mu P_A^\gamma \right)x_n$ where $\mu,\gamma \in [0,2[$ and $\lambda \in ]0,1[$. Then the following hold:
		\begin{enumerate}
			\item If $\gamma=\mu=0$, then, whenever $P_L x_0$ is sufficiently close to $x$, $(x_n)_{n \in \mathbb{N}}$ converges $R$-linearly to a point $\overline{x} \in \Fix T$ with $P_A \overline{x} = P_B \overline{x} \in A \cap B$.
			\item If either $\lambda > 0$ or $\mu > 0$, then, whenever $P_L x_0$ is sufficiently close to $x$, $(x_n)_{n \in \mathbb{N}}$ converges $R$-linearly to a point $\overline{x} \in A \cap B$.
		\end{enumerate}
	\end{theorem}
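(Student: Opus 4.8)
The plan is to reduce the claim to the two-set instance of the preceding theorem, \cite[Theorem 5.21]{dao2016linear}, carried out inside the affine hull $L$, where affine-hull regularity is exactly ordinary strong regularity. First I would normalise: since $x\in A\cap B\subseteq A\cup B\subseteq L$, translating by $-x$ lets us assume $x=0$, so that $L$ is a linear subspace with $A,B\subseteq L$. Write $w=P_Lw+(\Id-P_L)w=:w^L+w^\perp$ for the associated orthogonal decomposition.

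The core is an affine-reduction identity for relaxed reflections. For a closed $C\subseteq L$ one has $P_Cw=P_C(P_Lw)$, because in $\|w-c\|^2=\|w-P_Lw\|^2+2\langle w-P_Lw,\,P_Lw-c\rangle+\|P_Lw-c\|^2$ the middle term vanishes for every $c\in C\subseteq L$. Substituting into $R_C^\gamma w=(2-\gamma)P_Cw-(1-\gamma)w$ gives $R_C^\gamma w=R_C^\gamma(w^L)-(1-\gamma)w^\perp$ with $R_C^\gamma(w^L)\in L$. Applying this first to $R_A^\gamma$ and then to $R_B^\mu$, and inserting into $T=(1-\lambda)\Id+\lambda R_B^\mu R_A^\gamma$, I would obtain the block form
\[
Tw \;=\; T_L(w^L)\;+\;c\,w^\perp,\qquad T_L:=(1-\lambda)\Id+\lambda R_B^\mu R_A^\gamma,\quad c:=(1-\lambda)+\lambda(1-\mu)(1-\gamma),
\]
where $T_L$ is the \emph{same} generalized Douglas--Rachford operator for $(A,B)$ but now acting inside $L$. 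Hence the iteration decouples: $x_n^L=T_L^n(P_Lx_0)$ and $x_n^\perp=c^n(\Id-P_L)x_0$.

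Next I would run the earlier theorem in $L$. One checks that superregularity of $A$ and $B$ at $0$ passes to the smaller ambient space $L$ (the prox-normal cone relative to $L$ sits inside the one relative to $X$, so the defining inequality only weakens), and that affine-hull regularity of $\{A,B\}$ is precisely strong regularity of $\{A,B\}$ computed in $L$ --- which also yields linear regularity around $0$ there. So \cite[Theorem 5.21]{dao2016linear} applies to $T_L$: provided $P_Lx_0$ is close enough to $0$, the sequence $x_n^L$ converges $R$-linearly, say at rate $\rho<1$, to some $\overline x^L\in A\cap B$. It remains to assemble the two components, using that a short computation over the open parameter box gives $c=1$ exactly when $\gamma=\mu=0$ and $|c|<1$ otherwise. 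In the case $\gamma=\mu=0$ we have $c=1$, so $x_n^\perp$ is constant and $x_n\to\overline x:=\overline x^L+(\Id-P_L)x_0$; since $\overline x^L\in A\cap B$ is fixed by $T_L$, $T\overline x=T_L(\overline x^L)+(\Id-P_L)x_0=\overline x$, so $\overline x\in\Fix T$, and $P_A\overline x=P_A(\overline x^L)=\overline x^L=P_B(\overline x^L)=P_B\overline x\in A\cap B$. In the complementary case $|c|<1$, so $x_n^\perp\to0$ geometrically and $x_n\to\overline x^L\in A\cap B$, with $R$-linear rate $\max\{\rho,|c|\}$.

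The algebraic identity for $R_C^\gamma$ and the scalar analysis of $c$ are routine. I expect the main obstacle to lie in the transfer step: checking rigorously that superregularity (which is defined through prox-normal cones and so is sensitive to the ambient space) is inherited by $L$, that affine-hull regularity matches strong regularity within $L$ in exactly the form \cite[Theorem 5.21]{dao2016linear} requires, and that the neighbourhood ``sufficiently close to $x$'' supplied there is the one controlled by the hypothesis on $P_Lx_0$. Everything else is bookkeeping with the orthogonal splitting.
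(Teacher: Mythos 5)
This survey states the theorem without proof (it is quoted directly from Dao--Phan), and your blind argument is correct and is essentially the affine-reduction proof of the cited source: translate $x$ to the origin, use $P_Cw=P_C(P_Lw)$ for $C\subseteq L$ to get the block form $Tw=T_L(w^L)+c\,w^\perp$ with $c=(1-\lambda)+\lambda(1-\mu)(1-\gamma)$, invoke the two-set instance of \cite[Theorem 5.21]{dao2016linear} inside $L$ (where affine-hull regularity is exactly strong regularity and implies the needed linear regularity, and superregularity passes to $L$ since the prox-normal cone relative to $L$ is contained in the ambient one), and finish with the scalar analysis $c=1$ iff $\gamma=\mu=0$ and $|c|<1$ otherwise. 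Note only that you have tacitly (and correctly) repaired two transcription slips in the statement as quoted here: the iteration should read $x_{n+1}=\bigl((1-\lambda)\Id+\lambda R_B^\mu R_A^\gamma\bigr)x_n$, and the condition in item 2 should be ``$\gamma>0$ or $\mu>0$'' rather than ``$\lambda>0$ or $\mu>0$.''
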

	
	\subsubsection{Other convergence results}
	
	Numerous other investigations of convergence for DR have also been undertaken. In 2014 Bauschke and Noll proved local convergence to a fixed point in the case where $A$ and $B$ are finite unions of convex sets \cite{BN14}. In 2016, Bauschke and Dao provided various sufficient conditions for finite convergence of the DR sequence \cite{BD17}. 
	
	\subsubsection{Further variants}	
	
	If one considers the spiralling behaviour characteristic of local convergence of DR, it is very natural to seek faster convergence by taking a step towards the center of the spiral. This intuition has given birth to the notion of \emph{circumcentering} the method \cite{circumcentering,behling2018linear}.		
	
	\subsubsection{Nonconvex minimization}\label{nonconvex_minimization}	
	
	In 2014 Patrinos, Stella and Bemporad introduced the so-called \emph{Douglas--Rachford envelope} whose stationary points correspond to solutions for the problem of minimizing a sum of two convex functions $f+g$ subject to linear constraints \cite{patrinos2014douglas}. 
	
	In 2015, motivated by properties of the Douglas--Rachford envelope, Li and Pong introduced the Douglas--Rachford merit function \cite{LP}:
	\begin{equation*}
	\mathfrak{D}_\eta (y,z,x) := f(y)+g(z)-\frac{1}{2\eta}\|y-z\|^2 + \frac{1}{\eta}\langle x-y,z-y\rangle.
	\end{equation*}
	Li and Pong analysed the limiting characteristics of $\mathfrak{D}_\eta (y_n,z_n,x_n)$ where $y_n,z_n,x_n$ are either as in \eqref{LP_notation} or are obtained from a modified variant where $x_0 \in X$ and
	\begin{align}\label{LiandPong}
	\begin{cases}
	y_{n+1} &= \frac{1}{1+\eta}\left(x_n+\eta P_A(x_n) \right)\\
	z_{n+1} &\in \underset{z \in B}{\argmin}\left \{\|2y_{n+1}-x_n-z_n \|  \right \}\\
	x_{n+1} &= x_n + (z_{n+1}-y_{n+1})
	\end{cases},
	\end{align}	
	which arises from applying \eqref{LP_notation} to the problem of minimizing $\frac{1}{2}d_A^2(x)$ subject to $x \in B$, where $A$ is convex but $B$ may not be. They showed the following.
	\begin{theorem}{Global subsequential convergence (Li \& Pong, 2015 \cite[Theorem 1]{LP})}\label{LPthm1}
		Let $g$ be proper and closed, $f$ have Lipschitz continuous gradient whose Lipschitz continuity modulus is bounded by $L$. Choose $\nu \in \mathbb{R}$ so that $f+\frac{\nu}{2}\|\cdot \|^2$ is convex. Suppose that $\eta$ is chosen so that $(1+\eta L)^2 + \frac{5\eta \nu}{2}-\frac{3}{2}<0$. Let $y_n,z_n,x_n$ be as in \ref{LP_notation}. Then $\{\mathfrak{D}_\eta \left(y_n,z_n,x_n \right) \}_{n\geq 1}$ is nonincreasing. Moreover, if a cluster point of $\left(y_n,z_n,x_n \right)$ exists, then 
		\begin{equation}\label{LiPong10}
		\underset{n\rightarrow \infty}{\lim} \|x_{n+1}-x_n \| = \underset{n\rightarrow \infty}{\lim} \|z_{n+1}-y_n \| = 0,
		\end{equation}
		and, for any cluster point $(\overline{y},\overline{z},\overline{x})$, we have $\overline{z}=\overline{y}$ and $0 \in \nabla f(\overline{z})+\partial g(\overline{z})$.
	\end{theorem}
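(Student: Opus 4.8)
The plan is the standard ``sufficient decrease for a merit function, then subsequential limit'' argument; essentially all of the work is in extracting a descent inequality whose coefficient stays negative over the stated parameter window. First I would record what \eqref{LP_notation} gives for free: since $y_{n+1}=\operatorname{prox}_{\eta f}(x_n)$ and $f$ is differentiable, optimality of the first subproblem reads $x_n=y_{n+1}+\eta\nabla f(y_{n+1})$, while $z_{n+1}=\operatorname{prox}_{\eta g}(2y_{n+1}-x_n)$ yields both the inclusion $\tfrac1\eta(2y_{n+1}-x_n-z_{n+1})\in\partial g(z_{n+1})$ (with $\partial$ the limiting subdifferential, since $g$ need not be convex) and the prox inequality $g(z_{n+1})+\tfrac1{2\eta}\|z_{n+1}-(2y_{n+1}-x_n)\|^2\le g(z)+\tfrac1{2\eta}\|z-(2y_{n+1}-x_n)\|^2$ for every $z$. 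Completing the square also lets one rewrite $\mathfrak{D}_\eta(y,z,x)=f(y)+\tfrac1{2\eta}\|y-x\|^2+g(z)-\tfrac1{2\eta}\|z-x\|^2$. Subtracting the first-subproblem optimality at indices $n-1$ and $n$ and using $x_n-x_{n-1}=z_n-y_n$ then yields the key identity $z_n-y_{n+1}=\eta(\nabla f(y_{n+1})-\nabla f(y_n))$, so $\|z_n-y_{n+1}\|\le\eta L\|y_{n+1}-y_n\|$; the same identity, with the index shifted, gives $\|x_{n+1}-x_n\|=\|z_{n+1}-y_{n+1}\|\le(1+\eta L)\|y_{n+2}-y_{n+1}\|$ and $\|z_n-y_n\|\le(1+\eta L)\|y_{n+1}-y_n\|$.

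The crux is the sufficient-decrease inequality. Split the one-step change $\mathfrak{D}_\eta(y_{n+1},z_{n+1},x_{n+1})-\mathfrak{D}_\eta(y_n,z_n,x_n)$ into a $y$-move, a $z$-move and an $x$-move. Because $f+\tfrac{\nu}{2}\|\cdot\|^2$ is convex, $y\mapsto f(y)+\tfrac1{2\eta}\|y-x_n\|^2$ is $(\tfrac1\eta-\nu)$-strongly convex (the hypothesis forces $\eta\nu<1$, so this modulus is positive) and $y_{n+1}$ is its minimiser, so the $y$-move lowers the merit function by at least $\tfrac12(\tfrac1\eta-\nu)\|y_{n+1}-y_n\|^2$. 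The $z$-move, after one substitutes the prox inequality and completes squares, contributes at most $\tfrac1\eta\|z_n-y_{n+1}\|^2-\tfrac1\eta\|z_{n+1}-y_{n+1}\|^2$. The $x$-move is exact: $\mathfrak{D}_\eta$ is affine in its third argument and $x_{n+1}-x_n=z_{n+1}-y_{n+1}$, so it contributes precisely $\tfrac1\eta\|z_{n+1}-y_{n+1}\|^2$. Adding the three, the $\|z_{n+1}-y_{n+1}\|^2$ terms cancel, and inserting $\|z_n-y_{n+1}\|^2\le\eta^2L^2\|y_{n+1}-y_n\|^2$ leaves $\mathfrak{D}_\eta(y_{n+1},z_{n+1},x_{n+1})-\mathfrak{D}_\eta(y_n,z_n,x_n)\le-c\,\|y_{n+1}-y_n\|^2$ with $c=\tfrac1{2\eta}\bigl(1-\eta\nu-2\eta^2L^2\bigr)$, and the stated restriction $(1+\eta L)^2+\tfrac{5\eta\nu}2-\tfrac32<0$ makes $c>0$. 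I expect the fiddly part to be doing this accounting with constants sharp enough to reproduce precisely that parameter window rather than the cruder estimate just sketched; everything else is routine.

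From here the conclusions follow in the usual way. The descent inequality shows $\{\mathfrak{D}_\eta(y_n,z_n,x_n)\}_{n\ge1}$ is nonincreasing, hence convergent to some $\mathfrak{D}^\ast\in[-\infty,\infty)$. If a cluster point $(\bar y,\bar z,\bar x)$ exists, pass to a subsequence converging to it; lower semicontinuity of $g$ together with continuity of every other term gives $\mathfrak{D}^\ast\ge\mathfrak{D}_\eta(\bar y,\bar z,\bar x)$, which must be finite (otherwise $\mathfrak{D}^\ast=+\infty$, impossible since $\mathfrak{D}_\eta(y_1,z_1,x_1)<\infty$), so $g(\bar z)<\infty$ and $\mathfrak{D}^\ast\in\mathbb{R}$. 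Telescoping the descent inequality then forces $\sum_n\|y_{n+1}-y_n\|^2<\infty$, whence $\|y_{n+1}-y_n\|\to0$, and the Lipschitz estimates of the first paragraph upgrade this to $\|x_{n+1}-x_n\|\to0$ and $\|z_{n+1}-y_n\|\le\|z_{n+1}-y_{n+1}\|+\|y_{n+1}-y_n\|\to0$, which is \eqref{LiPong10}. Since $z_n-y_n=x_n-x_{n-1}\to0$, every cluster point satisfies $\bar z=\bar y$.

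It remains to pass to the limit in the two optimality conditions along the chosen subsequence $(y_{n_k},z_{n_k},x_{n_k})\to(\bar y,\bar z,\bar x)$, using also $y_{n_k-1}\to\bar y$ and $x_{n_k-1}\to\bar x$ (consecutive gaps vanish). From $x_{n_k-1}=y_{n_k}+\eta\nabla f(y_{n_k})$ and continuity of $\nabla f$ one obtains $\nabla f(\bar y)=\tfrac1\eta(\bar x-\bar y)$. For the second subproblem, $\tfrac1\eta(2y_{n_k}-x_{n_k-1}-z_{n_k})\in\partial g(z_{n_k})$ with left side tending to $\tfrac1\eta(2\bar y-\bar x-\bar z)=\tfrac1\eta(\bar y-\bar x)$; to push this through closedness of the limiting subdifferential I need $g(z_{n_k})\to g(\bar z)$, which I would get by a sandwich --- lower semicontinuity yields $\liminf g(z_{n_k})\ge g(\bar z)$, and testing the prox inequality at $z=\bar z$ yields $\limsup g(z_{n_k})\le g(\bar z)$, since the quadratic terms on its two sides share a common limit (as $2y_{n_k}-x_{n_k-1}\to2\bar y-\bar x$ and $z_{n_k}\to\bar z$). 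Hence $\tfrac1\eta(\bar y-\bar x)\in\partial g(\bar z)$, and adding the two limiting relations (recall $\bar z=\bar y$) gives $0\in\nabla f(\bar z)+\partial g(\bar z)$. The one genuinely non-cosmetic subtlety to flag is that, $g$ being merely proper and closed, every subdifferential here is the limiting one, which is exactly why the $g(z_{n_k})\to g(\bar z)$ step is indispensable.
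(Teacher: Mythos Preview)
The survey does not prove this theorem; it merely quotes it from Li and Pong \cite[Theorem~1]{LP}. Your argument is correct and is essentially the one given in that reference: a sufficient-decrease inequality for $\mathfrak{D}_\eta$ in terms of $\|y_{n+1}-y_n\|^2$, obtained from the optimality of the two prox steps, followed by the standard subsequential limit argument using closedness of the limiting subdifferential (with the $g(z_{n_k})\to g(\bar z)$ verification via the prox inequality, exactly as you describe).

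One remark on the constants: your decrease coefficient $c=\tfrac{1}{2\eta}(1-\eta\nu-2\eta^2L^2)$ is in fact positive on a region strictly containing the stated parameter window, so no sharpening is required. The stated hypothesis rewrites as $2\eta^2L^2+4\eta L+5\eta\nu<1$, and since a $(-\nu)$-strongly convex function with $L$-Lipschitz gradient must satisfy $-\nu\le L$, one has $4\eta L+4\eta\nu\ge0$, whence $2\eta^2L^2+\eta\nu<1$, i.e.\ $c>0$. Thus your ``cruder'' estimate already suffices for the theorem as stated.
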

	
	\begin{theorem}{Global convergence of the whole sequence (Li \& Pong, 2015 \cite[Theorem 2]{LP})}Let $f,g,l,L,x_n,y_n,z_n,\eta$ be as in theorem~\ref{LPthm1}. Additionally suppose $f,g$ are algebraic and that $\{(y_n,z_n,x_n) \}$ has a cluster point $(\overline{y},\overline{z},\overline{x})$. Then the sequence $\{(y_n,z_n,x_n) \}$ is convergent.		
	\end{theorem}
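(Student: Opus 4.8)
The plan is to invoke the abstract convergence scheme for descent methods satisfying a Kurdyka--Łojasiewicz (KL) inequality. Abbreviate $u_n := (y_n,z_n,x_n)$ and $H := \mathfrak{D}_\eta$, so the claim becomes that $(u_n)$ has finite length, i.e. $\sum_n \|u_{n+1}-u_n\| < \infty$. Theorem~\ref{LPthm1} already supplies part of the required data: $H(u_n)$ is nonincreasing, $\|x_{n+1}-x_n\| = \|z_{n+1}-y_{n+1}\| \to 0$, and \eqref{LiPong10} holds. Since $f$ is continuous and $g$ is closed, $H$ is lower semicontinuous; combined with monotonicity and the hypothesis that a cluster point $\overline u = (\overline y,\overline z,\overline x)$ exists, this forces $H(u_n) \downarrow \ell$ with $\ell = H(\overline u)$ finite, provided one also checks the continuity condition $H(u_{n_k}) \to H(\overline u)$ along the subsequence $u_{n_k} \to \overline u$. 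I would obtain this last point by using that $z_{n_k}$ is the minimizer of $z \mapsto g(z) + \tfrac{1}{2\eta}\|2y_{n_k}-x_{n_k-1}-z\|^2$: comparing its value with that at $\overline z$ and passing to the limit, using $x_{n_k-1} \to \overline x$ (which follows from $\|x_{n+1}-x_n\| \to 0$), gives $\limsup_k g(z_{n_k}) \le g(\overline z)$, and lower semicontinuity supplies the matching liminf.

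Next I would verify the two quantitative ingredients. For the \emph{sufficient-decrease} property, I would revisit the estimates behind Theorem~\ref{LPthm1}: under $(1+\eta L)^2 + \tfrac{5\eta\nu}{2} - \tfrac32 < 0$ these actually yield $H(u_n) - H(u_{n+1}) \ge a\|x_{n+1}-x_n\|^2$ for some $a>0$. For the \emph{relative-error} bound, I would differentiate $H$ coordinate-wise and substitute the optimality conditions of the two subproblems, namely $\tfrac{1}{\eta}(x_n - y_{n+1}) = \nabla f(y_{n+1})$ from $y_{n+1} = \text{prox}_{\eta f}(x_n)$ and $\tfrac{1}{\eta}(2y_{n+1}-x_n-z_{n+1}) \in \partial g(z_{n+1})$ from $z_{n+1} \in \text{prox}_{\eta g}(2y_{n+1}-x_n)$. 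The bilinear and quadratic terms of $\mathfrak{D}_\eta$ are arranged precisely so that, after substitution and using $x_{n+1} = x_n + z_{n+1} - y_{n+1}$, each of $\partial_y H$, $\partial_z H$, $\partial_x H$ at $u_{n+1}$ collapses to a scalar multiple of $x_{n+1}-x_n$; hence $\operatorname{dist}(0,\partial H(u_{n+1})) \le b\|x_{n+1}-x_n\|$ for some $b>0$. Finally, since $y_{n+1}$ and $z_{n+1}$ depend on $x_n$ through the (Lipschitz) proximal maps of $f$ and of $g + \tfrac{\nu}{2}\|\cdot\|^2$, one has $\|y_{n+1}-y_n\| + \|z_{n+1}-z_n\| \le C\|x_n - x_{n-1}\|$, which is what lets us trade control of $\|x_{n+1}-x_n\|$ for control of the full increment $\|u_{n+1}-u_n\|$.

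It remains to feed this into the KL machinery. Because $f$ and $g$ are algebraic and every other term of $H$ is a polynomial, $H$ is semialgebraic, hence satisfies the KL inequality at every point, in particular at $\overline u$. The abstract convergence theorem for KL descent sequences then applies: the sufficient-decrease inequality, the relative-error bound, and the continuity condition $H(u_{n_k}) \to H(\overline u)$, together with the KL inequality at $\overline u$, force $\sum_n \|x_{n+1}-x_n\| < \infty$; summing $\|y_{n+1}-y_n\| + \|z_{n+1}-z_n\| \le C\|x_n - x_{n-1}\|$ then gives $\sum_n \|u_{n+1}-u_n\| < \infty$. Thus $(u_n)$ is Cauchy and converges, necessarily to the cluster point $\overline u$.

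I expect the relative-error step to be the main obstacle: one must select the subgradient of $g$ carefully (the limiting subdifferential calculus is needed since $g$ is only proper and closed, hence possibly nonconvex), and then track the cancellations among the $-\tfrac{1}{2\eta}\|y-z\|^2$ and $\tfrac{1}{\eta}\langle x-y,z-y\rangle$ terms so that what survives is a multiple of the \emph{increment} $x_{n+1}-x_n$ rather than of an iterate itself. The continuity condition is a close second difficulty, since exploiting the argmin characterization of $z_{n+1}$ to pass the nonsmooth term $g(z_{n_k})$ to the limit must be handled with care.
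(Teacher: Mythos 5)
The survey itself contains no proof of this statement---it is quoted from Li and Pong \cite{LP}---and your sketch follows essentially the same route as the original argument: treat the Douglas--Rachford merit function $\mathfrak{D}_\eta$ as a descent (Lyapunov) function, extract a sufficient-decrease inequality from the estimates behind Theorem~\ref{LPthm1}, obtain a relative-error bound by substituting the optimality conditions of the two proximal subproblems (your computation is right: with $x_{n+1}=x_n+z_{n+1}-y_{n+1}$, all three partial subgradients of $\mathfrak{D}_\eta$ at $(y_{n+1},z_{n+1},x_{n+1})$ collapse to multiples of $x_{n+1}-x_n$), verify the continuity condition along the convergent subsequence via the argmin comparison for $z_{n_k}$, and invoke the Kurdyka--\L{}ojasiewicz property of the semialgebraic function $\mathfrak{D}_\eta$ within the Attouch--Bolte--Svaiter descent framework to obtain finite length.

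The one step that fails as written is the final trade-off $\|y_{n+1}-y_n\|+\|z_{n+1}-z_n\|\le C\|x_n-x_{n-1}\|$ justified ``because $z_{n+1}$ depends on $x_n$ through the Lipschitz proximal map of $g+\tfrac{\nu}{2}\|\cdot\|^2$'': the parameter $\nu$ concerns $f$, not $g$, and $g$ is only proper and closed, so $\mathrm{prox}_{\eta g}$ is in general set-valued and not Lipschitz. Fortunately this is not needed. Since $z_{n+1}-y_{n+1}=x_{n+1}-x_n$, once $\sum_n\|x_{n+1}-x_n\|<\infty$ the sequence $(x_n)$ converges; $y_{n+1}=\mathrm{prox}_{\eta f}(x_n)$ then converges because this prox map is single-valued and Lipschitz (here the Lipschitz gradient of $f$ and the choice of $\eta$ are what matter), and $z_{n+1}=y_{n+1}+(x_{n+1}-x_n)$ converges as well, necessarily to the given cluster point. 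One further bookkeeping caution: if the decrease estimate behind Theorem~\ref{LPthm1} comes out in terms of the $y$-increments rather than $\|x_{n+1}-x_n\|^2$, the identity $x_n=y_{n+1}+\eta\nabla f(y_{n+1})$ converts between the two only at the cost of a one-step index shift, which must be tracked when the sufficient-decrease and relative-error inequalities are fed into the abstract KL convergence theorem.
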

	
	\begin{theorem}{Convergence of DR splitting method for nonconvex feasibility problems involving two sets (Li \& Pong, 2015 \cite[Theorem 5]{LP})}
		Let $A$ be a nonempty, closed, convex set, and $B$ a nonempty closed set with either $A$ or $B$ compact. Suppose in addition that $0< \eta < \sqrt{3/2}-1$. Then the sequence $\{(y_n,z_n,x_n)\}$, where $y_n,z_n,x_n$ are as in \eqref{LiandPong}, is bounded. Moreover, any cluster point $(\overline{y},\overline{z},\overline{x})$ of the sequence satisfies $\overline{z}=\overline{y}$ and $\overline{z}$ is a stationary point of \eqref{LiandPong}. Additionally, \eqref{LiPong10} holds.
	\end{theorem}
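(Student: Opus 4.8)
The plan is to recognise this statement as a special case of Theorem~\ref{LPthm1} and then supply the one extra ingredient that theorem presumes: existence of a cluster point. To set this up, I would identify the feasibility problem with the composite minimisation scheme \eqref{LP_notation} by taking
\[
f := \tfrac12 d_A^2, \qquad g := \iota_B .
\]
Since $A$ is nonempty, closed and convex, $f$ is convex with $\nabla f = \Id - P_A$ firmly nonexpansive, hence Lipschitz with modulus $L = 1$; since $B$ is nonempty and closed, $g$ is proper and closed. A short computation gives $\text{prox}_{\eta f}(x) = \tfrac{1}{1+\eta}\bigl(x + \eta P_A(x)\bigr)$ and $\text{prox}_{\eta g} = P_B$, so \eqref{LP_notation} specialises exactly to \eqref{LiandPong}. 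Choosing $\nu = 0$ (admissible since $f$ is already convex), the parameter restriction $(1+\eta L)^2 + \tfrac{5\eta\nu}{2} - \tfrac32 < 0$ of Theorem~\ref{LPthm1} reduces to $(1+\eta)^2 < \tfrac32$, i.e.\ $0 < \eta < \sqrt{3/2}-1$ --- precisely the stated hypothesis.

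Granting this, Theorem~\ref{LPthm1} applies without further effort: $\{\mathfrak{D}_\eta(y_n,z_n,x_n)\}$ is nonincreasing, the displacement limits \eqref{LiPong10} hold, and every cluster point $(\overline y,\overline z,\overline x)$ satisfies $\overline z = \overline y$ and $0 \in \nabla f(\overline z) + \partial g(\overline z) = \bigl(\overline z - P_A(\overline z)\bigr) + N_B(\overline z)$. Rearranged, this is $P_A(\overline z) - \overline z \in N_B(\overline z)$, exactly the first-order stationarity condition for $\min_{x\in B}\tfrac12 d_A^2(x)$; combined with $\overline z\in B$ (closedness of $B$ and $z_n\in B$ for all $n$), it says $\overline z$ is a stationary point of \eqref{LiandPong}. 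Hence the entire assertion follows \emph{once the sequence $\{(y_n,z_n,x_n)\}$ is known to be bounded}, since in the finite-dimensional setting at hand boundedness forces a cluster point to exist.

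So the remaining task, and the place where compactness of one of $A$, $B$ enters, is boundedness. The case where \emph{$B$ is compact} is easy: \eqref{LiandPong} rearranges to $x_{n+1} = z_{n+1} + \tfrac{\eta}{1+\eta}\bigl(x_n - P_A(x_n)\bigr)$, and since $z_{n+1}\in B$ is bounded while $\|x_n - P_A(x_n)\| = d_A(x_n) \le \|x_n - a_0\|$ for any fixed $a_0\in A$, one gets $\|x_{n+1}\| \le \tfrac{\eta}{1+\eta}\|x_n\| + C$ with contraction factor $\tfrac{\eta}{1+\eta} < 1$; thus $(x_n)$ is bounded, and then so are $(P_A(x_n))$, $(y_n)$ and $(z_n)$. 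The case where \emph{$A$ is compact} is the genuine obstacle: that rearrangement is no longer a contraction, because $z_{n+1} = P_B(2y_{n+1}-x_n)$ can expand distances when $B$ is nonconvex. Here one must instead couple the monotonicity of $\mathfrak{D}_\eta$ with the sufficient-decrease estimate underpinning Theorem~\ref{LPthm1} --- of the form $\mathfrak{D}_\eta(y_n,z_n,x_n) - \mathfrak{D}_\eta(y_{n+1},z_{n+1},x_{n+1}) \ge \tfrac{c}{\eta}\|x_{n+1}-x_n\|^2$ for some $c>0$ --- and use that completing the square expresses $\mathfrak{D}_\eta$, along the iterates, in terms of $d_A^2(y_{n+1})$ and squared consecutive displacements; the fact that $P_A$ takes values in the compact set $A$ then pins these quantities down, yielding boundedness of $(x_n)$, $(y_n)$ and $(z_n)$ by bookkeeping. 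I expect this extraction of coercivity from compactness --- together with the careful treatment of the possibly set-valued projector $P_B$ and the selection implicit in \eqref{LiandPong} --- to be the technical heart of the argument; all the dynamical conclusions are then inherited verbatim from Theorem~\ref{LPthm1}.
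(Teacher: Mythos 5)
The survey itself only states this result, citing \cite[Theorem 5]{LP}, so the comparison is with Li and Pong's proof; your proposal follows essentially their route. The reduction is exactly right: $f=\tfrac12 d_A^2$, $g=\iota_B$, $L=1$, $\nu=0$, so the hypothesis of Theorem~\ref{LPthm1} becomes $(1+\eta)^2<\tfrac32$, i.e.\ $0<\eta<\sqrt{3/2}-1$, and everything except boundedness is then inherited, with $0\in\nabla f(\overline z)+\partial g(\overline z)$ reading $P_A(\overline z)-\overline z\in N_B(\overline z)$ as you say. Your contraction argument for compact $B$ is correct and is an elementary alternative to the source. The one place you stop short is the compact-$A$ case, where you assert rather than derive the key estimate; it does close, and in the way you anticipate. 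From the optimality condition of the $y$-update, $x_n-y_{n+1}=\eta\left(y_{n+1}-P_A(y_{n+1})\right)$, and since $x_{n+1}=x_n+z_{n+1}-y_{n+1}$ and $g(z_{n+1})=0$, completing the square gives the exact identity
\begin{equation*}
\mathfrak{D}_\eta(y_{n+1},z_{n+1},x_{n+1})=\frac{1-\eta}{2}\,d_A^2(y_{n+1})+\frac{1}{2\eta}\,\|x_{n+1}-y_{n+1}\|^2,
\end{equation*}
both terms nonnegative because $\eta<\sqrt{3/2}-1<1$. Monotonicity of $\mathfrak{D}_\eta$ from Theorem~\ref{LPthm1} then bounds $d_A(y_{n+1})$ and $\|x_{n+1}-y_{n+1}\|$; when $A$ is compact this bounds $(y_n)$ (since $\|y_{n+1}\|\leq d_A(y_{n+1})+\|P_A(y_{n+1})\|$), hence $(x_n)$ via $x_n=(1+\eta)y_{n+1}-\eta P_A(y_{n+1})$, hence $(z_n)$ via $z_{n+1}=x_{n+1}-x_n+y_{n+1}$ --- precisely the bookkeeping you describe (and your ``$d_A^2$ plus squared displacement'' form can also be reached by Young's inequality with a constant strictly between $\eta$ and $1$). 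A short extra line of algebra shows the same identity also disposes of the compact-$B$ case, which is why the source treats both cases at once, though your contraction shortcut is a perfectly good substitute. So: no step fails, the approach matches the original, and the only deficiency is that the decisive identity for the compact-$A$ case was left as a claim.
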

	Li and Pong also provided detailed results on the convergence rates. Andreas Themelis and Panos Patrinos have since published a follow up article \cite{themelis2018douglas} in which they relax some of the restrictions on the step size $\eta$, as well as providing a discussion of the connections with ADMM.
	
	In 2017, Christian Grussler and Giselsson \cite{grussler2017local} analysed the specific case of minimizing $f+g$ with both forward-backward splitting and the Douglas--Rachford operator $T_{\partial f,\partial g}$ where $g$ is convex and
	\begin{equation*}
	f:M \mapsto k(\|M\|)+ \iota_{{\rm rank}(M) \leq r}(M)
	\end{equation*}
	is nonconvex where $k(\cdot)$ is increasing and convex, $\|\cdot \|$ is a unitarily invariant norm, and $\iota_{{\rm rank}(M) \leq r}$ is the indicator function for matrices that have at most rank $r$. They provided conditions under which ${\rm prox}_{f}$ and ${\rm prox}_{f^{**}}$ coincide, constructing a framework under which they showed local convergence when solutions to the convex problem of minimizing $f^{**}+ g$ coincide with solutions to the nonconvex problem of minimizing $f+g$.
	
	\section{Summary}\label{summary}
	
	The goal of this survey has been to illuminate the history, motivations and robustness of DR in each of the broad settings wherein it has been considered. Much more could be said, and certainly much more will be. As noted by Glowinski, Osher, and Yin in the preface of their new book on the subject, new applications of splitting methods are being introduced almost daily \cite{glowinski2017splitting}. 
	
	\subsection{Future avenues of inquiry}
	
	These directions include the continued analysis of the Arag\'{o}n Artacho-Campoy method in the convex setting, wavelet discovery in the nonconvex setting, nonconvex minimization through the framework of Li and Pong, and the analysis of convergence rates under general parameters in all of these. We choose to state here two problems in the nonconvex setting---both suggested by Veit Elser---which have received little attention despite their particularly intriguing nature.
	
	\subsubsection{Continuous time variant}
	
	\begin{figure}[ht]	
		\begin{center}
			\includegraphics[height=.25\textheight]{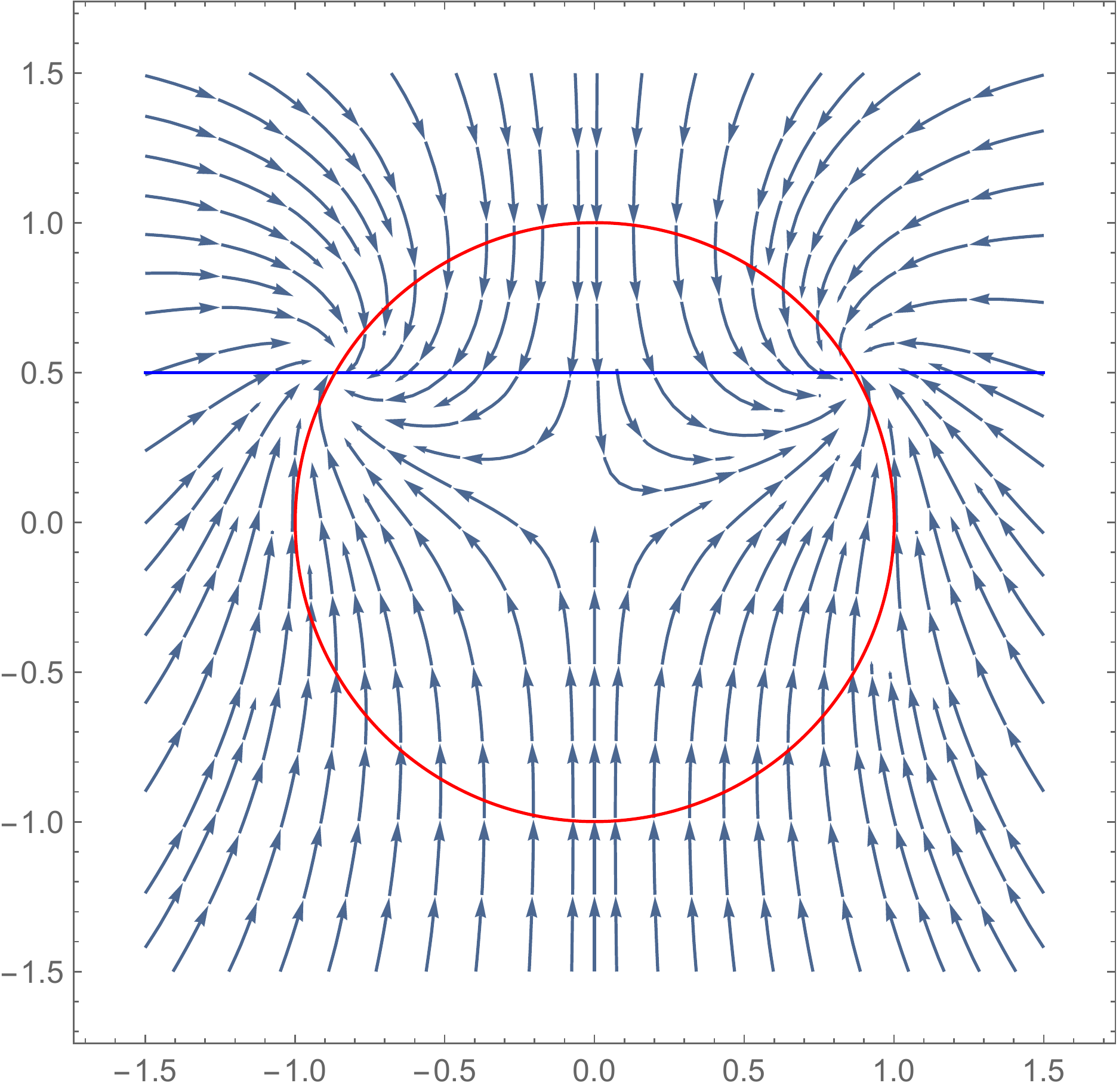}\quad \includegraphics[height=.25\textheight]{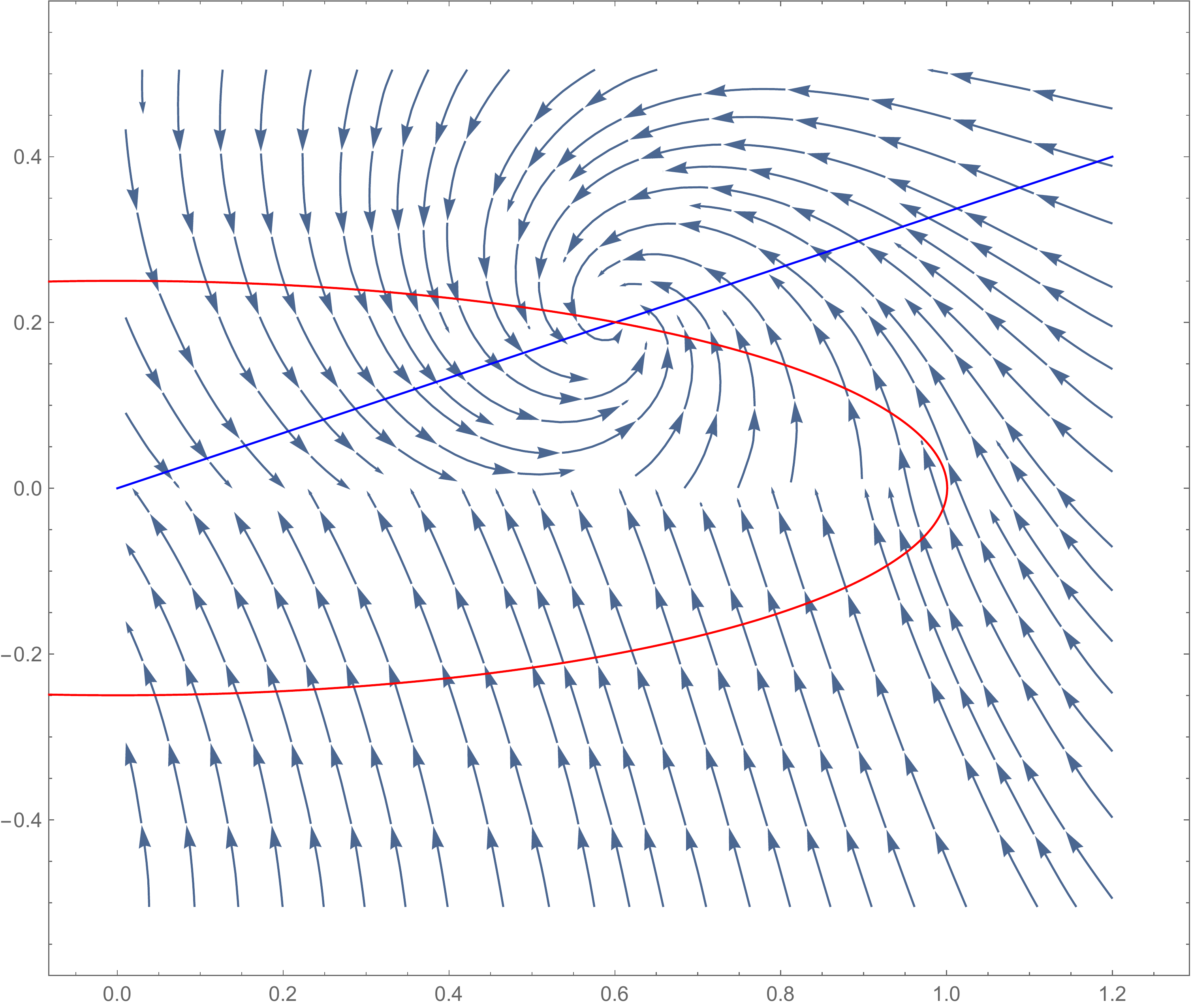}
		\end{center}
		\caption{The flowfield \eqref{DR_DE} with a circle/line (left) and ellipse/line (right). Images courtesy of Veit Elser.}\label{fig:circleflow}
	\end{figure}
	
	For the case of a circle and line, Borwein and Sims \cite{BS} considered the ``continuous time'' version of the algorithm whose flow field is at left in Figure~\ref{fig:circleflow} and corresponds to the solution of the differential equation
	\begin{equation*}\label{DR_DE}
	\frac{dx}{dt}=T(x) \quad \text{when} \; \lambda \rightarrow 0^+ .
	\end{equation*} 
	Veit Elser has suggested analysing the continuous time variant in the more general setting of ellipses and plane curves. Elser provided flow field images for a curve and integer lattice in \cite{elser2017matrix}, and he has generously furnished the images in Figure~\ref{fig:circleflow}.
	
	\subsubsection{Complexity Theory}\label{complexity_theory}
	
	Elser hypothesizes that, for Latin square problems, higher dimensionality is associated with greater robustness for the algorithm. The idea is that as the complexity of the problem grows, the singular regions---of chaotic or periodic behaviour---account for a smaller share of the total space. For most starting points, then, the iterates tends to explore the space without becoming stuck, as in Figure~\ref{fig:DRvsAPleft}, until eventually they fall into the basin of attraction for a feasibility point. Evidence abounds, as in \cite{elser2017benchmark,elser2017matrix,elser2018complexity}. Can the behaviour of DR and similar methods under complexity be rigorously catalogued through experimental analysis?
	
	\subsection{Conclusion}
	
	The role of DR in the convex setting is both well-known and celebrated. More novel and striking is its success in the nonconvex setting. Jonathan Borwein described DR as an ``out-of-the-box solver,'' whose robustness for a given nonconvex problem cannot be simply explained by its having been originally designed with that specific problem in mind. While the exact formulation for an embedding of a problem in $\mathbb{R}^n$---for example, the stochastic representation of a sudoku puzzle or the number of gadgets used in \cite{AC}---may affect performance, DR fundamentally requires very little: if one can compute the projections, one can use the solver. Perhaps this is why its performance consistently surprises those who study or use it. One thing is certain: the complexity of the behaviour is astounding, and much of the space remains to be explored.
	
	\subsection*{Acknowledgements}
	
	We are particularly grateful to Heinz Bauschke and Veit Elser, whose expertise on DR has been instrumental in our reconstruction of its history. We also extend our thanks to Pontus Giselsson, whose careful reading and helpful suggestions helped to improve the manuscript.

	\bibliographystyle{plain}
	\bibliography{bibliography_updated}
	
	\section{Appendix: ADMM and Douglas--Rachford}\label{appendix:ADMM}
	
	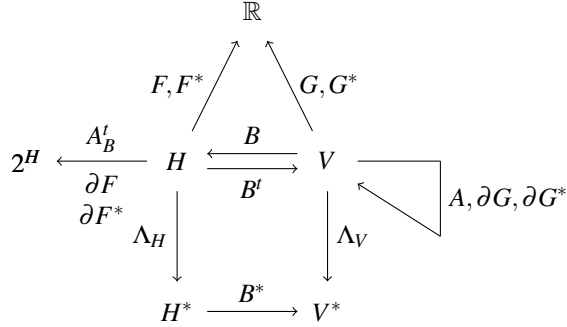
\begin{figure}
		\begin{center}
			\begin{tikzpicture}[scale=2.0]
			\node [] at (0,0) {$H^*$};
			\node [] at (0,1) {$H$};
			\node [] at (-1,1) {$2^H$};
			
			\node [] at (1,0) {$V^*$};
			\node [] at (1,1) {$V$};
			\node [] at (-1,1) {$2^H$};
			\node [] at (0.5,2) {$\mathbb{R}$};
			
			\draw [<-] (0,.2) -- (0,.8);
			\node [left] at (0,.5) {$\Lambda_H$};
			\draw [<-] (1,.2) -- (1,.8);
			\node [right] at (1,.5) {$\Lambda_V$};
			\draw [->] (.2,0) -- (0.8,0);
			\node [above] at (.5,0) {$B^*$};
			\draw [<-] (.2,1.05) -- (0.8,1.05);
			\node [above] at (.5,1.05) {$B$};
			\draw [->] (.2,.95) -- (0.8,.95);
			\node [below] at (.5,.95) {$B^t$};
			\draw [->] (-.2,1) -- (-0.8,1);
			\node [above] at (-.5,1) {$A_B^t$};
			\node [below] at (-.5,1) {$\begin{array}{c}\partial F\\
				\partial F^* \end{array}$};
			
			%to R
			\draw [->] (0.1,1.2) -- (0.4,1.8);
			\node [left] at (.25,1.5) {$F,F^*$};
			\draw [->] (0.9,1.2) -- (0.6,1.8);
			\node [right] at (.75,1.5) {$G,G^*$};
			
			%V circle
			\draw [] (1.2,1) -- (1.75,1);
			\draw [] (1.75,1) -- (1.75,.5);
			\draw [->] (1.75,0.5) -- (1.2,.85);
			\node [right] at (1.75,.75) {$A, \partial G,\partial G^*$};
			\end{tikzpicture}
		\end{center}
		\caption{Function diagram for Gabay's exposition.}\label{fig:Gabay}
	\end{figure}
	
	Throughout this section, the function diagram in Figure~\ref{fig:Gabay} is a useful reference. In particular, it should be noted that Gabay defined the conjugates $F^*:H\rightarrow \mathbb{R}$ and $G^*:V \rightarrow \mathbb{R}$ on the primal spaces.
	
	In 1983 \cite{Gabay}, Daniel Gabay considered the application of \eqref{LM} with $\B := \partial F^* = (\partial F)^{-1}$ for $F:H \rightarrow ]0,\infty]$ a proper convex lsc function, $\A := A_B^t :H\rightarrow 2^H$ by $$A_B^t(\mu) = \{q \in H | \exists v \in V\; \text{such that}\;  q=-Bv,\; -B^t\mu \in A(v)\},$$
	for a maximally monotone operator $A$, and $B: V \rightarrow H$ is a continuous linear operator with adjoint $B^*:H^* \rightarrow V^*$
	\begin{align*}
	\text{where}\;&\begin{cases}
	\langle \Lambda_V u, v\rangle_{V^* \times V} = \langle u,v \rangle_V \;\forall\; u,v \in V \;\;\text{with}\;\; \Lambda_V u \in V^*\\
	\langle \Lambda_H p, q\rangle_{H^* \times H} = \langle p,q \rangle_H\; \forall\; p,q \in H \;\;\text{with}\;\; \Lambda_H p \in H^*
	\end{cases};\\
	\text{and}\quad & B^t: H \rightarrow V \;\text{by}\; B^t:= \Lambda_V^{-1} \circ B^* \circ \Lambda_H .
	\end{align*}
	The motivating variational inequality problem is to find $u \in V$ such that
	\begin{equation}\label{Gabay_var_ineq}
	\exists w \in A(u)\;\;\text{where}\;\;(\forall v \in V) \;\; \langle w, v-u\rangle_V+F(Bv)-F(Bu) \geq 0.
	\end{equation}
	When $A=\partial G$ for $G:V \rightarrow ]-\infty,\infty]$ a convex, proper, lsc function, the variational inequality \eqref{Gabay_var_ineq} is just
	\begin{equation}\label{Gabay_optim}
	\mathbf{p}:=\underset{v \in V}{\inf} \{F(Bv)+G(v)\}.
	\end{equation}	
	When $A$ is coercive or $B^t B$ is an isomorphism of $V$, then $$J_{A_B^t}^\lambda (y) = y+\lambda B(A+\lambda B^t B)^{-1}(-B^t y).$$ Gabay showed that \eqref{LM} then becomes:
	\begin{align}
	\textbf{Step 0}&\quad \text{choose}\;\;\omega_0 \;\; \text{to be an approximate solution of the problem:}\nonumber \\
	&\quad \text{Find}\;\omega\;\text{such that}\;0 \in (A_B^t+\partial F^*)(\omega)\nonumber \\
	\textbf{Step 1}&\quad \text{choose}\;\; x_0,p_0 \;\; \text{such that}\quad p_0 \in \partial F^*(\omega_0),\; t_0 = \omega_0+\lambda p_0  \nonumber \\
	&\quad (\text{This ensures}\quad \omega_0 = J_{\partial F^*}^r(x_0))\nonumber \\
	\textbf{Step 2}&\quad
	\begin{cases}
	u_{n+1} & := (A+\lambda B^tB)^{-1}(\lambda B^t p_n - B^t \omega_n)\\
	p_{n+1} &:= (\partial F+\lambda \Id)^{-1} (\omega_n + \lambda B u_{n+1})\\
	\omega_{n+1} &:= \omega_n + \lambda(B u_{n+1}-p_{n+1})\\
	x_{n+1} &:= \omega_n + \lambda Bu_{n+1} 
	\end{cases}\label{ALG2}
	\end{align}
	In this case, $(\omega_n)_{n \in \mathbb{N}}$ is the sequence of multipliers, and $\omega_n := J_{\partial F^*}^\lambda (x_n)$ is the shadow sequence iterate corresponding to the $n$th iterate of the Douglas--Rachford sequence $(x_n)_{n \in \mathbb{N}}$. In terms of Figure~\ref{fig:APandDRright}, if we take  $\lambda=1$, $B=\Id$, $F^* = N_A$, and $A_B^t = N_B$, then, in \eqref{ALG2}, $x_n = x_n$, $\omega_n = P_A x_n$, $p_n = (x_n - P_A x_n)$, and $u_{n+1} = (P_B R_A x_n - R_A x_n)$.
	
	Gabay rewrites \eqref{ALG2} as in terms of the sequences $u_n,p_n,\omega_n$:
	\begin{align}
	\textbf{Step 0}&\quad \text{Find}\; u_{n+1}\in V \text{satisfying the variational inequality:} \;\; \exists w_{n+1} \in A(u_{n+1})\; \nonumber \\
	&\quad \text{such that}\; (\forall v \in V) \langle w_{n+1},v\rangle_V + \langle \omega_n - \lambda p_n + \lambda Bu_{n+1}, Bv\rangle_H = 0 \nonumber \\
	\textbf{Step 1} &\quad \text{Find}\; p_{n+1}\;\text{which solves the minimization problem:} \nonumber\\
	&\quad F(p_{n+1})-F(q)-\langle \omega_n,p_{n+1}-q \rangle_H + \frac{\lambda}{2}\|Bu_{n+1}-p_{n+1}\|_H^2 - \frac{\lambda}{2}\|Bu_{n+1}-q \|_H^2 \leq 0 \nonumber\\
	\textbf{Step 2}&\quad \text{Update multiplier by}\quad \omega_{n+1} \leftarrow \omega_n + \lambda(Bu_{n+1}-p_{n+1}).\nonumber
	\end{align}
	Gabay highlights that this is a variant of Uzawa's algorithm \cite{uzawa} for the augmented Lagrangian 
	\begin{equation*}
	\mathfrak{L}_r (v,q,u) = F(q)+G(v)+\langle \mu, Bv-q \rangle_H + \frac{\lambda}{2}\|Bv-q \|_H^2
	\end{equation*}
	for solving the optimization problem \eqref{Gabay_optim}. When $A=\partial G$, under qualification conditions, $A_B^t = \partial (G^* \circ (-B^t))$ and so
	\begin{equation}\label{Gabay_dual}
	\mathbf{d}:=\underset{\mu \in H}{\inf} \{G^*(-B^t\mu)+F^*(\mu)\}
	\end{equation}	
	is the dual value associated with the primal value \eqref{Gabay_optim}. See, for example, \cite[Theorem 3.3.5]{BL}. Thus the Lagrangian method of Uzawa applied to finding $\mathbf{p}$ \eqref{Gabay_optim} is equivalent to DR applied to finding $\mathbf{d}$ \eqref{Gabay_dual}.

	%Use this for biblatex
	%\printbibliography
	
\end{document}